\documentclass[12pt]{article}

\usepackage{a4wide,epsfig}
\usepackage{amsfonts,amsmath,mathtools,amssymb}
\usepackage[utf8]{inputenc}   
\usepackage[T1]{fontenc}
\usepackage{float}
\usepackage{booktabs}
\usepackage{xcolor}
\usepackage{subcaption}
\usepackage{graphicx}
\usepackage[subpreambles=true]{standalone}

\definecolor{orange}{rgb}{1,0.4,0}
\definecolor{green}{rgb}{0,0.4,0}

\newtheorem{theorem}{Theorem}[section]

\newtheorem{lemma}[theorem]{Lemma}

\newtheorem{remark}{Remark}[section]
\newtheorem{assumption}[theorem]{Assumption}
\newcommand{\proof} [1]
   { \noindent {\bf Proof.} #1 \hfill\rule{0.5em}{1.2ex} \par\medskip}

\newcommand{\calS}{\mathcal{S}}
\newcommand{\R}{\mathbb{R}}

\newcommand{\Dual}[1]{\left[#1 \right]^\prime}

\newcommand{\abs}[1]{\left\lvert{#1}\right\rvert}

\newcommand{\norm}[1]{{\left\lVert{#1}\right\rVert}}

\newcommand{\skpr}[1]{\langle #1 \rangle}

\newcommand{\dual}[2]{\left\langle #1, #2 \right\rangle}
\newcommand{\inner}[2]{\left( #1\,,\,#2\right)}

\newcommand{\OB}{\operatorname{\mathsf{B}}}
\newcommand{\OR}{\operatorname{\mathsf{R}}}
\newcommand{\OP}{\operatorname{\mathsf{P}}}
\newcommand{\OV}{\operatorname{\mathsf{V}}}
\newcommand{\OK}{\operatorname{\mathsf{K}}}
\newcommand{\OI}{\operatorname{\mathsf{I}}}

\newcommand{\OQ}{\operatorname{\mathsf{Q}}}
\newcommand{\OS}{\operatorname{\mathsf{S}}}

\newcommand{\OG}{\operatorname{\mathsf{G}}}

\newcommand{\bb}{\mathsf{b}}

\newcommand{\OOM}{\operatorname{\boldsymbol{\mathsf{M}}}}
\newcommand{\OOB}{\operatorname{\boldsymbol{\mathsf{B}}}}
\newcommand{\OOA}{\operatorname{\boldsymbol{\mathsf{A}}}}

\newcommand{\npi}[1]{}

\newcommand{\whh}{\widehat h}

\numberwithin{equation}{section} 

\begin{document}

\setcounter{page}{1}

\title{Stability analysis of boundary element methods for the wave equation: 
The weakly singular operator in $1d$}
\author{Daniel~Hoonhout$^1$, Richard~L\"oscher$^2$, \\[1mm]
  Carolina~Urz\'ua--Torres$^1$\thanks{The research leading to this
publication received funding from the Dutch Research
Council (NWO) under the NWO-Talent Programme Veni with the project number
VI.Veni.212.253.}}
\date{\small $^1$Delft Institute of Applied Mathematics, TU Delft,
  Mekelweg 4, 2628CD Delft, The Netherlands \\[1mm]
  $^2$Institut f\"ur Angewandte Mathematik, TU Graz, Steyrergasse 30, 8010 Graz, Austria}

\maketitle

\section{Introduction}
Recent work on time-domain boundary integral equations for the wave equation has 
shown that their variational formulations are continuous and inf-sup stable 
\cite{SUT22}, and only occasionally elliptic \cite{ADG09, SUZ21}. Therefore, the discrete
stability of their discretization via space-time boundary element methods (BEM) 
relies on finding pairs of discrete test and trial spaces such that the discrete 
inf-sup condition holds independently of mesh refinement.

In this paper, we use the framework of optimal test spaces \cite{DemkowiczGopalakrishnan:2011}
in order to show the stability and instability of pairs of discrete test and
trial spaces when considering the variational formulation related to the weakly
singular operator $\OV$ arising from the wave equation in $1d$. In particular, 
we focus on the classical choices of low order Lagrangian boundary elements. 
It is worth mentioning that a similar analysis can be carried out for the other 
related boundary integral operators. We refer the reader to \cite{HU:2025} for
recent developments regarding the double layer operator $\OK$.

With this purpose in mind, the structure of this paper is as follows: We first 
revisit the optimal test space framework in an abstract setting and recall 
the tools that we will use in our analysis. In Section~\ref{sec:Vin1d}, we 
briefly summarize the properties of $\OV$ for one spatial dimension. Then we 
combine these two ingredients in Section~\ref{ssec:optTestV} to characterize the
related optimal test spaces, such that in Section~\ref{sec:mainResults} we 
present our main stability results, subject to a constraint on the mesh.
Section~\ref{sec:numerics} is devoted to presenting numerical examples to
explore the sharpness of our predictions. Finally, in Section 5, we will
extend the analysis to show stability even without the mesh constraint.

\section{Abstract Setting}\label{sec:abstract-setting}
Let us consider the spaces $X$, $Y$ and their dual spaces $X'$ and $Y'$. For
given $f\in Y'$, we want to find the solution $u\in
X$ of the operator equation 
\begin{equation}\label{eq:operator-equation}
  \OB u = f \quad \text{ in }Y',
\end{equation}
where $\OB:X\to Y'$ is assumed to be a bounded linear operator, i.e., there
exists $C^B>0$ such that $\norm{\OB v}_{Y'}\leq C^B \norm{v}_X$ for all $v\in
X$. Note that \eqref{eq:operator-equation} is equivalent to the variational 
formulation: Find $u\in X$ such that
\begin{equation}\label{eq:VF-abstract}
  \bb(u,q):= \skpr{\OB u,q} = \skpr{f,q}\quad \forall q\in Y,
\end{equation}
where $\skpr{\cdot,\cdot}$ denotes the duality pairing as an extension of the $H$-
inner product and $\bb(\cdot,\cdot):X\times Y\to\mathbb{R}$ is the bilinear form
induced by the operator $\OB$. In order to ensure unique solvability of
\eqref{eq:VF-abstract}, we assume that $\OB\in \mathcal{L}_{\text{is}}(X,Y')$,
where
\begin{equation*}
 \mathcal{L}_{\text{is}}(X,Y'):=\{\OB:X\to Y' \text{ is an isomorphism}\}.
\end{equation*}
By the well-established theory for Petrov-Galerkin formulations
\cite{BabuskaAziz:1972,Brezzi:1974} this is the case if the following conditions
are met:
\begin{flalign}
\text{\bf(BN1)}&\qquad \exists \alpha_{\text{is}}^B >0 \ \forall v\in X:\,
\alpha_{\text{is}}^B\norm{v}_X\leq \sup_{0\neq q\in Y}\frac{\bb(v,q)}{\norm{
q}_Y},\label{ass:B bounded inverse}\\
\text{\bf(BN2)}&\qquad \forall q\in Y\setminus\{0\}\,\exists v_q\in X:\, \bb(
v_q,q) \neq 0. \label{ass:B surjective}
\end{flalign}
We will use that if $\OB\in \mathcal{L}_{\text{is}}
(X,Y')$, then \eqref{eq:VF-abstract} is equivalent to the minimization problem 
\begin{equation}
  u = \arg\min_{w\in X}\frac{1}{2}\norm{\OB w-f}_{Y'}^2.\label{eq:minProb}
\end{equation}  
Let us denote the inner product of $Y$ by $\inner{\cdot}{\cdot}_Y$.
We realize the dual norm in \eqref{eq:minProb} by considering the Riesz operator
$\OR_Y\in \mathcal{L}_{\text{is}}(Y,Y')$ defined as
\begin{equation*}
  \skpr{\OR_Yp,q} = \inner{p}{q}_Y, \quad \forall p,q\in Y,
\end{equation*}
which is by definition self-adjoint and elliptic and an isometry. Thus, 
we have that
\begin{equation*}
  \norm{f}_{Y'} = \norm{\OR_Y^{-1}f}_Y\quad \forall f\in Y'.
\end{equation*}
Then, the minimization problem becomes: find $u\in X$ as minimizer of 
\begin{equation*}
J(w):=\frac{1}{2}\norm{\OR_Y^{-1}(\OB w-f)}_{Y}^2 = \frac{1}{2}\skpr{\OR_Y^{-1}(\OB w-f),
\OB w-f}.
\end{equation*}
Since the functional $J$ is strictly convex, continuous and coercive, the
unique minimizer $\bar{u}$ satisfies the gradient equation
\begin{equation}\label{eq:gradient equation}
\delta J(\bar{u})(v) = \skpr{\OR_Y^{-1}(\OB\bar{u}-f),\OB v} =0 \quad \forall v\in X,
\end{equation}
which is equivalent to
\begin{equation}\label{eq:VF-LSabstract}
  \skpr{\OB\bar{u},\OR_Y^{-1}\OB v} = \skpr{f,\OR_Y^{-1}\OB v}\quad \forall v\in X.
\end{equation}

This perspective allows us to easily understand the following result.
\begin{lemma}\label{le:unique-solvability-conditions}
For each $f\in Y'$, the variational formulation \eqref{eq:VF-LSabstract} admits
a unique solution $u\in X$, satisfying
  \begin{equation*}
    \norm{u}_X\leq \frac{1}{\alpha_{\text{is}}^B}\norm{f}_{Y'}.
  \end{equation*} 
\end{lemma}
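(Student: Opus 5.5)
We treat \eqref{eq:VF-LSabstract} as a symmetric Galerkin problem on $X$ and apply Lax--Milgram. Define
\begin{equation*}
  \mathsf{a}(w,v) := \skpr{\OB w,\OR_Y^{-1}\OB v} = \inner{\OR_Y^{-1}\OB w}{\OR_Y^{-1}\OB v}_Y, \qquad w,v\in X .
\end{equation*}
The second expression follows from the definition of the Riesz operator. It shows that $\mathsf{a}$ is symmetric, and that $\mathsf{a}(v,v)=\norm{\OR_Y^{-1}\OB v}_Y^2=\norm{\OB v}_{Y'}^2$ because $\OR_Y$ is an isometry. The right-hand side $\ell(v):=\skpr{f,\OR_Y^{-1}\OB v}$ is a bounded linear functional on $X$, since
\begin{equation*}
  |\ell(v)|\le \norm{f}_{Y'}\norm{\OR_Y^{-1}\OB v}_Y\le C^B\norm{f}_{Y'}\norm{v}_X .
\end{equation*}
Continuity of $\mathsf{a}$ follows in the same way from Cauchy--Schwarz and the bound $C^B$.

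The main step is coercivity, and for this we use (BN1). Because $\norm{\OB v}_{Y'}=\sup_{0\ne q\in Y}\bb(v,q)/\norm{q}_Y$, (BN1) states exactly that $\norm{\OB v}_{Y'}\ge\alpha_{\text{is}}^B\norm{v}_X$. Hence
\begin{equation*}
  \mathsf{a}(v,v)\ge(\alpha_{\text{is}}^B)^2\norm{v}_X^2 ,
\end{equation*}
and Lax--Milgram gives a unique solution $u\in X$. Condition (BN2) is not needed for this existence and uniqueness; it only ensures that $u$ also solves \eqref{eq:VF-abstract}, because $\OB$ is then surjective.

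The remaining point is the sharp constant. The plain Lax--Milgram estimate yields $\norm{u}_X\le C^B\norm{f}_{Y'}/(\alpha_{\text{is}}^B)^2$, which is weaker than the claim. To get the stated bound, test \eqref{eq:VF-LSabstract} with $v=u$ and set $z:=\OR_Y^{-1}\OB u$. This gives
\begin{equation*}
  \norm{z}_Y^2=\skpr{f,z}\le\norm{f}_{Y'}\norm{z}_Y ,
\end{equation*}
so $\norm{\OB u}_{Y'}=\norm{z}_Y\le\norm{f}_{Y'}$. Combining this with (BN1), $\alpha_{\text{is}}^B\norm{u}_X\le\norm{\OB u}_{Y'}\le\norm{f}_{Y'}$, which is the asserted estimate.

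Equivalently, one can argue via the minimization problem \eqref{eq:minProb}: $J$ is strictly convex and coercive by (BN1), so it has a unique minimizer $\bar u$, characterized by \eqref{eq:gradient equation}, which is the same as \eqref{eq:VF-LSabstract}. This route gives the same bound.
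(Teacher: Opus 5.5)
Your proof is correct, but it takes a different route from the paper's.

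\textbf{The paper's route.} It gets unique solvability by observing that $\calS=\OB^\ast\OR_Y^{-1}\OB$ is a composition of isomorphisms. This uses $\OB\in\mathcal{L}_{\text{is}}(X,Y')$, i.e.\ both \textbf{(BN1)} and \textbf{(BN2)}. For the bound, it uses the surjectivity of $\OR_Y^{-1}\OB:X\to Y$ to rewrite $\sup_{q\in Y}\bb(u,q)/\norm{q}_Y$ as a supremum over $v\in X$. It then replaces $\OB u$ by $f$ via the equation and undoes the reparametrization. This chain in fact gives the identity $\norm{\OB u}_{Y'}=\norm{f}_{Y'}$. That reflects the fact that, once $\OB$ is an isomorphism, \eqref{eq:VF-LSabstract} is just $\OB u=f$.

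\textbf{Your route.} You apply Lax--Milgram with the coercivity $\skpr{\OB v,\OR_Y^{-1}\OB v}=\norm{\OB v}_{Y'}^2\geq(\alpha_{\text{is}}^B)^2\norm{v}_X^2$. You then obtain the sharp constant by testing with $u$ itself, which gives $\norm{\OB u}_{Y'}\leq\norm{f}_{Y'}$ and then applying \textbf{(BN1)}.

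\textbf{What each buys.} Your argument needs only \textbf{(BN1)} and boundedness of $\OB$. It therefore also covers the genuine least-squares situation $f\notin\OB(X)$. It also carries over verbatim to any closed subspace $X_h\subset X$, giving $\norm{u_h}_X\leq\norm{f}_{Y'}/\alpha_{\text{is}}^B$ for the discrete problem \eqref{eq:DVF-LSabstract-optimal}. The paper's supremum reparametrization does not survive that restriction, since $\OR_Y^{-1}\OB(X_h)\neq Y$. In exchange, the paper's route yields the exact identity $\norm{\OB u}_{Y'}=\norm{f}_{Y'}$ and ties directly into the equivalence with \eqref{eq:VF-abstract}. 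You recover that equivalence separately via \textbf{(BN2)}, and you correctly identify that this is the only place it is needed.
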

\proof{
Let $\OB^\ast:Y\to X'$ denote the adjoint of $\OB$. The operator
$$\calS:=\OB^\ast \OR_Y^{-1}\OB :X\to X'$$
is by definition self-adjoint. Moreover, since $\OB\in\mathcal{L}_{\text{is}}(X,
Y')$ and $\OR_Y\in\mathcal{L}_{\text{is}}(Y,Y')$, we have that $\calS\in\mathcal{
L}_{\text{is}}(X,X')$, which ensures unique solvability. Using \textbf{(BN1)}
and $\OR_Y^{-1}\OB \in\mathcal{L}_{\text{is}}(X,Y)$, the bound is easily obtained
as follows
\begin{align*}
\alpha_{\text{is}}^B\norm{u}_X &\leq \sup_{0\neq q\in Y}\frac{\bb(u,q)}{\norm{q
}_Y} =
\sup_{0\neq v\in X}\frac{\skpr{\OB u,\OR_Y^{-1}\OB v}}{\norm{\OR_Y^{-1}\OB v}_Y}\\
&= \sup_{0\neq v\in X} \frac{\skpr{f,\OR_Y^{-1}\OB v}}{\norm{\OR_Y^{-1}\OB v}_Y}
= \sup_{0\neq q\in Y}\frac{\skpr{f,q}}{\norm{q}_Y} = \norm{f}_{Y'}.
  \end{align*} 
}

\begin{remark}
As $\OR_Y^{-1}\OB \in \mathcal{L}_{\text{is}}(X,Y)$, each $q\in Y$ can be
represented as $q = \OR_Y^{-1}\OB  v$ for a unique $v\in X$. This immediately
shows that \eqref{eq:VF-abstract} and \eqref{eq:VF-LSabstract} are equivalent.
In particular, using the transformed trial space $Y = \OR_Y^{-1}\OB (X)$ as
test space leads to an elliptic formulation. This will be the key idea guiding
the design of discrete test spaces that lead to stability, as we will see in
the next subsection.
\end{remark}

\subsection{The optimal and the practical test space}
This subsection summarizes an abstract approach for constructing stable pairs of
trial and test spaces, closely related to discontinuous Petrov--Galerkin (DPG)
methods developed by Demkowicz and Gopalakrishnan, see \cite{DeG25} and
references therein. In contrast to the classical DPG setting, the framework here
is restricted to \emph{conforming discrete trial and test spaces}, avoiding the
use of broken spaces. In order to make this
paper self contained, the main steps are outlined and the analysis is adapted
to the present setting and notation.

Let $X_h\subset X$ be a conforming, finite dimensional trial space. Then, the
discrete variational formulation of \eqref{eq:VF-LSabstract} is: Find $u_h\in
X_h$ such that
\begin{equation}\label{eq:DVF-LSabstract-optimal}
  \skpr{\OB u_h,\OR_Y^{-1}\OB v_h} = \skpr{f,\OR_Y^{-1}\OB v_h}\quad \forall
  v_h\in X_h.
\end{equation}
Unique solvability follows by the Lemma of Lax--Milgram as in the continuous 
case, since the operator $\calS\in\mathcal{L}_{\text{is}}(X,X')$ is elliptic. In
particular, when considering the \emph{optimal} test space
\begin{equation*}
  Y_h^{\text{opt}} := \OR_Y^{-1}\OB (X_h)\subset Y,
\end{equation*}
we ensure a unique solution of the discrete variational formulation: Find
$u_h\in X_h$ such that
\begin{equation}\label{eq:VF optimal test spaces}
  \bb(u_h,q_h) = \skpr{f,q_h}\quad \forall q_h\in Y_h^{\text{opt}}.
\end{equation}

Moreover, one can show quasi-optimality, as summarized in the following
Theorem.
\begin{theorem}\label{thm:optimal setting main results}
For any $f\in Y'$ the variational formulation \eqref{eq:VF optimal test spaces}
admits a unique solution $u_h\in X_h$, satisfying
\begin{equation*}
  \norm{u-u_h}_{X} \leq \frac{C^B}{\alpha_{\text{is}}^B}\inf_{v_h\in X_h}
  \norm{u-v_h}_X,
\end{equation*}
where $u\in X$ denotes the unique solution of \eqref{eq:VF-abstract}.
\end{theorem}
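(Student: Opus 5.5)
The plan is to reduce the statement to a Galerkin-type argument for the self-adjoint operator $\calS=\OB^\ast\OR_Y^{-1}\OB$, measured in the energy norm it induces.

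\textbf{Step 1: existence, uniqueness and an energy norm.} By the definition of $Y_h^{\text{opt}}=\OR_Y^{-1}\OB(X_h)$, every $q_h\in Y_h^{\text{opt}}$ can be written as $q_h=\OR_Y^{-1}\OB v_h$ with $v_h\in X_h$. Hence \eqref{eq:VF optimal test spaces} is exactly \eqref{eq:DVF-LSabstract-optimal}. The bilinear form
\[
\skpr{\calS w,v}=\inner{\OR_Y^{-1}\OB w}{\OR_Y^{-1}\OB v}_Y
\]
is symmetric. It is also elliptic on $X$, since $\skpr{\calS v,v}=\norm{\OB v}_{Y'}^2\geq(\alpha_{\text{is}}^B)^2\norm{v}_X^2$. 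This lower bound follows from \textbf{(BN1)}, because $\sup_{q}\bb(v,q)/\norm{q}_Y=\norm{\OB v}_{Y'}$. Lax--Milgram on $X_h$ then gives a unique $u_h\in X_h$.

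\textbf{Step 2: Galerkin orthogonality and best approximation.} The exact solution $u$ of \eqref{eq:VF-abstract} satisfies $\OB u=f$. Therefore $\skpr{\OB(u-u_h),\OR_Y^{-1}\OB v_h}=0$ for all $v_h\in X_h$, which is Galerkin orthogonality in the inner product $\skpr{\calS\cdot,\cdot}$. Define the energy norm
\[
\norm{w}_E:=\norm{\OR_Y^{-1}\OB w}_Y=\norm{\OB w}_{Y'}.
\]
Because $u_h$ is the $E$-orthogonal projection of $u$ onto $X_h$ (no Céa constant arises from symmetry), it is the best approximation:
\[
\norm{u-u_h}_E=\inf_{v_h\in X_h}\norm{u-v_h}_E .
\]

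\textbf{Step 3: norm equivalence.} Continuity of $\OB$ gives $\norm{w}_E\le C^B\norm{w}_X$, and \textbf{(BN1)} gives $\alpha_{\text{is}}^B\norm{w}_X\le\norm{w}_E$. Combining these with Step 2 yields
\[
\alpha_{\text{is}}^B\norm{u-u_h}_X\le\norm{u-u_h}_E=\inf_{v_h\in X_h}\norm{u-v_h}_E\le C^B\inf_{v_h\in X_h}\norm{u-v_h}_X ,
\]
which is the claimed estimate.

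The main obstacle is getting the constant $C^B/\alpha_{\text{is}}^B$ rather than its square. A naive Céa argument in the $X$-norm, using ellipticity constant $(\alpha^B_{\text{is}})^2$ and continuity constant $(C^B)^2$, would give $(C^B/\alpha_{\text{is}}^B)^2$. This is avoided by working in the energy norm $\norm{\OB\cdot}_{Y'}$, where the discrete solution is an orthogonal projection, and converting to the $X$-norm only at the end.
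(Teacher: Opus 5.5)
Your proof is correct, but it takes a different route from the paper's.

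\textbf{The paper's route.} It works on the test side. For $v_h\in X_h$ it sets $\bar q_h=\OR_Y^{-1}\OB v_h$. Using \textbf{(BN1)} and the $Y$-orthogonal projection onto $Y_h^{\text{opt}}$, it shows that the discrete inf-sup condition on $X_h\times Y_h^{\text{opt}}$ holds with the same constant $\alpha_{\text{is}}^B$. It gets unique solvability from $\dim Y_h^{\text{opt}}=\dim X_h$. It then obtains the constant $C^B/\alpha_{\text{is}}^B$, rather than the cruder $1+C^B/\alpha_{\text{is}}^B$, by citing the Xu--Zikatanov identity $\norm{\OI-\OG_h}=\norm{\OG_h}$.

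\textbf{Your route.} You work on the trial side. With the optimal test space, the scheme is exactly the symmetric Galerkin method for $\calS$. So $u_h$ is the $\norm{\OB\,\cdot}_{Y'}$-orthogonal projection of $u$, i.e.\ the minimizer of the residual $\norm{f-\OB v_h}_{Y'}$ over $X_h$. The constant $C^B/\alpha_{\text{is}}^B$ then follows from the norm equivalence $\alpha_{\text{is}}^B\norm{\cdot}_X\le\norm{\OB\,\cdot}_{Y'}\le C^B\norm{\cdot}_X$. This is more elementary: it needs no external quasi-optimality result and no dimension count. It also gives the sharper intermediate fact of exact best approximation in the residual norm. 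The discrete inf-sup bound follows in one line by testing with $q_h=\OR_Y^{-1}\OB v_h$.

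\textbf{What the paper's route buys.} Your argument needs the lower bound $\alpha_{\text{is}}^B\norm{w}_X\le\norm{\OB w}_{Y'}$ for the error $w=u-u_h\notin X_h$, which relies on the exact Riesz inverse on all of $Y$. For the practical test space, the discrete residual norm $\sup_{q_h\in Y_h}\bb(w,q_h)/\norm{q_h}_Y$ is bounded below by $c_0^B\norm{w}_X$ only for $w\in X_h$. There the direct analogue of your argument yields only $1+C^B/c_0^B$. The inf-sup plus Xu--Zikatanov argument, by contrast, carries over verbatim and gives $C^B/c_0^B$ in Theorem~\ref{thm:practical-error-estimates}.
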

\proof{
Let $v_h\in X_h$ be arbitrary but fixed. By definition, there exists a unique
$\bar q_h = \OR_Y^{-1}\OB  v_h \in Y_h^{\text{opt}}$, i.e.,
\begin{equation*}
  \inner{\bar q_h}{q}_Y = \bb(v_h,q)\quad \forall q\in Y.
\end{equation*}
Combining this with the continuous inf-sup stability \textbf{(BN1)}, we get
\begin{equation*}
\alpha_{\text{is}}^B \norm{v_h}_X \leq \sup_{0\neq q\in Y} \frac{\bb(v_h,q)}
{\norm{q}_Y} = \sup_{0\neq q\in Y}\frac{\inner{\bar q_h}{q}_Y}{\norm{q}_Y} =
\sup_{0\neq q\in Y} \frac{\skpr{\OR_Y \bar q_h,q}}{\norm{q}_Y} = \norm{\OR_Y
\bar q_h}_{Y'} = \norm{\bar q_h}_Y.
\end{equation*}
Furthermore, using the $Y$-orthogonal projection $\OP_{Y_h^{\text{opt}}}
:Y\to Y_h^{\text{opt}}$, defined as
\begin{equation*}
\inner{\OP_{Y_h^{\text{opt}}}q}{q_h}_Y = \inner{q}{q_h}_Y\quad \forall q_h\in
Y_h^{\text{opt}},
\end{equation*}
we obtain by the stability of the projection 
\begin{align*}
\norm{\bar q_h}_Y &= \sup_{0\neq q\in Y}\frac{\inner{\bar q_h}{q}_Y}{\norm{q}_Y}\\ &=
\sup_{0\neq q\in Y}\frac{\inner{\bar q_h}{\OP_{Y_h^{\text{opt}}}q}_Y}{\norm{q}_Y}
\leq
\sup_{0\neq q\in Y}\frac{\inner{\bar q_h}{\OP_{Y_h^{\text{opt}}}q}_Y}{\norm{
\OP_{Y_h^{\text{opt}}}q}_Y} \leq \sup_{0\neq q_h\in Y_h^{\text{opt}}} \frac{
\inner{\bar q_h}{q_h}_Y}{\norm{q_h}_Y}
\end{align*}
and thus
\begin{equation*}
  \alpha_{\text{is}}^B \norm{v_h}_X \leq \norm{\bar q_h}_Y \leq  \sup_{0\neq
  q_h\in Y_h^{\text{opt}}}\frac{\bb(v_h,q_h)}{\norm{q_h}_Y}.
\end{equation*}
Therefore, the discrete inf-sup stability holds with the same constant as on
the continuous level. Since $\text{dim}(Y_h^{\text{opt}}) = \text{dim}(X_h),$
this is sufficient to guarantee unique solvability. The quasi best
approximation error estimate follows by standard arguments, see
\cite{XuZikatanov:2003}.
}

Unfortunately, the action of $\OR_Y^{-1}\OB $ cannot generally be computed
explicitly, since the inverse of $\OR_Y$ is usually not explicitly available.
Moreover, the space $Y_h^{\mathrm{opt}}$ need not be a subspace of any readily
constructible discrete space.

In order to compute a \emph{practical} test space, we will thus replace the 
operator $\OR_Y$ by some suitable approximation, as it is done in the framework
of DPG methods \cite{DeG25} or least squares methods \cite{MSS24}. Therefore,
let us first note that, when introducing $p= \OR_Y^{-1}(f-\OB u_h)\in Y$ the
variational formulation \eqref{eq:DVF-LSabstract-optimal} is equivalent to the 
saddle point formulation: Find $(p,u_h)\in Y\times X_h$ such that
\begin{equation}\label{eq:SPF-optimal}
\begin{aligned}
\hphantom{-\bb}\inner{p}{q}_Y &+& \bb(u_h,q) &=& \skpr{f,q}&\quad\forall q\in Y,\\
-\bb(v_h,p) &&             &=& 0         &\quad \forall v_h\in X_h.
\end{aligned}
\end{equation}

Now, choosing a conforming discrete space $Y_h\subset Y$, we consider the fully
discrete variational formulation: Find $(p_h,\tilde u_h)\in Y_h\times X_h$ such
that
\begin{equation}\label{eq:SPF-practical}
\begin{aligned}
\hphantom{-\bb} \inner{p_h}{q_h}_Y &+& \bb(\tilde u_h,q_h) &=& \skpr{f,q_h}&\quad
\forall q_h\in Y_h,\\
-\bb(v_h,p_h) &&           &=& 0         &\quad \forall v_h\in X_h.
\end{aligned}
\end{equation}
Substituting $p\in Y$ by $p_h\in Y_h$ corresponds to replacing $\OR_Y$ by a
discrete counterpart
\begin{equation*}
  \OR_{Y_h} := \OP_{Y_h'}\OR_Y\mathcal{E}_Y: Y_h\to Y_h',
\end{equation*}
where $\mathcal{E}_Y:Y_h\to Y$ denotes the embedding/extension operator, and
$\OP_{Y_h'}: Y'\to Y_h'$ the $Y'$-orthogonal projection operator.

Note that, in this setting, the condition $\dim(X_h)=\dim(Y_h)$ is not required
for unique solvability. This provides additional flexibility in the choice of
the test space $Y_h$.

In order to maintain stability when approximating $p\in Y$ by $p_h\in Y_h$, we
need to verify that the discrete inf-sup condition 
\begin{equation}\label{eq:discrete-inf-sup-B}
\exists c^B_{0}>0 \ \forall v_h\in X_h:\, c^B_{0}\norm{v_h}_{X}\leq \sup_{0\neq 
q_h\in Y_h}\frac{\bb(v_h,q_h)}{\norm{q_h}_Y},
\end{equation}
is still satisfied. We stress that this condition has to be checked independently
of the continuous inf-sup condition \textbf{(BN1)}, which we assumed to hold.
Although it is expected that \eqref{eq:discrete-inf-sup-B} will hold when $Y_h$
approximates $Y$ ``well enough'', one has to establish a uniform lower bound
$c^B_{0}>0$ independently of $\text{dim}(X_h)$ and $\text{dim}(Y_h)$ of the
trial and test spaces. For this purpose, we consider the \emph{practical} test
space
\begin{equation*}
  Y_h^{\text{prac}} = \OR_{Y_h}^{-1}\OB_h(X_h)\subset Y_h,
\end{equation*} 
where $\OB_h = \OP_{Y_h'}\OB\mathcal{E}_X:X_h\to Y_h'$ with the extension
operator $\mathcal{E}_X: X_h\to X$.

We note that if the discrete inf-sup condition \eqref{eq:discrete-inf-sup-B}
holds, then $\text{dim}(Y_h^{\text{prac}}) = \text{dim}(X_h)$ as $\OR_{Y_h}\in
\mathcal{L}_{\text{is}}(Y_h,Y_h')$ and $\O\OB_h:X_h\to Y_h'$ is injective.
Indeed, the following Lemma illustrates the applicability of the practical
test space.

\begin{lemma}
Let $f\in Y'$ and let \eqref{eq:discrete-inf-sup-B} hold true. Then the discrete
saddle point problem \eqref{eq:SPF-practical} admits a unique solution $(p_h,
\tilde{u}_h)\in Y_h\times X_h$. Moreover, \eqref{eq:SPF-practical} is equivalent
to the variational formulation: Find $\tilde{u}_h\in X_h$ such that
\begin{equation}\label{eq:DVF-practical}
\bb(\tilde u_h,\tilde q_h) = \skpr{f,\tilde q_h}\quad \forall \tilde
q_h\in Y_h^{\text{prac}}. 
\end{equation} 
\end{lemma}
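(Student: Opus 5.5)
The plan is to reduce the saddle point problem \eqref{eq:SPF-practical} to a Schur complement system on $X_h$, show this system is elliptic using \eqref{eq:discrete-inf-sup-B}, and then read off the equivalence with \eqref{eq:DVF-practical}.

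\textbf{Operator form and elimination of $p_h$.} Write \eqref{eq:SPF-practical} as
\[
\OR_{Y_h} p_h + \OB_h \tilde u_h = f_h \ \text{ in } Y_h', \qquad \OB_h^\ast p_h = 0 \ \text{ in } X_h',
\]
where $f_h := \OP_{Y_h'} f$, i.e.\ $f$ restricted to $Y_h$. Since $Y_h$ is finite dimensional and $\inner{\cdot}{\cdot}_Y$ restricted to $Y_h$ is an inner product, $\OR_{Y_h}$ is symmetric, elliptic and invertible on $Y_h$. The first equation then gives $p_h = \OR_{Y_h}^{-1}(f_h - \OB_h \tilde u_h)$. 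Substituting into the second equation yields the Schur complement system
\[
\calS_h \tilde u_h = \OB_h^\ast \OR_{Y_h}^{-1} f_h, \qquad \calS_h := \OB_h^\ast \OR_{Y_h}^{-1}\OB_h : X_h \to X_h'.
\]

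\textbf{Ellipticity of $\calS_h$ (the only nontrivial step).} Exactly as in the proof of Theorem~\ref{thm:optimal setting main results}, with $Y$ replaced by $Y_h$, the quantity $\skpr{\calS_h v_h, v_h}$ equals $\norm{\OR_{Y_h}^{-1}\OB_h v_h}_Y^2$. Because $\OR_{Y_h}$ is the Riesz map of $Y_h$, this norm coincides with $\norm{\OB_h v_h}_{Y_h'}$, giving
\[
\skpr{\calS_h v_h,v_h} = \norm{\OB_h v_h}_{Y_h'}^2 = \Big(\sup_{0\neq q_h\in Y_h}\frac{\bb(v_h,q_h)}{\norm{q_h}_Y}\Big)^2 \ge (c_0^B)^2\norm{v_h}_X^2
\]
by \eqref{eq:discrete-inf-sup-B}. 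Hence $\calS_h$ is elliptic, and by Lax--Milgram (or simply injectivity in finite dimensions) $\tilde u_h$ is uniquely determined. Then $p_h$ is uniquely determined by the elimination formula. Conversely, any solution of \eqref{eq:SPF-practical} must arise this way, which gives uniqueness for the pair $(p_h,\tilde u_h)$.

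\textbf{Equivalence with \eqref{eq:DVF-practical}.} Every $\tilde q_h\in Y_h^{\text{prac}}$ has the form $\tilde q_h = \OR_{Y_h}^{-1}\OB_h v_h$ for some $v_h\in X_h$. The Schur complement equation tested with $v_h$ reads
\[
\skpr{\OB_h\tilde u_h - f_h,\ \OR_{Y_h}^{-1}\OB_h v_h} = 0,
\]
which is exactly $\bb(\tilde u_h,\tilde q_h)=\skpr{f,\tilde q_h}$ for all $\tilde q_h \in Y_h^{\text{prac}}$. This uses that $\OP_{Y_h'}$ does not change pairings with elements of $Y_h$. The argument runs both ways, so \eqref{eq:DVF-practical} is equivalent to the Schur system and hence to \eqref{eq:SPF-practical}.

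Unique solvability of \eqref{eq:DVF-practical} itself then follows from the ellipticity step. Alternatively, it follows from $\dim Y_h^{\text{prac}}=\dim X_h$, noted before the lemma, together with injectivity.
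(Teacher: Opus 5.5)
Your proof is correct. The equivalence part is the paper's argument rewritten in operator form. The paper tests the second equation of \eqref{eq:SPF-practical} with $v_h$ and replaces $\bb(v_h,p_h)$ by $\inner{\tilde q_h}{p_h}_Y$, where $\tilde q_h$ is the Riesz representative of $\bb(v_h,\cdot)$ on $Y_h$. It then uses the first equation tested with $\tilde q_h$. Conversely, it defines $p_h$ as the Riesz representative on $Y_h$ of the residual $\skpr{f,\cdot}-\bb(\tilde u_h,\cdot)$. Your step of testing the Schur system with $v_h$ and moving $\OR_{Y_h}^{-1}$ to the other slot by symmetry is exactly the paper's $\inner{\tilde q_h}{p_h}_Y=\inner{p_h}{\tilde q_h}_Y$.

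Where you go beyond the paper is the ellipticity bound $\skpr{\calS_h v_h,v_h}=\norm{\OB_h v_h}_{Y_h'}^2\ge (c_0^B)^2\norm{v_h}_X^2$. The paper's proof establishes only the equivalence and does not argue the existence and uniqueness claimed in the lemma's first sentence. That claim has to be inferred from standard saddle point theory, or pieced together later from $\dim Y_h^{\text{prac}}=\dim X_h$ and \eqref{eq:supremum-equality}. Your Schur-complement route is the discrete analogue of the proof of Lemma~\ref{le:unique-solvability-conditions}, so it makes the lemma self-contained and shows where \eqref{eq:discrete-inf-sup-B} enters. The paper's version is slightly more elementary: it works only with $\inner{\cdot}{\cdot}_Y$ and $\bb$, without the bookkeeping of $\OP_{Y_h'}$ and $\OB_h^\ast$.
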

\proof{
By definition the practical test space is given by
\begin{equation*}
  Y_h^{\text{prac}} = \{\tilde q_h\in Y_h: \,\exists v_h\in X_h\, \text{ such
  that } \inner{\tilde q_h}{q_h}_Y = \bb(v_h,q_h)\quad \forall q_h\in Y_h \}.
\end{equation*}  
Then from \eqref{eq:SPF-practical}, we compute for any fixed $v_h\in X_h$
\begin{equation*}
  0 = \bb(v_h,p_h) = \inner{\tilde q_h}{p_h}_Y = \inner{p_h}{\tilde q_h}_Y=
  \skpr{f,\tilde q_h} - \bb(\tilde u_h,\tilde q_h),
\end{equation*}
or equivalently
\begin{equation*}
  \bb(\tilde u_h,\tilde q_h) = \skpr{f,\tilde q_h},\quad \forall \tilde q_h
  \in Y_h^{\text{prac}}.
\end{equation*} 
Conversely, if $\tilde{u}_h\in X_h$ is a solution of \eqref{eq:DVF-practical},
define $p_h\in Y_h$ as the unique solution of
\begin{equation*}
  (p_h,q_h)_Y = \skpr{f,q_h} - \bb(\tilde u_h,q_h),\quad \forall q_h\in Y_h.
\end{equation*}
Testing with $\tilde q_h\in Y_h^{\text{prac}}\subset Y_h$ shows that
\begin{equation*}
  (p_h,\tilde q_h)_Y = \skpr{f,\tilde q_h} - \bb(\tilde u_h,\tilde q_h) =0,
\end{equation*}
and again by the definition of the practical test space, for arbitrary $v_h\in
X_h$ we find $\tilde q_h\in Y_h^{\text{prac}}$ such that
\begin{equation*}
  \bb(v_h,p_h) = (\tilde q_h,p_h)_Y = (p_h,\tilde q_h)_Y =0,
\end{equation*}
showing that \eqref{eq:DVF-practical} implies \eqref{eq:SPF-practical}. 
}

\begin{remark}
\label{rmk:Equivalence}
The fact that both formulations are
equivalent allows us to choose \eqref{eq:DVF-practical} for the analysis and
to consider the saddle point system \eqref{eq:SPF-practical} for the practical
implementation.
\end{remark}
Next, we point out the relation between the optimal and the
practical test space.
\begin{lemma}
  If the discrete inf-sup condition \eqref{eq:discrete-inf-sup-B} holds, then  
  \begin{equation*}
    Y_h^{\text{prac}} = \OP_{Y_h}(Y_h^{\text{opt}}),
  \end{equation*}
  where $\OP_{Y_h}:Y\to Y_h$ is the $Y$-orthogonal projection, defined as
  \begin{equation*}
    \inner{\OP_{Y_h}q}{q_h}_Y = \inner{q}{q_h}_Y\quad \forall q_h\in Y_h.
  \end{equation*}
\end{lemma}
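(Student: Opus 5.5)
We prove the two inclusions by comparing the defining relations of the two test spaces, and get equality from a dimension count (or directly from the construction, which is a bijection). The basic observation is the characterization of $Y_h^{\text{prac}}$ from the proof of the previous lemma:
\begin{equation*}
Y_h^{\text{prac}} = \{\tilde q_h\in Y_h:\ \exists v_h\in X_h \text{ with } \inner{\tilde q_h}{q_h}_Y = \bb(v_h,q_h)\ \forall q_h\in Y_h\}.
\end{equation*}
Similarly, $Y_h^{\text{opt}}$ consists of all $\bar q_h = \OR_Y^{-1}\OB v_h$ with $v_h\in X_h$, i.e.\ all $\bar q_h\in Y$ such that $\inner{\bar q_h}{q}_Y = \bb(v_h,q)$ for all $q\in Y$.

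\emph{Step 1 ($\supseteq$).} Fix $v_h\in X_h$ and let $\bar q_h=\OR_Y^{-1}\OB v_h\in Y_h^{\text{opt}}$. Set $\tilde q_h:=\OP_{Y_h}\bar q_h\in Y_h$. For every $q_h\in Y_h$, the definition of the projection gives
\begin{equation*}
\inner{\tilde q_h}{q_h}_Y=\inner{\bar q_h}{q_h}_Y=\bb(v_h,q_h).
\end{equation*}
So $\tilde q_h$ satisfies the defining relation of $Y_h^{\text{prac}}$ with this same $v_h$. This shows $\OP_{Y_h}(Y_h^{\text{opt}})\subseteq Y_h^{\text{prac}}$.

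\emph{Step 2 ($\subseteq$).} Take $\tilde q_h\in Y_h^{\text{prac}}$ and let $v_h\in X_h$ be an associated element. The element $\OP_{Y_h}\OR_Y^{-1}\OB v_h$ lies in $Y_h$ and satisfies the same relation $\inner{\cdot}{q_h}_Y=\bb(v_h,q_h)$ for all $q_h\in Y_h$. Since $\inner{\cdot}{\cdot}_Y$ restricted to $Y_h$ is an inner product, this relation determines an element of $Y_h$ uniquely (it is $\OR_{Y_h}^{-1}\OB_h v_h$). Hence $\tilde q_h=\OP_{Y_h}\OR_Y^{-1}\OB v_h\in\OP_{Y_h}(Y_h^{\text{opt}})$.

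Thus the map $v_h\mapsto \OP_{Y_h}\OR_Y^{-1}\OB v_h$ parametrizes both sets simultaneously, so the identity $\OR_{Y_h}^{-1}\OB_h=\OP_{Y_h}\OR_Y^{-1}\OB$ on $X_h$ is really the whole content of the lemma. The discrete inf-sup condition \eqref{eq:discrete-inf-sup-B} is needed only for the accompanying dimension statement. It guarantees that this map is injective: if $\OP_{Y_h}\OR_Y^{-1}\OB v_h=0$, then $\bb(v_h,q_h)=0$ for all $q_h\in Y_h$, and \eqref{eq:discrete-inf-sup-B} forces $v_h=0$. Consequently both spaces have dimension $\dim X_h$, which is consistent with the remark preceding the previous lemma.

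The main obstacle is merely bookkeeping: we must make sure that the projection $\OP_{Y_h}$ onto $Y_h$ and the discrete Riesz map $\OR_{Y_h}=\OP_{Y_h'}\OR_Y\mathcal{E}_Y$ encode the same Galerkin relation. This reduces to checking that $\skpr{\OP_{Y_h'}g,q_h}=\skpr{g,q_h}$ for $q_h\in Y_h$, which holds for the dual projection viewed as restriction to $Y_h$. Once this identification is made, the argument above is complete.
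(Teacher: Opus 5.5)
Your proof is correct. Your Step~2 is the paper's argument. The paper takes $\tilde q_h\in Y_h^{\text{prac}}$ with associated $v_h$, identifies $\inner{\tilde q_h}{q_h}_Y=\bb(v_h,q_h)=\inner{\OP_{Y_h}\OR_Y^{-1}\OB v_h}{q_h}_Y$ for all $q_h\in Y_h$, and concludes $\tilde q_h=\OP_{Y_h}\OR_Y^{-1}\OB v_h$.

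The difference is the reverse inclusion. The paper does not check it directly but closes with a dimension count, $\dim Y_h^{\text{prac}}=\dim X_h=\dim Y_h^{\text{opt}}\ge\dim\OP_{Y_h}(Y_h^{\text{opt}})$. The discrete inf-sup condition \eqref{eq:discrete-inf-sup-B} enters only there, through $\dim Y_h^{\text{prac}}=\dim X_h$. Your Step~1 replaces the count with a one-line computation. This shows that the operator identity $\OR_{Y_h}^{-1}\OB_h=\OP_{Y_h}\OR_Y^{-1}\OB$ on $X_h$, and hence the set equality, holds with no hypothesis at all. The inf-sup condition matters only for the statement that both spaces have dimension $\dim X_h$, which is a slightly sharper picture than the paper's formulation gives.

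Your bookkeeping remark on $\OP_{Y_h'}$ is also sound if you read it literally as the $Y'$-orthogonal projection onto $\OR_Y(Y_h)$. In that case $\OP_{Y_h'}=\OR_Y\OP_{Y_h}\OR_Y^{-1}$, and so $\skpr{\OP_{Y_h'}g,q_h}=\skpr{g,q_h}$ for all $q_h\in Y_h$.
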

\proof{
Let $\tilde q_h\in Y_h^{\text{prac}} = \OR_{Y_h}^{-1}\OB_h(X_h)\subset Y_h$ be
arbitrary but fixed. Then there exists some $v_h\in X_h$ such that 
$\tilde q_h\in Y_h$ is the unique solution of 
\begin{equation*}
\inner{\tilde q_h}{q_h}_Y = \bb(v_h,q_h)\quad \text{ for all }
q_h\in Y_h.
\end{equation*}
Moreover, let $\tilde q\in Y_h^{\text{opt}}$ be the unique solution of  
\begin{equation*}
\inner{\tilde q}{q}_Y = \bb(v_h,q)\quad \text{for all }q\in Y.
\end{equation*}
Since $Y_h\subset Y$, we combine these two equations with the definition of
$\OP_{Y_h}$ to arrive to
\begin{align*}
\inner{\tilde q_h}{q_h}_Y = \bb(v_h,q_h) = \inner{\tilde q}{q_h}_Y =
\inner{\OP_{Y_h}\tilde q}{q_h}_Y
\end{align*}
Hence, $\tilde q_h = \OP_{Y_h}\tilde q$, i.e., $Y_h^{\text{prac}}\subseteq
\OP_{Y_h}(Y_h^{\text{opt}})$. Since $\text{dim}(Y_h^{\text{prac}}) = \text{dim}
(X_h)=\text{dim}(Y_h^{\text{opt}})$, the dimensions are equal and we obtain the
desired equality.    
}

\subsection{Error estimates}
We now proceed to prove the quasi-optimality of the scheme. When subtracting
\eqref{eq:DVF-practical} from \eqref{eq:VF-abstract}, we immediately obtain the
Galerkin orthogonality
\begin{equation*}
\bb(u-\tilde u_h,\tilde q_h) = 0\quad \forall \tilde q_h\in
Y_h^{\text{prac}}\subset Y_h, 
\end{equation*} 
which induces the Galerkin projection $\OG_h:X\to X_h$ defined as $\OG_h u =
\widetilde{u}_h$. Let us first show the following auxiliary result. 

\begin{lemma}
For all $v_h\in X_h$ it holds that   
\begin{equation}\label{eq:supremum-equality}
\sup_{0\neq q_h\in Y_h} \frac{\bb(v_h,q_h)}{\norm{q_h}_Y} = \sup_{0\neq
\tilde q_h\in Y_h^{\text{prac}}} \frac{\bb(v_h,\tilde q_h)}{\norm{\tilde
q_h}_Y}.
\end{equation}
\end{lemma}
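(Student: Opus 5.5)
The proof mirrors the projection argument used for Theorem~\ref{thm:optimal setting main results}, now carried out inside $Y_h$ rather than $Y$. Fix $v_h\in X_h$ and let $\bar q_h:=\OR_{Y_h}^{-1}\OB_h v_h\in Y_h^{\text{prac}}$. By the characterization of the practical test space used in the previous lemmas, $\bar q_h$ is the unique element of $Y_h$ with
\begin{equation*}
  \inner{\bar q_h}{q_h}_Y = \bb(v_h,q_h)\quad \forall q_h\in Y_h .
\end{equation*}
Since $Y_h^{\text{prac}}\subseteq Y_h$, the supremum over $Y_h^{\text{prac}}$ can be no larger than the supremum over $Y_h$. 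So only the inequality ``$\leq$'' in \eqref{eq:supremum-equality} needs proof.

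For that inequality, I rewrite the left-hand side with the Riesz identity above:
\begin{equation*}
  \sup_{0\neq q_h\in Y_h}\frac{\bb(v_h,q_h)}{\norm{q_h}_Y}
  = \sup_{0\neq q_h\in Y_h}\frac{\inner{\bar q_h}{q_h}_Y}{\norm{q_h}_Y}
  = \norm{\bar q_h}_Y ,
\end{equation*}
using Cauchy--Schwarz and the choice $q_h=\bar q_h$ (if $\bar q_h=0$, both sides vanish trivially). Because $\bar q_h\in Y_h^{\text{prac}}$, it is itself an admissible test function in the right-hand supremum whenever $\bar q_h\neq 0$. This gives
\begin{equation*}
  \sup_{0\neq \tilde q_h\in Y_h^{\text{prac}}}\frac{\bb(v_h,\tilde q_h)}{\norm{\tilde q_h}_Y}
  \geq \frac{\bb(v_h,\bar q_h)}{\norm{\bar q_h}_Y} = \frac{\inner{\bar q_h}{\bar q_h}_Y}{\norm{\bar q_h}_Y}=\norm{\bar q_h}_Y ,
\end{equation*}
which closes the chain.

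The only delicate point is the degenerate case $\bar q_h=0$. It occurs exactly when $\bb(v_h,\cdot)$ vanishes on $Y_h$, and then both suprema equal zero; this holds provided $Y_h^{\text{prac}}\neq\{0\}$, or under the convention that a supremum over the empty set is zero. Under \eqref{eq:discrete-inf-sup-B} this case arises only for $v_h=0$, but the argument does not need that assumption.

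An alternative route, closer to the proof of Theorem~\ref{thm:optimal setting main results}, inserts the $Y$-orthogonal projection onto $Y_h^{\text{prac}}$ into the numerator and uses $\norm{\OP q_h}_Y\le\norm{q_h}_Y$. This works because $\inner{\bar q_h}{q_h}_Y$ depends only on the component of $q_h$ in $Y_h^{\text{prac}}$. The direct choice $q_h=\bar q_h$ above is shorter and is the version I would write out.
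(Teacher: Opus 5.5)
Your proof is correct and takes the same route as the paper. Both arguments identify the supremum over $Y_h$ with $\norm{\bar q_h}_Y$ through the discrete Riesz representative $\bar q_h\in Y_h^{\text{prac}}$, and then note that this value is attained at $\bar q_h$ itself; your separate handling of the inclusion direction and of the degenerate case $\bar q_h=0$ is extra care the paper leaves implicit.
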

\proof{
Let $0\neq v_h\in X_h$ be arbitrary but fixed and $\tilde q_h\in Y_h^{\text{prac}}$
be the unique solution of
\begin{equation*}
\inner{\tilde q_h}{q_h}_Y = \bb(v_h,q_h)\quad \forall q_h\in Y_h.
\end{equation*}
We note that
\begin{equation*}
\norm{p_h}_Y = \sup_{0\neq q_h\in Y_h}\frac{\inner{p_h}{q_h}_Y}{\norm{q_h}_{Y}} =
\frac{\inner{p_h}{p_h}_Y}{\norm{p_h}_{Y}}= \sup_{0\neq q_h\in Y_h^{\text{prac}}}
\frac{\inner{p_h}{q_h}_Y}{\norm{q_h}_{Y}}, \quad \text{for any }0\neq p_h\in
Y_h^{\text{prac}},
\end{equation*}
which allows us to compute
\begin{align*}
\sup_{0\neq q_h\in Y_h}\frac{\bb(v_h,q_h)}{\norm{q_h}_Y} &= \sup_{0\neq q_h
\in Y_h}\frac{\inner{\tilde q_h}{q_h}_Y}{\norm{q_h}_Y}
= \sup_{0\neq q_h\in Y_h^{\text{prac}}}\frac{\inner{\tilde q_h}{q_h}_Y}{\norm{
q_h}_Y}
= \sup_{0\neq q_h\in Y_h^{\text{prac}}}\frac{\bb(v_h,q_h)}{\norm{q_h}_Y}.
\end{align*}
}

Using the discrete inf-sup stability \eqref{eq:discrete-inf-sup-B}, we can
now derive the bound of the Galerkin projection as follows
\begin{align*}
\norm{\OG_hv}_X&\leq \frac{1}{c^B_{0}}\sup_{0\neq q_h\in Y_h}\frac{\bb(\OG_hv,
q_h)}{\norm{q_h}_Y} = \frac{1}{c^B_{0}}\sup_{0\neq \tilde q_h\in Y_h^{\text{
prac}}}\frac{\bb(\OG_hv,\tilde q_h)}{\norm{\tilde q_h}_Y}\\
&= \frac{1}{c^B_{0}}\sup_{0\neq \tilde q_h\in Y_h^{\text{prac}}}\frac{\bb(v,
\tilde q_h)}{\norm{\tilde q_h}_Y} \leq \frac{C^B}{c^B_{0}}\norm{v}_X.
\end{align*}
Since $\norm{\OG_h}_{X\to X} = \norm{\OI-\OG_h}_{X\to X}$, see
\cite[Cor. 2]{Carstensen:2025}, we get 
\begin{align*}
\norm{u-\tilde u_h}_X = \norm{(\OI-\OG_h)(u-v_h)}_X\leq \norm{\OI-\OG_h}_{X\to
X}\norm{u-v_h}_X\leq \frac{C^B}{c^B_{0}}\norm{u-v_h}_X.
\end{align*}

Combining all these pieces, we have the following result. 
\begin{theorem}[{\cite[Thm. 5.2]{DeG25}}]\label{thm:practical-error-estimates}
Let $f\in Y'$ and let the discrete inf-sup condition \eqref{eq:discrete-inf-sup-B}
hold true. Then \eqref{eq:DVF-practical} admits a unique solution $\tilde u_h
\in X_h$ satisfying
\begin{equation*}
\norm{u-\tilde u_h}_X\leq \frac{C^B}{c^B_{0}}\inf_{v_h\in X_h}\norm{u-v_h}_X,
\end{equation*}
where $u\in X$ denotes the unique solution of \eqref{eq:VF-abstract}.  
\end{theorem}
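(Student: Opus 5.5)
The argument has two parts: unique solvability of \eqref{eq:DVF-practical}, and the quasi-optimality estimate via the Galerkin projection $\OG_h$.

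\textbf{Unique solvability.} By the previous lemma, \eqref{eq:DVF-practical} is equivalent to the saddle point problem \eqref{eq:SPF-practical}, so it suffices to treat \eqref{eq:SPF-practical}. That system is a square linear system in $Y_h\times X_h$, so uniqueness implies existence. Take $f=0$, and test the first equation with $q_h=p_h$ and the second with $v_h=\tilde u_h$. Adding gives $\norm{p_h}_Y^2=0$, hence $p_h=0$. The first equation then reduces to $\bb(\tilde u_h,q_h)=0$ for all $q_h\in Y_h$, and the discrete inf-sup condition \eqref{eq:discrete-inf-sup-B} forces $\tilde u_h=0$. Equivalently, $\dim Y_h^{\text{prac}}=\dim X_h$, and by \eqref{eq:supremum-equality} the inf-sup condition transfers to the square pair $(X_h,Y_h^{\text{prac}})$.

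\textbf{The error estimate.} Galerkin orthogonality, $\bb(u-\tilde u_h,\tilde q_h)=0$ for all $\tilde q_h\in Y_h^{\text{prac}}$, shows that $\OG_h:u\mapsto\tilde u_h$ is a well-defined linear projection onto $X_h$ with $\OG_h v_h=v_h$. To bound it, fix $v\in X$ and proceed in three steps:
\begin{itemize}
\item apply \eqref{eq:discrete-inf-sup-B} to $\OG_h v$;
\item replace the supremum over $Y_h$ by the supremum over $Y_h^{\text{prac}}$, which \eqref{eq:supremum-equality} permits;
\item on $Y_h^{\text{prac}}$, replace $\bb(\OG_h v,\cdot)$ by $\bb(v,\cdot)$ using Galerkin orthogonality, and bound the result by continuity of $\OB$.
\end{itemize}
This yields $\norm{\OG_h}_{X\to X}\le C^B/c^B_0$.

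Since $\OG_h$ is a nontrivial idempotent on the Hilbert space $X$, Kato's identity \cite[Cor.~2]{Carstensen:2025} gives $\norm{\OI-\OG_h}=\norm{\OG_h}$. Hence, for arbitrary $v_h\in X_h$,
\[
u-\tilde u_h=(\OI-\OG_h)(u-v_h),
\]
which gives the claimed bound. Taking the infimum over $v_h\in X_h$ completes the proof.

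\textbf{Main obstacle.} The delicate point is obtaining the sharp constant $C^B/c^B_0$ rather than $1+C^B/c^B_0$. This requires that $\OG_h$ be a genuine projection, which needs $\OG_h\neq 0$ and $\OG_h\neq\OI$, together with the Hilbert structure of $X$ so that Kato's identity applies. I would verify these conditions explicitly, and note that in the degenerate cases the estimate is trivial.
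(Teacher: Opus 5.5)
Your proposal is correct and follows the paper's proof step for step. You bound $\norm{\OG_h}_{X\to X}\le C^B/c^B_0$ using \eqref{eq:discrete-inf-sup-B}, the supremum identity \eqref{eq:supremum-equality}, Galerkin orthogonality and continuity, and then combine $\norm{\OI-\OG_h}_{X\to X}=\norm{\OG_h}_{X\to X}$ from \cite[Cor.~2]{Carstensen:2025} with $u-\tilde u_h=(\OI-\OG_h)(u-v_h)$. Your explicit uniqueness argument for \eqref{eq:SPF-practical} (testing with $q_h=p_h$ and $v_h=\tilde u_h$) is a useful addition, since the paper's lemma only asserts it. One correction: the degenerate case $X_h=\{0\}$ is not quite trivial, because there the bound needs $c^B_0\le C^B$; this holds automatically whenever $X_h\neq\{0\}$.
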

\begin{remark}
When comparing Theorems~\ref{thm:optimal setting main results} and
\ref{thm:practical-error-estimates}, we see the only difference when considering
the practical test space instead of the optimal one is in the constant.
\end{remark}

\subsection{Error estimator}
In the discretized saddle point system \eqref{eq:SPF-practical}, we introduced
the quantity $p_h\in Y_h$, which is a realization of the discrete residual. In
the following, we will show that it can be used as an error estimator if the
discrete inf-sup condition \eqref{eq:discrete-inf-sup-B} is satisfied.

In order to get there, we recall that \eqref{eq:discrete-inf-sup-B} is equivalent
to the existence of a Fortin operator \cite[Thm. 1]{ErnGuermond:2016}, i.e.,
there exists a linear operator $\Pi_h:Y\to Y_h$
satisfying
\newcounter{descitemF}
\renewcommand\thedescitemF{F\arabic{descitemF}}
\begin{description}
\item[(F1)]\refstepcounter{descitemF}\label{ass:fortin 1} $\bb(v_h,q-\Pi_h q
 ) = 0$ for all $v_h \in X_h$, and for all $q\in Y$, 
\item[(F2)]\refstepcounter{descitemF}\label{ass:fortin 2} $\norm{\Pi_h q}_Y \leq
\dfrac{C^B}{c^B_{0}} \norm{q}_Y$ for all $q\in Y$.
\end{description}

Now we are in the position to introduce the following result.
\begin{theorem}[{\cite[Thm. 6.4]{DeG25}}]\label{thm:errorEstimator}
Let $X_h\subset X$ and $Y_h\subset Y$ be such that the discrete inf-sup condition 
\eqref{eq:discrete-inf-sup-B} holds and let $u\in X$ and $(p_h,\tilde{u}_h)
\in Y_h\times X_h$ be the unique solutions of \eqref{eq:VF-abstract} and
\eqref{eq:SPF-practical}, respectively. Then
\begin{equation*}
\frac{1}{C^B}\norm{p_h}_Y \leq \norm{u-\tilde{u}_h}_X\leq \frac{C^B}
{\alpha_{\text{is}}^B c^B_{0}}\norm{p_h}_Y + \text{osc}(f),
\end{equation*} 
where $\text{osc}(f) := \dfrac{1}{\alpha_{\text{is}}^B}\norm{f\circ (\OI-\Pi_h)
}_{Y'}$ denotes the data oscillation.
\end{theorem}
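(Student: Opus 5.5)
The plan is to prove the two inequalities separately, using the characterization of $p_h$ from the first equation of \eqref{eq:SPF-practical}. Since $\bb(u,q_h)=\skpr{f,q_h}$ for all $q_h\in Y_h$, that equation can be rewritten as
\begin{equation*}
\inner{p_h}{q_h}_Y = \bb(u-\tilde u_h,q_h)\quad \forall q_h\in Y_h .
\end{equation*}
Thus $p_h$ is the discrete Riesz representative of the residual $\OB(u-\tilde u_h)$ restricted to $Y_h$.

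For the lower bound, take $q_h=p_h$ in this identity. By boundedness of $\OB$,
\begin{equation*}
\norm{p_h}_Y^2 = \bb(u-\tilde u_h,p_h)\le C^B\norm{u-\tilde u_h}_X\norm{p_h}_Y,
\end{equation*}
which gives $\frac{1}{C^B}\norm{p_h}_Y\le \norm{u-\tilde u_h}_X$ at once.

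For the upper bound, start from \textbf{(BN1)}:
\begin{equation*}
\alpha_{\text{is}}^B\norm{u-\tilde u_h}_X\le \sup_{0\neq q\in Y}\frac{\bb(u-\tilde u_h,q)}{\norm{q}_Y}.
\end{equation*}
For fixed $q\in Y$, insert the Fortin operator $\Pi_h$ and split
\begin{equation*}
\bb(u-\tilde u_h,q)=\bb(u-\tilde u_h,\Pi_h q)+\bb(u-\tilde u_h,q-\Pi_h q).
\end{equation*}
\begin{itemize}
\item The first term equals $\inner{p_h}{\Pi_h q}_Y$ because $\Pi_h q\in Y_h$. By Cauchy--Schwarz and \textbf{(F2)} it is bounded by $\frac{C^B}{c_0^B}\norm{p_h}_Y\norm{q}_Y$.
\item In the second term, write $\bb(u,q-\Pi_h q)=\skpr{f,q-\Pi_h q}$. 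Since $\tilde u_h\in X_h$, \textbf{(F1)} gives $\bb(\tilde u_h,q-\Pi_h q)=0$. The term therefore reduces to $\skpr{f,(\OI-\Pi_h)q}\le \norm{f\circ(\OI-\Pi_h)}_{Y'}\norm{q}_Y$.
\end{itemize}
Dividing by $\norm{q}_Y$, taking the supremum and dividing by $\alpha_{\text{is}}^B$ yields
\begin{equation*}
\norm{u-\tilde u_h}_X\le \frac{C^B}{\alpha_{\text{is}}^B c_0^B}\norm{p_h}_Y+\text{osc}(f).
\end{equation*}

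The only delicate point is the second term of the splitting. It must use \textbf{(F1)} for the discrete part $\tilde u_h$ only, not for $u$, since $u\notin X_h$. The continuous equation \eqref{eq:VF-abstract} is what converts the remaining part into data. Existence of $\Pi_h$ with constant $C^B/c_0^B$ is taken from the cited equivalence with \eqref{eq:discrete-inf-sup-B}.
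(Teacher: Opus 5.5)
Your proof is correct and follows the paper's argument essentially line by line. The lower bound comes from testing the first equation of \eqref{eq:SPF-practical} with $q_h=p_h$. The upper bound uses \textbf{(BN1)}, the Fortin splitting $q=\Pi_h q+(q-\Pi_h q)$, \textbf{(F1)} applied only to $\tilde u_h$, the residual identity $\inner{p_h}{\Pi_h q}_Y=\bb(u-\tilde u_h,\Pi_h q)$, and \textbf{(F2)}; the only cosmetic difference is that you bound both terms for fixed $q$ before taking the supremum, whereas the paper splits into two suprema.
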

\proof{
For the lower estimate, recall that $\skpr{f,q} = \bb(u,q)$ for all $q\in Y$.
Then, testing the first line in \eqref{eq:SPF-practical} with $q_h=p_h\in Y_h$,
we compute
\begin{align*}
\norm{p_h}_Y^2 &= \inner{p_h}{p_h}_Y = \skpr{f,p_h}-\bb(\tilde u_h,p_h) = \bb(u-
\tilde u_h,p_h) \leq C^B\norm{u-\tilde{u}_h}_{X}\norm{p_h}_Y.
\end{align*}
For the upper bound we use \eqref{ass:B bounded inverse}, the properties of the 
Fortin operator \eqref{ass:fortin 1}-\eqref{ass:fortin 2} and the first line in
\eqref{eq:SPF-practical} to compute 
\begin{align*}
\alpha_{\text{is}}^B \norm{u-\tilde{u}_h}_X & \leq \sup_{0\neq q\in Y}\frac{
\bb(u-\tilde{u}_h,q)}{\norm{q}_Y} \\
& = \sup_{0\neq q\in Y}\left[\frac{\bb(u-\tilde{u}_h,q-\Pi_hq)}{\norm{
q}_Y} + \frac{\bb(u-\tilde{u}_h,\Pi_hq)}{\norm{q}_Y}\right]\\
& \leq \sup_{0\neq q\in Y} \frac{\bb(u,q-\Pi_hq)}{\norm{q}_Y} + \sup_{0\neq
q\in Y}\frac{\bb(u-\tilde{u}_h,\Pi_h q)}{\norm{q}}\\
& = \sup_{0\neq q\in Y}\frac{\skpr{f,(\OI-\Pi_h)q}}{\norm{q}_Y} + \sup_{0\neq q
\in Y} \frac{\inner{p_h}{\Pi_hq}_Y}{\norm{q}_Y}\\
& \leq \norm{f\circ(\OI-\Pi_h)}_{Y'} + \frac{C^B}{c^B_{0}}\norm{p_h}_Y,
\end{align*}     
which concludes the proof. 
}
\begin{remark}
The oscillation term can be bounded by the best approximation in $X_h$, i.e., 
for any $v_h\in X_h$ we compute  
 \begin{align*}
\norm{f\circ(\OI-\Pi_h)}_{Y'} &=\sup_{0\neq q\in Y}\frac{\skpr{f,(\OI-\Pi_h)q}}
{\norm{q}_Y} = \sup_{0\neq q\in Y}\frac{\bb(u-v_h,(\OI-\Pi_h)q)}{\norm{q}_Y}\\
&\leq C^B \norm{u-v_h}_X\norm{\OI-\Pi_h}  = C^B \norm{u-v_h}_X\norm{\Pi_h}\leq
\frac{[C^B]^2}{c^B_{0}}\norm{u-v_h}_X.
 \end{align*}  
\end{remark}

\section{Application to the Weakly Singular Operator for the Wave Equation in 
One Dimension}\label{sec:Vin1d}
We proceed to apply the framework from Section~\ref{sec:abstract-setting}
in order to investigate the stability of some standard discretizations of the
variational formulations for the weakly singular boundary integral equation
for the wave equation in $1d$. We reckon this is not a very standard setting,
so we begin by briefly introducing the framework in subsection~\ref{ssec:PropV}
and then move to building the required pieces in subsection~\ref{ssec:optTestV}.
We conclude this Section by using all these ingredients to show our first
stability results.

\subsection{Properties of the Weakly Singular Operator}\label{ssec:PropV}
For a bounded one-dimensional spatial domain $\Omega = (0,L)$ and a given (finite) 
time horizon $T>0$, we consider the following Dirichlet initial boundary value 
problem:
\begin{equation}\label{DBVP}
  \left. \begin{array}{rcll}
           \partial_{tt} u(x,t) - \partial_{xx} u(x,t)
           & = & 0 & \mbox{for} \; (x,t) \in
                     Q := (0,L) \times (0,T), \\[1mm]
           u(x,0) = \partial_t u(x,t)_{|t=0}
           & = & 0 & \mbox{for} \; x \in (0,L),\\[1mm]
           u(0,t)
           & = & g_0(t) & \mbox{for} \; t \in (0,T), \\[1mm]
           u(L,t)
           & = & g_L(t) & \mbox{for} \; t \in (0,T).
         \end{array} \right \}
\end{equation}
In the one-dimensional case, the fundamental solution of the wave equation is 
the Heaviside function
\begin{equation*}
U^*(x,t) = \frac{1}{2} \, {\mathsf{H}}(t-|x|),
\end{equation*}
and we can represent the solution $u$ of (\ref{DBVP}) by using the single layer 
potential
\begin{equation*}
  u(x,t) := (\widetilde{\OV}z)(x,t) = \frac{1}{2} \int_0^{t-|x|} z(0,s) \, ds +
  \frac{1}{2} \int_0^{t-|x-L|} z(L,s) \, ds \quad \mbox{for} \;
  (x,t) \in Q .
\end{equation*}
In order to determine the yet unknown density function $z$, with $(z_0,z_L):=
(z(0,\cdot),z(L,\cdot))$, we consider the boundary integral equation (BIE) for 
$x \to 0$,
\begin{equation}\label{eq:V0}
(\OV_0z)(t) :=
  \frac{1}{2} \int_0^t z_0(s) \, ds +
\frac{1}{2} \int_0^{t-L} z_L(s) \, ds = g_0(t) \quad \mbox{for} \;
t \in (0,T),
\end{equation}
and for $ x \to L$,
\begin{equation}\label{eq:VL}
  (\OV_Lz)(t) :=
\frac{1}{2} \int_0^{t-L} z_0(s) \, ds +
  \frac{1}{2} \int_0^t z_L(s) \, ds = g_L(t) \quad \mbox{for} \; t \in (0,T).
\end{equation}
We write the BIEs (\ref{eq:V0}) and (\ref{eq:VL}) in
compact form, for $z=(z_0,z_L)$, as
\begin{equation}\label{BIE}
  (\OV z)(t) =
  \begin{pmatrix}
           (\OV_0z)(t) \\
           (\OV_Lz)(t)
  \end{pmatrix}
  = \begin{pmatrix}
        \OV_{00} & \OV_{0L} \\
        \OV_{L0} & \OV_{LL}
    \end{pmatrix}
    \begin{pmatrix}
        z_0 \\
        z_L
    \end{pmatrix}
    (t)=
    \begin{pmatrix}
        g_0(t) \\
        g_L(t)
    \end{pmatrix}
    = g(t), \quad t \in (0,T)
\end{equation}
and consider the spaces $L^2(\Sigma):=L^2(0,T)\times L^2(0,T)$, and
\begin{align}
    H^1_{0,}(\Sigma)
    &:=\{ v=(v_0, v_L)^T : v_0, v_L \in H^1(0,T), v_0(0)=v_L(0)=0\},\\
H^1_{,0}(\Sigma)
    &:=\{ v =(v_0,v_L)^T: v_0, v_L \in H^1(0,T), v_0(T)=v_L(T)=0\},
\end{align}
endowed with the norms $\norm{v}_{H_{0,}^1(0,T)} = \norm{v}_{H_{,0}^1(0,T)} = 
\norm{v'}_{L^2(0,T)}$. For $0\leq s\leq 1$, we define $H^{s}_{0,}(\Sigma)$, and 
$H^{s}_{,0}(\Sigma)$ by interpolation as
\begin{align*}
  H^{s}_{0,}(\Sigma) :=
  [L^2(\Sigma),H^1_{0,}(\Sigma)]_{\theta=s}, \qquad
  H^{s}_{,0}(\Sigma) :=
  [L^2(\Sigma),H^1_{,0}(\Sigma)]_{\theta=s}.
\end{align*}

In $1d$, the weakly singular boundary integral operator $\OV$ is an isomorphism
when mapping $\OV: [H^s_{,0}(\Sigma)]^\prime \to H^{1-s}_{0,}(\Sigma)$, for $0
\leq s\leq 1$ \cite[Sect.~2]{SUZ21}.

From \eqref{BIE}, we see that we can split $\OV$ into the operators 
$\OV_D$ and $V_{OD}$ as
\begin{equation}\label{eq:decouplingV}
  \OV_D := \begin{pmatrix}
    \OV_{00} & 0\\ 0 &\OV_{LL}
  \end{pmatrix},
  \quad \text{and}\quad 
  \OV_{OD} := \begin{pmatrix}
    0 & \OV_{0L}\\ \OV_{L0} & 0
  \end{pmatrix}. 
\end{equation}
Introducing
\begin{equation}
\label{eq:ntimeslices}
 n := \min\{ m\in
\mathbb{N} : T\leq mL\},
\end{equation}
and the time slices
\begin{equation}
\label{eq:timeslices} 
T_j := ((j-1)L,j L) \text{ for }j =1,\ldots,n-1, \quad \text{ and }\:T_n:=((n-1)L,T);
\end{equation}
we note that the off-diagonal term $\OV_{OD}$ contains the cross-boundary terms, 
which are zero if $t<L$, i.e., $\OV_{OD}$ is present only when there 
are multiple time slices and couples them. The diagonal term $\OV_D$ behaves like
applying $\OV$ on a single time slice, independent
of $n$. Moreover, we have that 
\cite[Lemma 2.2]{HLS23}:  
\begin{equation}\label{eq:norm-via-diagonal-operator}
  \frac{1}{2}\norm{z}_{[H^1_{,0}(\Sigma)]'}=\norm{\OV_D z}_{L^{2}(\Sigma)}, 
  \qquad \forall z\in [H^1_{,0}(\Sigma)]'.  
\end{equation} 
Let $\Sigma_j$, $j=1,\ldots,n$, denote the lateral boundary restricted to the
$j^{th}$ time-slice in time $T_j$. Then there holds the shifting relation
\cite[Eq.~(2.9)]{HLS23}
\begin{equation}\label{eq:shifting-time-slices}
\norm{\OV_D z}_{L^{2}(\Sigma_{j-1})} = \norm{\OV_{OD}z}_{L^{2}(\Sigma_j)}, 
\quad j=2,\ldots,n-1
\end{equation}
and 
\begin{equation*}
\norm{\OV_D z}_{L^{2}(\Sigma_{n-1})} \geq \norm{\OV_{OD}z}_{L^{2}(\Sigma_{n})},   
\end{equation*}
with equality if $T=nL$. 
With this, we can easily compute the boundedness  
\begin{equation*}
  \norm{\OV z}_{L^2(\Sigma)} 
  \leq \norm{\OV_D z}_{L^2(\Sigma)} + \norm{\OV_{OD}
  z}_{L^2(\Sigma)} 
  \leq  \frac{1}{2}\norm{z}_{[H^1_{,0}(\Sigma)]'} + \sum_{j=2}^{n-1
  } \norm{\OV_D z}_{L^2(\Sigma_j)} \leq \norm{z}_{[H^1_{,0}(\Sigma)]'}.
\end{equation*}

\subsection{Characterization of the Optimal Test Space}
\label{ssec:optTestV}
As already mentioned, we will apply the abstract framework outlined in
Section \ref{sec:abstract-setting} to investigate stable discrete formulations.
Let us begin by considering $\OV :[H^1_{,0}(\Sigma)]'\to L^2(\Sigma)$ and some
given function $g\in L^2(\Sigma)$. We aim to solve: Find $z\in [H^1_{,0}(\Sigma
)]'$ such that\footnote{Here we could simply use the $L^2$-inner product,
but we prefer to write the duality pairing to make the connection with
Sections~\ref{sec:abstract-setting} and \ref{sec:NoConstraints} easier. However,
for later convenience, when considering a given subset $D\subset\mathbb{R}^d$
with $d=1,2$, we denote the $L^2(D)$-duality pairing as $\dual{\cdot}{\cdot}_{D
}$ instead of simply $\dual{\cdot}{\cdot}$ as proposed in
Section~\ref{sec:abstract-setting}.}
\begin{equation}\label{eq:wave-operator-equation}
\dual{\OV z}{q}_{\Sigma}=\dual{g}{q}_{\Sigma},\quad\forall q\in L^2
(\Sigma). 
\end{equation}

From \cite[Eq.~(2.11)]{HLS23} we know that $\OV:[H^1_{,0}(\Sigma)]'\to 
L^2(\Sigma)$ is an isomorphism, and there holds the inf-sup stability condition
\begin{equation}\label{eq:inf-sup-optimal}
\alpha_{\text{is}}^{\OV}(n) 
:=  \sin\left(\frac{\pi}{2(2n+1)}\right) 
\leq \inf_{z\in [H^1_{,0}(\Sigma)]'}\sup_{0\neq q\in L^2(\Sigma)}
\frac{\dual{\OV z}{q}_{\Sigma}}{\norm{z}_{[H^1_{,0}(\Sigma)]'}\norm{q}_{L^2(
\Sigma)}},
\end{equation}
where $n$ is defined as in \eqref{eq:ntimeslices}.

Now, let us write our mesh as $\Sigma_h = (\Sigma_h^0,\Sigma_h^L)$, where
$\Sigma_h^\ast = \bigcup_{\ell=1}^{N_\ast} \tau_\ell^\ast$, $\ast\in\{0,L\},$
denotes a decomposition of $\Sigma^\ast$ into boundary elements (BE) and consider
the BE space of \emph{piecewise constant functions} $S^0(\Sigma_h) = \text{span}
\{\varphi_\ell^0\}_{\ell=1}^{N:=N_0+N_L}$, and the BE space of \emph{piecewise
linear functions with zero initial condition} $S^1_{0,}(\Sigma_h) = \text{span}
\{\varphi_k^1\}_{k=1}^N$. We introduce the following constraint on the mesh.

\begin{assumption}\label{Assumption:mesh}
Let us assume $T = nL$ for some $n\in \mathbb{N}$,
and that the mesh on the boundary restricted to a time-slice
corresponds to the mesh on the opposite boundary restricted to the
subsequent time-slice, i.e., 
\begin{align}
    \Sigma^0_{h|T_j}= \Sigma^L_{h|T_{j+1}}, \text{ and }\: \Sigma^L_{h|T_j}=
    \Sigma^0_{h|T_{j+1}}, \quad \forall j=1,\dots,n-1.
\end{align}
\end{assumption}
\begin{figure}[H]
  \vspace{-.25cm}
      \centering
    \begin{tikzpicture}

\shade[left color=white,right color=lightgray] (0,0) rectangle (5,5) node[pos=.5] {$1^{\text{st}}$ time-slice};
\shade[left color=lightgray,right color=white] (0,5) rectangle (5,10) node[pos=.5] {$2^{\text{nd}}$ time-slice};
\def\lastx{-0.05}
\draw[black] (0,0)--(0,10);
\draw[black] (5,0)--(5,10);
\foreach \x[remember =\x as \lastx] in {0,0.4, 1, 2, 4 ,5}{
    \draw (0,\x) node[rectangle, fill =orange, scale = 0.4, draw=black, double] {};
    \draw (5,\x+5) node[rectangle, fill =orange, scale = 0.4, draw= black,double] {};
    }
\def\lastx{-0.05}
\foreach \x[remember =\x as \lastx] in {0,1,2.5,2.8,4.3,5}{
    \draw (0,\x+5) node[circle, fill =blue, scale = 0.4, draw= black, double] {};
    \draw (5,\x) node[circle, fill =blue, scale = 0.4, draw= black, double] {};
    }
\draw[dotted] (-1.5,0)  -- (0,0) node[pos=-0.5] {$t=0$};
\draw[dotted] (-1.5,5)  -- (0,5) node[pos=-0.5] {$t=L$};
\draw[dotted] (-1.5,10)  -- (0,10) node[pos=-0.5] {$t=2L$};
\draw[dotted] (0,-1)  -- (0,0) node[pos=-0.3] {$x=0$};
\draw[dotted] (5,-1)  -- (5,0) node[pos=-0.3] {$x=L$};

\end{tikzpicture}%
    \caption{Example of mesh fulfilling Assumption~\ref{Assumption:mesh}.}
    \label{fig:Example_mesh}
  \end{figure}
  
Let $S^0(\Sigma_h)= \text{span}\{\varphi^0_\ell\}_{\ell=1}^N$. Following 
Section~\ref{sec:abstract-setting}, we obtain:
\begin{theorem}\label{thm:YopV}
Under Assumption \ref{Assumption:mesh} on the mesh, for any $g = \OV z\in L^2(
\Sigma)$ the discrete variational formulation:
find $z_h\in S^0(\Sigma_h)$ such that
\begin{equation}\label{eq:DVF-H-1,0 to L2}
\dual{\OV z_h}{q_h}_{\Sigma} = \dual{g}{q_h}_{\Sigma},\quad \forall
q_h\in S_{0,}^{1}(\Sigma_h)
\end{equation}
admits a unique solution satisfying the estimate 
\begin{equation*}
  \norm{z-z_h}_{[H^1_{,0}(\Sigma)]'} \leq \frac{1}{\alpha_{\text{is}}^{\OV}(n)}
  \inf_{v_h\in S^0(\Sigma_h)}\norm{z-v_h}_{[H^1_{,0}(\Sigma)]'}.
\end{equation*}
\end{theorem}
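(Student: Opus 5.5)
The plan is to show that, under Assumption~\ref{Assumption:mesh}, the trial/test pair in \eqref{eq:DVF-H-1,0 to L2} is exactly the optimal pair of Section~\ref{sec:abstract-setting}. The result then follows from Theorem~\ref{thm:optimal setting main results}.

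\textbf{Step 1: the optimal test space is $\OV(S^0(\Sigma_h))$.} We set $X=[H^1_{,0}(\Sigma)]'$, $Y=L^2(\Sigma)$ and $\OB=\OV$. Since $Y=L^2(\Sigma)$ and the duality pairing extends the $L^2$ inner product, the Riesz operator $\OR_Y$ is the identity. Hence
\[
Y_h^{\text{opt}}=\OR_Y^{-1}\OV(X_h)=\OV\bigl(S^0(\Sigma_h)\bigr).
\]

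\textbf{Step 2: $\OV(S^0(\Sigma_h))\subseteq S^1_{0,}(\Sigma_h)$.} Take $z_h=(z_{0,h},z_{L,h})$ piecewise constant. I compute the $0$-component of $\OV z_h$; the $L$-component is symmetric.
\begin{itemize}
\item The diagonal part $\OV_{00}z_{0,h}(t)=\tfrac12\int_0^t z_{0,h}(s)\,ds$ is continuous and vanishes at $t=0$. It is piecewise linear with breakpoints at the nodes of $\Sigma_h^0$.
\item The off-diagonal part $\OV_{0L}z_{L,h}(t)=\tfrac12\int_0^{t-L}z_{L,h}(s)\,ds$ vanishes identically for $t\le L$. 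It is continuous everywhere, including at $t=L$.
\item For $t\in T_{j+1}$, this off-diagonal part is piecewise linear with breakpoints at the nodes of $\Sigma^L_{h|T_j}$ shifted by $L$. By Assumption~\ref{Assumption:mesh}, these are precisely the nodes of $\Sigma^0_{h|T_{j+1}}$.
\item Because $T=nL$, the shifted mesh on each slice lands exactly inside $(0,T)$, with no leftover partial slice.
\end{itemize}
So both contributions are continuous, piecewise linear on $\Sigma_h^0$ and zero at $t=0$, which gives $\OV z_h\in S^1_{0,}(\Sigma_h)$.

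\textbf{Step 3: equality of spaces.} $\OV$ is an isomorphism $[H^1_{,0}(\Sigma)]'\to L^2(\Sigma)$, so it is injective on $S^0(\Sigma_h)$. Therefore $\dim \OV(S^0(\Sigma_h))=N=\dim S^1_{0,}(\Sigma_h)$, and the inclusion from Step~2 is an equality. Consequently \eqref{eq:DVF-H-1,0 to L2} is exactly \eqref{eq:VF optimal test spaces} with the optimal test space.

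\textbf{Step 4: conclusion.} Theorem~\ref{thm:optimal setting main results} yields unique solvability and the quasi-optimality estimate with constant $C^B/\alpha_{\text{is}}^B$. Here $\alpha_{\text{is}}^B=\alpha_{\text{is}}^{\OV}(n)$ by \eqref{eq:inf-sup-optimal}. The boundedness computation at the end of Subsection~\ref{ssec:PropV}, which uses \eqref{eq:norm-via-diagonal-operator} and \eqref{eq:shifting-time-slices}, gives $C^B\le 1$. This yields the stated constant $1/\alpha_{\text{is}}^{\OV}(n)$.

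The main obstacle is Step~2, specifically the bookkeeping of the shifted breakpoints of the cross-boundary term. This is exactly where both parts of Assumption~\ref{Assumption:mesh} are used: the mesh matching between opposite boundaries on consecutive slices, and $T=nL$. Without them, $\OV z_h$ would acquire kinks off the mesh of $\Sigma_h$ and would not lie in $S^1_{0,}(\Sigma_h)$.
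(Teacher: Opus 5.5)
Your proposal is correct and follows the paper's proof. With $\OR_Y$ the identity, you identify $Y_h^{\text{opt}}=\OV(S^0(\Sigma_h))$, use Assumption~\ref{Assumption:mesh} to place the shifted cross-boundary terms in $S^1_{0,}(\Sigma_h)$, conclude equality of the spaces by the dimension count, and invoke Theorem~\ref{thm:optimal setting main results}. Your explicit check that $C^B\le 1$, which turns $C^B/\alpha_{\text{is}}^B$ into the stated $1/\alpha_{\text{is}}^{\OV}(n)$, fills in a step the paper leaves implicit.
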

\proof{
Let us consider the spaces $X = [H^1_{,0}(\Sigma)]'$ and $Y = L^2(\Sigma)$, and the
bilinear form $\bb(\cdot,\cdot): [H^1_{,0}(\Sigma)]'\times L^2(\Sigma) \to
\mathbb{R}$ defined as
\begin{equation*}
  \bb(z,q) := \dual{\OV z}{q}_{\Sigma}.
\end{equation*}
Then, the discrete variational formulation \eqref{eq:DVF-H-1,0 to L2} can be
rewritten as: Find $z_h\in X_h= S^0(\Sigma_h)$ such that
\begin{equation*}
  \bb(z_h,q_h) = \dual{g}{q_h}_{\Sigma},\quad \forall q_h\in Y_h= S_{
  0,}^1(\Sigma_h).
\end{equation*}  
If we can show that $Y_h= Y_h^{\text{opt}}$, the result follows from
Theorem~\ref{thm:optimal setting main results}. In our case the Riesz isomorphism
$\OR_Y : L^2(\Sigma) \to L^2(\Sigma)$ can be chosen to be the identity. Recall
that the optimal test space is then given as $Y_h^{\text{opt}} = \OR_Y^{-1}
\OV (X_h) = \OV (S^0(\Sigma_h))$.
So, we compute the action of the weakly singular operator on piecewise 
constant functions. For clarity, let us write $S^0(\Sigma_h) = S^0(\Sigma_{h}^0
)\times S^0(\Sigma_{h}^L)$, where $\Sigma_{h}^0$ and $\Sigma_{h}^L$ are the 
decompositions of the boundary at $x=0$ and $x=L$, respectively. Moreover, for
each $\varphi_\ell^0\in S^0(\Sigma_h)$, let $\text{supp}(\varphi_\ell^0):=
(t_{min},t_{max})$. Then, we either have $(t_{min},t_{max})\subset\overline
\Sigma_{h}^0$ or $(t_{min},t_{max})\subset \overline \Sigma_{h}^L$, and we
distinguish the following two cases:\\
\noindent\underline{\textbf{Case 1:} $T \leq L$.} 
Recall the splitting $\OV = (\OV_0,\OV_L)$, as in \eqref{eq:V0} 
and \eqref{eq:VL}.
If $(t_{min},t_{max})\subset \overline{\Sigma}_{h}^0$, then $(\OV_L \varphi_\ell^0
)(t)=0$, and
\begin{equation}\label{eq:Yopt-1-time-slice-0}
(\OV_0 \varphi_\ell^0)(t) = \frac{1}{2}\int_0^t \varphi_\ell^0(0,s)\,ds = 
\begin{Bmatrix*}[l]
    0,& t\leq t_{min},\\
    \frac{1}{2}(t-t_{min}), & t_{min}<t<t_{max},\\[0.03in]
\frac{1}{2}(t_{max}-t_{min}), & t_{max}\leq t,\\
\end{Bmatrix*}\in S^1_{0,}(\Sigma_{h}^0).
  \end{equation}
Analogously, if $(t_{min},t_{max})\subset \overline{\Sigma}_{h}^L$, then $(\OV_0
\varphi_\ell^0)(t)=0$, and
\begin{equation}\label{eq:Yopt-1-time-slice-L}
(\OV_L \varphi_\ell^0)(t) = \frac{1}{2}\int_0^t \varphi_\ell^0(L,s)\,ds = 
\begin{Bmatrix*}[l]
    0,& t\leq t_{min},\\
\frac{1}{2}(t-t_{min}), & t_{min}<t<t_{max},\\[0.03in]
\frac{1}{2}(t_{max}-t_{min}), & t_{max}\leq t,\\
\end{Bmatrix*}\in S^1_{0,}(\Sigma_{h}^L).
\end{equation}
Hence, we have that $Y_h^{\text{opt}} = V(S^0(\Sigma_h))\subset S^1_{0,}(\Sigma_h
)$ and since $\text{dim}(V(S^0(\Sigma_h))) = \text{dim}(S^0(\Sigma_h)) = \text{
dim}(S_{0,}^1(\Sigma_h))$, the equality $Y_h^{\text{opt}} = S_{0,}^1(\Sigma_h)$
holds.
  
\noindent
\underline{\textbf{Case 2}: $T>L$, i.e., we have at least two time slices.} 
Here, in addition to the expressions in \eqref{eq:Yopt-1-time-slice-0} and 
\eqref{eq:Yopt-1-time-slice-L} we have to account for the cross-boundary terms
induced by $\OV_{0L}$
and $\OV_{L0}$. We compute  
\begin{itemize}
 \item for $(t_{\text{min}},t_{\text{max}})\subset \overline{\Sigma}_{h}^0$, that
\begin{equation}\label{eq:Yopt-more-time-slice-0}
\frac{1}{2}\int_0^{t-L} \varphi_\ell^0(L,s)\,ds 
= \begin{cases}
    0,& t\leq t_{min},\\
\frac{1}{2}(t-L-t_{min}), & t_{min}+L<t<t_{max}+L,\\
\frac{1}{2}(t_{max}-t_{min}), & t_{max}+L\leq t.
\end{cases}
\end{equation}

\item and for $(t_{\text{min}},t_{\text{max}})\subset \overline{\Sigma}_{h}^L$,
we add
\begin{equation}\label{eq:Yopt-more-time-slice-L}
\frac{1}{2}\int_0^{t-L} \varphi_\ell^0(0,s)\,ds 
= \begin{cases}
    0,& t\leq t_{min},\\
\frac{1}{2}(t-L-t_{min}), & t_{min}+L<t<t_{max}+L,\\
\frac{1}{2}(t_{max}-t_{min}), & t_{max}+L\leq t.
\end{cases}
  \end{equation}
\end{itemize}
Note that \eqref{eq:Yopt-more-time-slice-0} and \eqref{eq:Yopt-more-time-slice-L} 
induce a relation between the meshes $\Sigma_{h}^0$ and $\Sigma_{h}^L$. Thus, 
we can only guarantee that $\OV_0 \varphi_\ell^0\in S^1_{0,}(\Sigma_{h}^0)$ and 
$\OV_L \varphi_\ell^0\in S^1_{0,}(\Sigma_{h}^L)$ if $\Sigma_{h}^0 
= \Sigma_{h}^L-L$, which is exactly Assumption \ref{Assumption:mesh}. 
Then, as in Case 1, we get $Y_h^{\text{opt}}= S_{0,}^1(\Sigma_h)$, which
finishes the proof.
}
Thus, under Assumption \ref{Assumption:mesh}, the optimal test space can be
characterized and realized explicitly. This assumption admits a clear physical
interpretation. Waves emanating from the left boundary reach the right boundary
after the first time slice. To ensure optimal stability, these waves must be
resolved accurately, which imposes the restriction on the mesh.

Nevertheless, stability can still be expected if the arriving waves are
approximated ``sufficiently well''. In other words, even without the mesh
constraint, the method may remain stable. This is confirmed in the following
subsection, where we also discuss alternative pairings that appear natural
but lead to unstable formulations.

\subsection{Dating guide for discrete spaces: Stable and unstable pairs}
\label{sec:mainResults}
We now use the developed framework to show (in-)stability of canonical pairs
of trial and test spaces.

\subsubsection{Why you shouldn't date yourself: The unstable pair
$S^0(\Sigma_h) - S^0(\Sigma_h)$}
At first glance, it seems natural to choose $X_h = S^0(\Sigma_h)\subset [H^1_{
,0}(\Sigma)]'$ and $Y_h = S^0(\Sigma_h)\subset L^2(\Sigma)$ as conforming trial
and test spaces. However, this combination has already been reported to be
unstable in the literature \cite[Chap.~2.1]{Gua10}. Here, we will use the
framework from Section~\ref{sec:abstract-setting} to finally prove it.

Let us consider the variational formulation: Find $z_h\in
S^0(\Sigma_h)$ such that 
\begin{equation}
\bb_0(z_h,q_h) := \dual{\OV z_h}{q_h}_{\Sigma} = \dual{g}{q_h}_{
\Sigma},\quad \quad\forall  q_h\in S^0(\Sigma_h).
\end{equation}   
For the sake of simplicity, we assume that 
$T\leq L$. The rationale is that if stability fails to hold for one time slice, 
it will also fail when considering more than one. Furthermore, it is sufficient 
to consider one of the boundary parts, i.e., either $x=0$ or $x=L$. With that 
in mind, let us decompose $(0,T)$ into $N$ elements with uniform mesh size $h 
= \frac{T}{N}$, i.e., each element is defined as $\tau_\ell :=(t_{\ell-1},
t_\ell)$ with $t_{\ell}:= h\ell$ for $\ell=1,\ldots,N$. We denote the 
corresponding space of \emph{piecewise constant functions} by $S^0_h(0,T) = 
\text{span}\{\varphi_\ell^0\}_{\ell=1}^N$ and the 
related space of \emph{piecewise linear functions with zero initial 
condition} by $S_{h;0,}^1(0,T) = \text{span}\{\varphi_k^1\}_{k=1}^N$.
Now, consider $Z_h(t) = \int_0^t z_h(s)\, ds \in S_{h;0,}^1(0,T)$. Then we have
\begin{equation*}
(\OV z_h)(t) = \frac{1}{2}\int_0^t z_h(s)\,ds = \frac{1}{2}\int_0^t Z_h'(s)\,
ds = \frac{Z_h(t)}{2},
\end{equation*}  
and 
\begin{equation*}
\bb_0(z_h,q_h) = \int_0^T (\OV z_h)(t)q_h(t)\, dt = \frac{1}{2}\int_0^T Z_h(t)
q_h(t)\, dt.
\end{equation*}
Moreover, using that $Z_h(0)=0$ and $\partial_t:H^1_{0,}(0,T)\to L^2(0,T)$ is
an isomorphism, it holds that
\begin{equation}\label{eq:H-1 to L2 norm}
\begin{aligned}
\norm{z_h}_{[H^1_{,0}(0,T)]'} &= \sup_{0\neq w\in H^1_{,0}(0,T)}\frac{\int_0^T
z_h(t)w(t)\, dt }{\norm{w'}_{L^2(0,T)}} = \sup_{0\neq w\in H^1_{,0}(0,T)}\frac{
\int_0^T Z_h'(t)w(t)\, dt}{\norm{w'}_{L^2(0,T)}}\\
& = \sup_{0\neq w\in H^1_{,0}(0,T)}\frac{-\int_0^T Z_h(t)w'(t)\, dt}{\norm{w'}_{
L^2(0,T)}}\\
&= \sup_{0\neq -w'=q\in L^2(0,T)} \frac{\int_0^T Z_h(t)q(t)\, dt }{
\norm{q}_{L^2(0,T)}} = \norm{Z_h}_{L^2(0,T)}.
\end{aligned}
\end{equation}
So, the discrete inf-sup constant is characterized as 
\begin{equation}\label{eq:discrete inf sup unstable}
c_0^{\OV} = \inf_{Z_h\in S_{h;0,}^1(0,T)}\sup_{0\neq q_h\in S_h^0(0,T)}\frac{
\frac{1}{2}\int_0^T Z_h(t)q_h(t)\, dt}{\norm{Z_h}_{L^2(0,T)}\norm{q_h}_{L^2(0,
T)}}.
\end{equation}
To get an intuitive feeling for why the pairing is unstable, we note that
\begin{equation*}
  \int_0^T Z_h(t)q_h(t)\, dt =  \int_0^T (\O\OQ_h^0 Z_h)(t)q_h(t)\,  dt,
\end{equation*}
where $\O\OQ_h^0:L^2(0,T)\to S_h^0(0,T)$ denotes the $L^2$-projection onto
piecewise constant functions. Choosing $\bar q_h = \OQ_h^0Z_h \in S_h^0(0,T)$,
we obtain
\begin{align*}
\frac{1}{2}\norm{\OQ_h^0Z_h}_{L^2(0,T)}\norm{\bar q_h}_{L^2(0,T)} &= \frac{1}{2}
\norm{\OQ_h^0Z_h}_{L^2(0,T)}^2 = \frac{1}{2}\int_0^T (\OQ_h^0 Z_h)(t)\bar q_h(t)
\, dt = \bb_0(z_h,\bar q_h) ,
\end{align*}
and thus
\begin{align*}
\frac{1}{2}\norm{\OQ_h^0 Z_h}_{L^2(0,T)} &\leq \sup_{0\neq q_h\in S_h^0(0,T)}
\frac{\bb_0(z_h,q_h)}{\norm{q_h}_{L^2(0,T)}} = \sup_{0\neq q_h\in S_h^0(0,T)}
\frac{\frac{1}{2}\int_0^T (\OQ_h^0Z_h)(t)q_h(t)\, dt }{\norm{q_h}_{L^2(0,T)}}\\
& \leq \frac{1}{2}\norm{\OQ_h^0Z_h}_{L^2(0,T)},
\end{align*}
which implies
\begin{equation*}
\frac{1}{2}\norm{\OQ_h^0 Z_h}_{L^2(0,T)} = \sup_{0\neq q_h\in S_h^0(0,T)}
\frac{\bb_0(z_h,q_h)}{\norm{q_h}_{L^2(0,T)}}.
\end{equation*}
If we had an estimate of the form $ \norm{\OQ_h^0 Z_h}_{L^2(0,T)} \geq c\norm{
Z_h}_{L^2(0,T)}$ for all $Z_h\in S_{h;0,}^1(0,T)$ with a constant $c>0$ that is
independent of the mesh size, together with \eqref{eq:H-1 to L2 norm} this would
give discrete inf-sup stability. However, it turns out that such an estimate
cannot hold. This becomes evident when considering the highly oscillatory
functions $$z_h(t) = \varphi_1^0(t)+2\sum_{\ell=2}^N (-1)^{\ell-1}\varphi_\ell^0
(t),$$ for which $\OQ_h^0Z_h$ almost vanishes, while $Z_h$ does not. We depict
this in Figure~\ref{fig:example_unstable}.

\begin{figure}[H]
  \centering
  \begin{tikzpicture}[scale=1]
    \def\h{0.8}
    \def\n{5}               
    \def\spacebetween{6}

    \begin{scope}
        \draw[->] (0, -2.5) -- (0, 2.5) node[above] {};
        \draw[->] (0, 0) -- ({\n*\h + 0.5}, 0) node[right] {$t$};
        \node at ({\n*\h/2}, 2.7) {\footnotesize $z_h(t)$};

        \foreach \y in {-2,-1,0,1,2} {
            \draw (-0.1,\y) -- (0.1,\y) node[left=2pt] {\footnotesize $\y$};
        }

        \foreach \i in {0,...,\n} {
            \draw[dashed, gray!50] ({\i*\h}, -2.5) -- ({\i*\h}, 2.5);
        }

        \def\valsConst{{1, -2, 2, -2, 2}}

        \foreach \i [evaluate=\i as \xleft using {\i*\h}, evaluate=\i as \xright using {(\i+1)*\h}] in {0,...,4} {
            \pgfmathparse{\valsConst[\i]}
            \let\yval\pgfmathresult

            \draw[thick, blue] (\xleft, \yval) -- (\xright, \yval);

            \ifnum\i<4
                \pgfmathparse{\valsConst[\i+1]}
                \let\ynext\pgfmathresult
                \draw[thick, blue] (\xright, \yval) -- (\xright, \ynext);
            \fi
        }

        \draw[<->, thick] (0, -2.3) -- (\h, -2.3) node[midway, below] {\footnotesize $h$};
    \end{scope}

    \begin{scope}[xshift=\spacebetween cm]
        \draw[->] (0, -1.5*\h) -- (0, 1.5*\h) node[above] {};
        \draw[->] (0, 0) -- ({\n*\h + 0.5}, 0) node[right] {$t$};
        \node at ({\n*\h/2}, 1.7*\h) {\footnotesize $Z_h(t)$};

        \foreach \k/\lab in {-1/-h,0/0,1/h} {
            \draw (-0.1,{\k*\h}) -- (0.1,{\k*\h}) node[left=2pt] {\footnotesize $\lab$};
        }

        \foreach \i in {0,...,\n} {
            \draw[dashed, gray!50] ({\i*\h}, -1.5*\h) -- ({\i*\h}, 1.5*\h);
        }

        \coordinate (prev) at (0,0);
        \foreach \i in {1,...,\n} {
            \pgfmathsetmacro{\x}{\i * \h}
            \pgfmathtruncatemacro{\sign}{mod(\i,2)==1 ? 1 : -1}
            \pgfmathsetmacro{\y}{\sign * \h}
            \draw[thick, red] (prev) -- (\x,\y);
            \coordinate (prev) at (\x,\y);
        }

        \draw[<->, thick] (0, -1.3*\h) -- (\h, -1.3*\h) node[midway, below] {\footnotesize $h$};
    \end{scope}

    \begin{scope}[xshift=2*\spacebetween cm ]
        \draw[->] (0, -\h) -- (0,\h) node[above] {};
        \draw[->] (0, 0) -- ({\n*\h + 0.5}, 0) node[right] {$t$};
        \node at ({\n*\h/2}, 1.5*\h) {\footnotesize $(\mathsf{Q}_h^0Z_h)(t)$};

        \foreach \k/\lab in {-0.5/{-h/2},0/0,0.5/{h/2}} {
            \draw (-0.1,{\k*\h}) -- (0.1,{\k*\h}) node[left=2pt] {\footnotesize $\lab$};
        }

        \foreach \i in {0,...,\n} {
            \draw[dashed, gray!50] ({\i*\h}, -\h) -- ({\i*\h}, \h);
        }

        \def\valsConst{{\h/2, 0, 0, 0, 0}}

        \foreach \i [evaluate=\i as \xleft using {\i*\h}, evaluate=\i as \xright using {(\i+1)*\h}] in {0,...,4} {
            \pgfmathparse{\valsConst[\i]}
            \let\yval\pgfmathresult

            \draw[thick, blue] (\xleft, \yval) -- (\xright, \yval);

            \ifnum\i<4
                \pgfmathparse{\valsConst[\i+1]}
                \let\ynext\pgfmathresult
                \draw[thick, blue] (\xright, \yval) -- (\xright, \ynext);
            \fi
        }

        \draw[<->, thick] (0, -\h) -- (\h, -\h) node[midway, below] {\footnotesize $h$};
    \end{scope}

\end{tikzpicture}
  \caption{Illustration of $z_h$, $Z_h$ and $\OQ_h^0Z_h$ where ``things go
  wrong''. }
  \label{fig:example_unstable}
\end{figure}

In the following, we will compute the exact lower bound, showing how the inf-sup
constant for this case depends on the mesh size.
\begin{theorem}\label{thm:discrete_inf-sup_instable_constants}
It holds that
\begin{equation}\label{eq:discrete inf-sup S0-S0}
c_0^{\OV}(h,T) := \sqrt{\frac{3}{8}\left(\frac{1-\cos\left(\tfrac{h\pi}{2T}\right)
}{2-\cos\left(\tfrac{h\pi}{2T}\right)}\right)} = \inf_{z_h\in S^0(\Sigma_h)}
\sup_{0\neq q_h\in S^0(\Sigma_h)}\frac{\bb_0(z_h,q_h)}{\norm{z_h}_{[H^1_{,0}
(\Sigma)]'}\norm{q_h}_{L^{2}(\Sigma)}}.
\end{equation}
In particular, we have that
\begin{equation*}
  c_0^{\OV}(h,T) = \sqrt{\frac{3}{64}}\pi \frac{h}{T} +o((h/T)^{3/2}) ,\quad
  h\to 0,
\end{equation*}
and the discrete inf-sup stability \eqref{eq:discrete-inf-sup-B} fails to hold
for a mesh independent constant.
\end{theorem}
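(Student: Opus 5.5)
We will prove the statement in the setting fixed just before it: one time slice ($T\le L$), one boundary part, and a uniform mesh with $h=T/N$. Here the off-diagonal part $\OV_{OD}$ vanishes and $\OV$ acts diagonally. By \eqref{eq:H-1 to L2 norm} and the computation preceding the theorem, the quantity in \eqref{eq:discrete inf-sup S0-S0} equals
\begin{equation*}
c_0^{\OV}(h,T)=\frac12\min_{0\neq Z_h\in S^1_{h;0,}(0,T)}\frac{\norm{\OQ_h^0Z_h}_{L^2(0,T)}}{\norm{Z_h}_{L^2(0,T)}} .
\end{equation*}
This holds because the supremum over $q_h$ is attained at $\OQ_h^0Z_h$, and the infimum is a minimum since the space is finite dimensional. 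So the task reduces to a finite-dimensional generalized eigenvalue problem in the nodal values $Z_0=0,Z_1,\dots,Z_N$.

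The first step writes both norms in nodal form. With $s_\ell:=Z_{\ell-1}+Z_\ell$ and $d_\ell:=Z_\ell-Z_{\ell-1}$ we have $(\OQ_h^0Z_h)|_{\tau_\ell}=s_\ell/2$, hence $\norm{\OQ_h^0Z_h}^2=\frac h4\sum_\ell s_\ell^2$. From $a^2+ab+b^2=\frac34(a+b)^2+\frac14(a-b)^2$ we get $\norm{Z_h}^2=\frac h4\sum_\ell s_\ell^2+\frac h{12}\sum_\ell d_\ell^2$. Therefore
\begin{equation*}
[c_0^{\OV}]^2=\frac14\Big(1+\tfrac13 R_{\max}\Big)^{-1},\qquad R_{\max}:=\max_{Z\neq0}\frac{\sum_{\ell=1}^N d_\ell^2}{\sum_{\ell=1}^N s_\ell^2}.
\end{equation*}
A short algebraic check with $c:=\cos(\tfrac{h\pi}{2T})=\cos(\tfrac{\pi}{2N})$ shows that the claimed formula is equivalent to $R_{\max}=\frac{1+c}{1-c}=\cot^2(\tfrac{\pi}{4N})$. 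Equivalently, $\min_Z \sum s_\ell^2/\sum d_\ell^2=\tan^2(\tfrac{\pi}{4N})$.

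The second step, which is the main obstacle, computes this extremal eigenvalue exactly. Setting $w_\ell=d_\ell$, so that $Z=C^{-1}w$ with $C$ the invertible lower-bidiagonal difference matrix, the quantity becomes the smallest singular value squared of $BC^{-1}$, where $B$ is the averaging matrix. Its entries are $s_\ell=w_\ell+2\sum_{j<\ell}w_j$. The substitution $u_\ell=(-1)^\ell Z_\ell$ interchanges the roles of $s$ and $d$. This confirms that the extremizer is the oscillatory mode of Figure~\ref{fig:example_unstable}, and it turns the problem into a tridiagonal pencil $(K+\text{rank-one boundary term},K)$. Here $K$ is the Dirichlet-at-$0$, free-at-$N$ difference Laplacian.

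To solve the pencil, we would try the trigonometric ansatz $Z_\ell=(-1)^\ell\sin(\ell\theta)$ in the interior three-term recurrence, which forces $\mu=\tan^2(\theta/2)$. The last row, corresponding to the free end at $t=T$, then yields a characteristic equation in $\theta$. We expect this equation to reduce to $\cos(N\theta)=0$ after using the half-angle identities, so the admissible values are $\theta_k=\frac{(2k-1)\pi}{2N}$. The extreme value corresponds to $\theta_1=\pi/(2N)$, which gives $\tan^2(\pi/(4N))$ as required. If the boundary row does not reduce this cleanly, the fallback is to work directly with the singular values of $BC^{-1}$. Since $(BC^{-1})^{-1}$ has a simple alternating-sign lower-triangular (Cayley-transform-like) structure, $(BC^{-1})^{-T}(BC^{-1})^{-1}$ should be computable as a tridiagonal Toeplitz-plus-corner matrix with known spectrum.

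Finally, the asymptotics follow by Taylor expansion. With $x=\frac{h\pi}{2T}$ we have $1-\cos x=\frac{x^2}{2}+O(x^4)$ and $2-\cos x=1+O(x^2)$, so $[c_0^{\OV}]^2=\frac3{16}x^2(1+O(x^2))$. This gives $c_0^{\OV}=\frac{\sqrt3}{4}\cdot\frac{\pi h}{2T}+O((h/T)^3)=\sqrt{\frac{3}{64}}\,\pi\frac hT+o((h/T)^{3/2})$. Since this tends to $0$ as $h\to0$, \eqref{eq:discrete-inf-sup-B} cannot hold with a mesh-independent constant. By the remark before the theorem, the failure on one slice carries over to $T>L$, because the extremal $z_h$ can be supported in the first time slice, where $\OV_{OD}$ does not act.
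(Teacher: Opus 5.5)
Your route is essentially the paper's, only repackaged. The splitting $\norm{Z_h}_{L^2(0,T)}^2=\frac h4\sum_\ell s_\ell^2+\frac h{12}\sum_\ell d_\ell^2$ is the matrix identity $\OOM_h^{1,1}=\frac h4A+\frac h{12}K$, and $\OOB_h^\top(\OOM_h^{0,0})^{-1}\OOB_h=\frac h{16}A$. Here $A$ and $K$ are the tridiagonal matrices with stencils $(1,2,1)$ and $(-1,2,-1)$ and last diagonal entry $1$. Your mode $Z_\ell=(-1)^\ell\sin(\ell\pi/(2N))$ is, up to sign, the paper's minimizing eigenvector $\underline v_N$ with $\theta_N=\pi-\frac{\pi}{2N}$.

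The boundary row you left as an expectation does close. For your ansatz, the last row of $AZ=\mu KZ$ reads $\sin N\theta-\sin(N-1)\theta=\tan^2(\tfrac\theta2)\bigl(\sin N\theta+\sin(N-1)\theta\bigr)$. After sum-to-product this becomes $\cos\bigl((N-\tfrac12)\theta\bigr)\cos\tfrac\theta2=\sin\bigl((N-\tfrac12)\theta\bigr)\sin\tfrac\theta2$, i.e.\ $\cos(N\theta)=0$. The $N$ distinct values $\theta_k=\frac{(2k-1)\pi}{2N}\in(0,\pi)$ give $N$ distinct eigenvalues $\tan^2(\theta_k/2)$ of an $N\times N$ definite pencil. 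So the spectrum is exhausted, and since $\tan^2(\theta/2)$ is increasing on $(0,\pi)$, the minimum is at $\theta_1$. You should state this completeness argument explicitly.

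Three side remarks are inaccurate, though none is needed. First, the first matrix of the pencil is $A=4I-2e_Ne_N^\top-K$, i.e.\ a multiple of the identity minus $K$ plus a boundary term, not $K$ plus a rank-one term. Second, the Gram matrix in your fallback, $B^{-\top}KB^{-1}$, is not tridiagonal; for $N=3$ it is already full. Third, the extension to $T>L$ does not follow as you phrase it. A $z_h$ supported in the first slice still produces $\OV_{OD}z_h\neq0$ in the second slice, and for your extremal mode $\OV_{00}z_h\equiv\frac12Z_h(L)\neq0$ for $t>L$. The paper, too, only proves the one-slice case.
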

\proof{
The discrete inf-sup constant characterized by \eqref{eq:discrete inf sup unstable}
can be computed as $c_0^{\OV} = \sqrt{\lambda_{\text{min}}}$, where $\lambda_{
\text{min}}$ denotes the minimal eigenvalue of
\begin{equation}\label{eq:generalized EVP inf-sup}
  \OOB_h^\top (\OOM_h^{0,0})^{-1}\OOB_h \underline{v} = \lambda \OOM_h^{1,1}
  \underline{v},
\end{equation}
with matrices 
\begin{equation*}
\OOB_h[i,j] = \frac{1}{2}\int_0^T \varphi_j^1(t)\varphi_i^0(t)\, dt, \quad
\OOM_h^{0,0}[i,j] = h\delta_{ij},\quad \OOM_h^{1,1}[i,j] = \int_0^T
\varphi_j^1(t)\varphi_i^1(t)\, dt,
\end{equation*}
for $i,j=1,\ldots,N$, see \cite[Remark 3.159]{John:2016}. Here $\delta_{ij}$
denotes the Kronecker delta. For the case at hand, we have that
\begin{equation*}
  \OOB_h=\frac{h}{4}
  \begin{pmatrix}
    1 & 0 & 0 & \cdots & 0\\
    1 & 1 & 0 & \cdots & 0\\
    0 & 1 & 1 & \ddots & \vdots\\
    \vdots & \ddots & \ddots & \ddots & 0\\
    0 & \cdots & 0 & 1 & 1
  \end{pmatrix}
  ,
  \quad 
  \OOM_h^{1,1}=\frac{h}{6}
  \begin{pmatrix}
    4 & 1 & 0 & \cdots & 0\\
    1 & 4 & 1 & \ddots & \vdots\\
    0 & 1 & 4 & \ddots & 0\\
    \vdots & \ddots & \ddots & \ddots & 1\\
    0 & \cdots & 0 & 1 & 2
  \end{pmatrix}.
\end{equation*}
Now, consider 
\begin{equation*}
  \theta_k = \frac{1}{N}\left(k\pi -\frac{\pi}{2}\right),\quad k=1,\ldots,N,
\end{equation*} 
and the vector $\underline{v}_k\in\mathbb{R}^N$ with entries
\begin{equation*}
  \underline{v}_k[j] = \sin(j\theta_k),\quad j=1,\ldots,N.  
\end{equation*}
Let us slightly abuse notation and write $\underline{v}_k[0] = \sin(0)=0$.
Then, employing the trigonometric relation
\begin{equation*}
  \sin((j-1)\theta_k) + \sin((j+1)\theta_k) = 2\cos(\theta_k)\sin(j\theta_k), 
\end{equation*}
we compute for $j=1,\ldots,N-1$ that
\begin{equation*}
  (\OOM_h^{1,1}\underline{v}_k)[j] = \frac{h}{6}\left(\underline{v}_k[j-1] +
  4\underline{v}_k[j] + \underline{v}_k[j+1]\right) = \frac{h}{6}\left(2\cos(
  \theta_k) + 4\right)\underline{v}_k[j].
\end{equation*}
Using that $N\theta_k = k\pi-\tfrac{\pi}{2}$, we get for $j=N$ that
\begin{equation*}
  \underline{v}_k[N-1] = \sin(N\theta_k-\theta_k) = \sin(N\theta_k)\cos(
  \theta_k) - \cos(N\theta_k)\sin(\theta_k) = \sin(N\theta_k)\cos(\theta_k),
\end{equation*} 
and thus 
\begin{equation*}
  (\OOM_h^{1,1}\underline{v}_k)[N] = \frac{h}{6}\left(\underline{v}_k[N-1]
  + 2\underline{v}_k[N]\right) = \frac{h}{6}\left(\cos(\theta_k) + 2\right)
  \underline{v}_k[N].
\end{equation*}
So, altogether, we obtain
\begin{equation*}
\OOM_h^{1,1}\underline{v}_k = \frac{h}{6}(2+\cos(\theta_k))\begin{pmatrix}
  2\underline{v}_k[1] \\
  \vdots\\
  2\underline{v}_k[N-1]\\
  \underline{v}_k[N]
  \end{pmatrix}.
\end{equation*}
Noting that 
\begin{equation*}
  \OOB_h^\top \OOB_h = \frac{h^2}{16}
\begin{pmatrix}
2 & 1 & 0 & \cdots & 0\\
1 & 2 & 1 & \ddots & \vdots\\
0 & 1 & 2 & \ddots & 0\\
\vdots & \ddots & \ddots & \ddots & 1\\
0 & \cdots & 0 & 1 & 1
\end{pmatrix},  
\end{equation*}
we can analogously compute 
\begin{equation*}
  \OOB_h^\top (\OOM_h^{0,0})^{-1}\OOB_h \underline{v}_k = \frac{h}{16}(1+\cos(
  \theta_k)) \begin{pmatrix}
    2\underline{v}_k[1]\\
    \vdots\\
    2\underline{v}_k[N-1]\\
    \underline{v}_k[N]
  \end{pmatrix},
\end{equation*}
and thus observe that $\underline{v}_k$ is an eigenvector of
\eqref{eq:generalized EVP inf-sup} with eigenvalue
\begin{equation*}
  \lambda_k= \frac{3}{8}\left(\frac{1+\cos(\theta_k)}{2+\cos(\theta_k)}\right).
\end{equation*}
Furthermore, the minimal eigenvalue is attained at $k=N$. It holds that
$\theta_N= \pi-\frac{\pi}{2N}$ and therefore
\begin{equation*}
  \cos(\theta_N) = -\cos\left(\frac{\pi}{2N}\right),
\end{equation*} 
from which we conclude that
\begin{equation*}
\lambda_{\text{min}} = \frac{3}{8}\left(\frac{1-\cos\left(\frac{\pi}{2N}\right)
}{2-\cos\left(\frac{\pi}{2N}\right)}\right).
\end{equation*}
Last but not least, we use the expansion $\cos(x) = 1-\frac{x^2}{2} + o(x^3)$
to obtain the asymptotic behavior, recall $h=T/N$
\begin{equation*}
\lambda_{\text{min}} = \frac{3}{8}\left(\frac{\frac{\pi^2}{8N^2}}{1+\frac{\pi^2
}{8N^2}}\right) + o(N^{-3}) = \frac{3\pi^2}{8(8N^2 + \pi^2)} + o(N^{-3}) =
\frac{3\pi^2}{64}\frac{h^2}{T^2} + o((h/T)^{3}).
\end{equation*}
}

\subsubsection{Why someone more refined could be the answer: The stable pair
$S^0(\Sigma_h) - S^0(\Sigma_{h/m})$, $m\in\mathbb{N}_{\geq 2}$}
\label{sssec:stable_LS}
Evidently, the issue of the last section can be resolved when the test space
$Y_h$ is able to resolve linear functions ``accurately enough''. In \cite{HLS23},
we already proved that piecewise constants on a finer mesh are sufficient to
do so. More precisely, let $\Sigma_h$ be a given mesh and set $X_h =S^0(
\Sigma_h)$. Furthermore, define $Y_h = S^0(\Sigma_{h/m})$, where $m\in
\mathbb{N}$ and $\Sigma_{h/m}$ denotes the mesh obtained by uniformly refining
each element in $\Sigma_{h}$ into $m$ elements of mesh size $h/m$. We consider
the variational formulation: Find $z_h\in S^0(\Sigma_h)$ such that
\begin{equation}\label{eq:DVF-with-m}
\bb_{0,m}(z_h,q_{h/m}) := \dual{\OV z_h}{q_{h/m}}_{\Sigma} = \dual{g}{q_{h/m}
}_{\Sigma},\quad \forall q_{h/m}\in S^0(\Sigma_{h/m}).
\end{equation}
Then, by \cite[Theorem 3.2]{HLS23}, \eqref{eq:DVF-with-m} admits a unique solution,
if $m\geq 3$. The following result shows that actually $m\geq 2$ is sufficient
and gives a sharp discrete inf-sup constant.
\begin{theorem}\label{thm:discrete inf sup m}
Let $m\in\mathbb{N}$, $m\geq 2$ and let $n \in\mathbb{N}$ denote the number of
time slices $T_j$ as defined in \eqref{eq:timeslices}. Moreover, let Assumption
\ref{Assumption:mesh} on the mesh hold true. Then
\begin{equation*}
c_{0}^{\OV}(m,n):= \alpha_{\text{is}}^{\OV}(n) \left(1-\frac{1}{m^2}\right)^{1/2}
\leq\inf_{z_h\in S^0(\Sigma_h)}\sup_{0\neq q_{h/m}\in S^0(\Sigma_{h/m})}\frac{
\bb_{0,m}(z_h,q_{h/m})}{\norm{z_h}_{[H^1_{,0}(\Sigma)]'}\norm{q_{h/m}}_{L^{2}(
\Sigma)}}.
\end{equation*} 
\end{theorem}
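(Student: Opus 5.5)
Under Assumption~\ref{Assumption:mesh}, the proof of Theorem~\ref{thm:YopV} shows that the optimal test space is $Y_h^{\text{opt}}=\OV(S^0(\Sigma_h))=S^1_{0,}(\Sigma_h)$. We use this to split the discrete inf-sup constant into two factors: the continuous constant $\alpha_{\text{is}}^{\OV}(n)$, and a purely local approximation constant measuring how well $S^0(\Sigma_{h/m})$ sees piecewise linears.

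Fix $z_h\in S^0(\Sigma_h)$ and set $\bar q_h:=\OV z_h\in S^1_{0,}(\Sigma_h)$. Following the proof of Theorem~\ref{thm:optimal setting main results} with $\OR_Y=\OI$, we get
\begin{equation*}
\alpha_{\text{is}}^{\OV}(n)\norm{z_h}_{[H^1_{,0}(\Sigma)]'}\le \norm{\bar q_h}_{L^2(\Sigma)}.
\end{equation*}
Now let $\OQ^0_{h/m}$ be the $L^2$-projection onto $S^0(\Sigma_{h/m})$. As in the unstable case, the supremum over $q_{h/m}\in S^0(\Sigma_{h/m})$ equals exactly $\norm{\OQ^0_{h/m}\bar q_h}_{L^2(\Sigma)}$. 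It therefore suffices to prove
\begin{equation*}
\norm{\OQ^0_{h/m} w}_{L^2(\Sigma)}^2\ge \left(1-\frac{1}{m^2}\right)\norm{w}_{L^2(\Sigma)}^2\qquad\text{for all } w\in S^1(\Sigma_h),
\end{equation*}
even for discontinuous piecewise linears. Both sides are sums over coarse elements $\tau$, so the estimate localizes elementwise. Continuity is not needed, which is convenient: the two boundary parts and the time slices decouple.

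On a single element, after rescaling to $(0,1)$, write $w=a+b\,\psi$ with $\psi(t)=\sqrt{3}(2t-1)$. This is the $L^2$-orthonormal decomposition into the constant part and the linear Legendre mode. The constant is reproduced exactly by $\OQ^0_{1/m}$. For the linear mode, on each subinterval of length $1/m$ the projection is the midpoint value, so $\OQ^0_{1/m}\psi$ remains orthogonal to constants, and
\begin{equation*}
\norm{\psi-\OQ^0_{1/m}\psi}^2=\sum_{i=1}^m \frac{3\cdot 4}{12 m^3}=\frac{1}{m^2}.
\end{equation*}
Each subinterval contributes the variance of a linear function with slope $2\sqrt3$ over length $1/m$, namely $12\cdot (1/m)^2/12\cdot(1/m)$. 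Hence $\norm{\OQ^0_{1/m}\psi}^2=1-1/m^2$. The cross terms vanish, since $\OQ^0_{1/m}$ is self-adjoint, fixes constants, and maps $\psi$ into the orthogonal complement of constants. This gives $\norm{\OQ^0_{1/m}w}^2=a^2+b^2(1-1/m^2)\ge(1-1/m^2)\norm{w}^2$.

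Chaining the inequalities yields
\begin{equation*}
\sup_{q_{h/m}}\frac{\bb_{0,m}(z_h,q_{h/m})}{\norm{q_{h/m}}_{L^2(\Sigma)}}\ge\left(1-\frac{1}{m^2}\right)^{1/2}\alpha_{\text{is}}^{\OV}(n)\norm{z_h}_{[H^1_{,0}(\Sigma)]'},
\end{equation*}
which is the claim. The main point to verify carefully is that the optimal-test-space identity $\OV(S^0(\Sigma_h))\subset S^1(\Sigma_h)$ really holds on the coarse mesh under Assumption~\ref{Assumption:mesh}, including the cross-boundary terms. This is precisely Theorem~\ref{thm:YopV}. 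The rest is the elementary local computation, which also shows sharpness by taking $w$ equal to a single linear mode $\psi$ on an element.
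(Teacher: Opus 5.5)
Your proof is correct, but it takes a different route from the paper.

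The paper proceeds in four steps:
\begin{itemize}
\item It splits $\OV=\OV_D+\OV_{OD}$ and uses Assumption~\ref{Assumption:mesh} to carry the shifting relation \eqref{eq:shifting-time-slices} over to the projected pieces $\OQ_{h/m}^0\OV_D z_h$ and $\OQ_{h/m}^0\OV_{OD} z_h$.
\item It bounds the cross terms by Cauchy--Schwarz.
\item Using the smallest eigenvalue of the tridiagonal matrix $\OOA$, it obtains $\norm{\OQ_{h/m}^0\OV z_h}_{L^2(\Sigma)}^2\ge 4\alpha_{\text{is}}^{\OV}(n)^2\norm{\OQ_{h/m}^0\OV_D z_h}_{L^2(\Sigma)}^2$.
\item Only then does it apply the local $2\times 2$ eigenvalue estimate, to $\OV_D z_h=Z_h/2$, which is piecewise linear on any mesh, and it closes with \eqref{eq:norm-via-diagonal-operator}.
\end{itemize}

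You instead use the mesh assumption once, through Theorem~\ref{thm:YopV}, to know that $\OV z_h$ itself is piecewise linear on the coarse mesh. You then apply the same local estimate directly to $\OV z_h$ and import $\alpha_{\text{is}}^{\OV}(n)$ as a black box from the continuous inf-sup condition \eqref{eq:inf-sup-optimal}. Your Legendre-mode computation is equivalent to the paper's generalized eigenvalue problem, whose eigenvalues are $1$ and $1-1/m^2$.

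Your route buys brevity and modularity. It avoids redoing the time-slice coupling argument, which the paper essentially re-derives at the projected level. It also exhibits a general principle: the discrete inf-sup constant is at least $\alpha_{\text{is}}^{\OV}(n)\cdot\inf_{0\neq y\in Y_h^{\text{opt}}}\norm{\OP_{Y_h}y}_{L^2(\Sigma)}/\norm{y}_{L^2(\Sigma)}$. This is in line with $Y_h^{\text{prac}}=\OP_{Y_h}(Y_h^{\text{opt}})$. The paper's route instead keeps the approximation step on $\OV_D z_h$, which needs no mesh condition, and isolates the use of Assumption~\ref{Assumption:mesh} in the discrete shifting identity alone.

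One caveat: your closing sharpness remark only shows that the local factor $1-1/m^2$ is attained. It does not by itself show that the product constant $c_0^{\OV}(m,n)$ is sharp.
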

\proof{
First, let us proceed as in the proof of \cite[Theorem 2.3]{HLS23}, in order to
reduce the discussion to a single time slice. Therefore, first note that, by
self-duality of $L^2(\Sigma)$, we have that
\begin{equation*}
\sup_{0\neq q_{h/m}\in S^0(\Sigma_{h/m})} \frac{\bb_{0,m}(z_h,q_{h/m})}{\norm
{q_{h/m}}_{L^2(\Sigma)}} 
= \sup_{0\neq q_{h/m}\in S^0(\Sigma_{h/m})} \frac{
\dual{\OV z_h}{q_{h/m}}_{\Sigma}}{\norm{q_{h/m}}_{L^2(\Sigma)}} 
= \norm{
\OQ_{h/m}^0 \OV z_h}_{L^2(\Sigma)},
\end{equation*}
where $\OQ_{h/m}^0:L^2(\Sigma)\to S^0(\Sigma_{h/m})$ denotes the $L^2$-projection
onto piecewise constants on $\Sigma_{h/m}$. Now, splitting the weakly singular
operator $\OV$ into diagonal and off-diagonal parts as in \eqref{eq:decouplingV},
gives
\begin{align*}
\norm{\OQ_{h/m}^0\OV z_h}_{L^2(\Sigma)}^2 
&= \norm{\OQ_{h/m}^0(\OV_D + \OV_{OD}
)z_h}_{L^2(\Sigma)}^2\\
& = \norm{\OQ_{h/m}^0\OV_D z_h}_{L^2(\Sigma)}^2 + \norm{\OQ_{h/m}^0 \OV_{OD}z_h
}_{L^2(\Sigma)}^2 \\
  & + 2\dual{\OQ_{h/m}^0 \OV_D z_h}{\OQ_{h/m}^0 \OV_{OD}z_h}_{\Sigma}.
\end{align*}
Due to Assumption \ref{Assumption:mesh} on the mesh, the relation
\eqref{eq:shifting-time-slices} carries over to the discrete setting, i.e.,
\begin{equation*}
\norm{\OQ_{h/m}^0 \OV_{OD}z_h}_{L^2(\Sigma_j)} = \norm{\OQ_{h/m}^0\OV_D z_h}_{
L^2(\Sigma_{j-1})},\quad \forall j = 2,\ldots,n,
\end{equation*} 
and thus 
\begin{align*}
&\norm{\OQ_{h/m}^0\OV z_h}_{L^2(\Sigma)}^2 \\
&= \sum_{j=1}^n\norm{\OQ_{h/m}^0\OV_D z_h}_{L^2(\Sigma_j)}^2 + \sum_{j=2}^n\norm{
\OQ_{h/m}^0 \OV_{OD}z_h}_{L^2(\Sigma_j)}^2 
+ 2\sum_{j=2}^n\dual{\OQ_{h/m}^0 \OV_D
z_h}{\OQ_{h/m}^0 \OV_{OD}z_h}_{\Sigma_j}\\
&\geq \sum_{j=1}^n \norm{\OQ_{h/m}^0\OV_D z_h}_{L^2(\Sigma_j)}^2 
+ \sum_{j=2}^n
\norm{\OQ_{h/m}^0 \OV_D z_h}_{L^2(\Sigma_{j-1})}^2 \\
&\hphantom{\geq \sum_{j=1}^n \norm{\OQ_{h/m}^0\OV_D z_h}_{L^2(\Sigma_j)}^2} - 2\sum_{j=2}^n \norm{\OQ_{
h/m}^0 \OV_D z_h}_{L^2(\Sigma_j)}\norm{\OQ_{h/m}^0 \OV_D z_h}_{L^2(\Sigma_{j-1})}.
\end{align*}
Introducing the vector $\underline{v}[j] = \norm{\OQ_{h/m}^0 \OV_D z_h}_{L^2(
\Sigma_j)}$, $j=1,\ldots,n$, this is equivalent to
\begin{equation}
\norm{\OQ_{h/m}^0 \OV z_h}_{L^2(\Sigma)}^2 \geq \underline{v}^\top \cdot \OOA \underline{v}
\end{equation}
where 
\begin{equation*}
  \OOA =
  \begin{pmatrix}
    2 & -1 & 0 & \cdots & 0\\
    -1 & 2 & -1 & \ddots & \vdots\\
    0 & -1 & 2 & \ddots & 0\\
    \vdots & \ddots & \ddots & \ddots & -1\\
    0 & \cdots & 0 & -1 & 1
  \end{pmatrix},
\end{equation*} 
which is exactly the stiffness matrix for piecewise linear, globally
continuous functions with homogeneous Dirichlet datum on the left boundary, when
using Lagrange finite elements and $h=1$.
Its minimal eigenvalue is known to be $\lambda_{\text{min}}(\OOA) = 4\sin\left(
\frac{\pi}{2(2n+1)}\right)^2 = 4\alpha_{\text{is}}^{\OV}(n)^2$ and thus
\begin{align*}
\norm{\OQ_{h/m}^0 \OV z_h}_{L^2(\Sigma)}^2 \geq \lambda_{\text{min}}(\OOA)
\underline{v}^\top\cdot \underline{v} &= 4\alpha_{\text{is}}^{\OV}(n)^2 \sum_{i=1
}^n \norm{\OQ_{h/m}^0 \OV_D z_h}_{L^2(\Sigma_i)}^2 \\
&= 4\alpha_{\text{is}}^{\OV}(n)^2 \norm{\OQ_{h/m}^0 \OV_D z_h}_{L^2(\Sigma)}^2.
\end{align*}
Now, since the diagonal part $\OV_D$ of the operator acts either on the left
boundary $\Sigma^0:=\lbrace x=0\rbrace\times(0,T)$ or on the right boundary
$\Sigma^L:=\lbrace x=L\rbrace\times(0,T)$, no coupling occurs. Therefore, it is
sufficient to consider the stability on one time slice, i.e., to assume $T\leq L$.
This, in turn, is guaranteed if stability holds on an interval $(0,T)$,
representing one of the boundary parts $\Sigma^0$ or $\Sigma^L$ (as one would
proceed analogously for the other part).

Recalling from \eqref{eq:norm-via-diagonal-operator} that $\norm{\OV_D z_h}_{L^2
(0,T)}^2 = \frac{1}{4}\norm{z_h}_{[H^1_{,0}(0,T)]'}^2$, it suffices to show an
estimate of the form
\begin{equation*}
  \norm{\OQ_{h/m}^0 \OV_D z_h}_{L^2(0,T)}^2 \geq c(m)^2\norm{\OV_D z_h}_{L^2(
  0,T)}^2. 
\end{equation*}
We note that we can show this if for each element $\tau_\ell = (t_{\ell-1},t_\ell
) \in \Sigma_h$ the local relation
\begin{equation}\label{eq:auxiliary-local-relation}
  \norm{\OQ_{h/m}^0 \OV_D z_h}_{L^2(\tau_\ell)}^2 \geq c_{\tau_\ell}(m)^2 \norm{
  \OV_D z_h}_{L^2(\tau_\ell)}^2,\quad \ell=1,\ldots,N,
\end{equation}
holds. This is because in that case, for $c(m): = \min_{\ell=1,\ldots,N}c_{
\tau_\ell}(m)$, we get
\begin{align*}
\norm{\OQ_{h/m}^0 \OV_D z_h}_{L^2(0,T)}^2 &= \sum_{\ell=1}^N \norm{\OQ_{h/m}^0
\OV_D z_h}_{L^2(\tau_\ell)}^2 \\
&\geq c(m)^2\sum_{i=1}^N \norm{\OV_D z_h}_{L^2(\tau_i)}^2
= c(m)^2\norm{\OV_D z_h}_{L^2(0,T)}^2.
\end{align*}
Considering the function $Z_h(t) = \int_0^t z_h(s)\, ds = 2(\OV z_h)(t)$, which
satisfies $Z_h\in S_{h;0,}^1(0,T)$, and using the self-duality of $L^2$, we can rewrite the
local relation \eqref{eq:auxiliary-local-relation} as
\begin{equation*}
c_{\tau_\ell}(m)\norm{Z_h}_{L^2(\tau_\ell)} \leq \norm{\OQ_{h/m}^0 Z_h}_{L^2(
\tau_\ell)} \leq \sup_{0\neq q_{h/m}\in S_{h/m}^0(\tau_\ell)} \frac{\int_{\tau_\ell} Z_h(t)
q_{h/m}(t)\, dt }{\norm{q_{h/m}}_{L^2(\tau_\ell)}}.
\end{equation*}
This is a local discrete inf-sup condition, and $c_{\tau_\ell}(m)$ is given as
the square root of the minimal $\lambda$ in the following generalized eigenvalue
problem
\begin{equation*}
(\OOM_{\tau_\ell,m}^{0,1})^\top (\OOM_{\tau_\ell,h/m}^{0,0})^{-1} \OOM_{
\tau_\ell,m}^{0,1} \underline{v} = \lambda \OOM_{\tau_\ell}^{1,1} \underline{v},
\end{equation*}
with matrices 
\begin{equation*}
\OOM_{\tau_\ell,m}^{0,1}[k,i] = \int_{\tau_\ell} \varphi_i^1(t)\varphi_k^{0,m}
(t)\,dt,\quad \OOM_{\tau_\ell,h/m}^{0,0}[r,k] = \frac{h_\ell}{m}\delta_{r,k},
\quad \OOM_{\tau_\ell}^{1,1}[i,j] = \int_0^T \varphi_j^1(t)\varphi_i^1(t)\, dt,
\end{equation*}
for $i,j=1,2$ and $k,r=1,\ldots,m$, where 
\begin{equation*}
\varphi_1^1(t)=\frac{t_\ell-t}{h_\ell},\quad \varphi_2^1(t)=\frac{t-t_{\ell-1}
}{h_\ell},\quad \varphi_k^{0,m}(t) = \begin{cases}
1, & t\in \left(t_{\ell-1} + \frac{h_\ell}{m}(k-1),
t_{\ell-1} + \frac{h_\ell}{m}k\right),\\
0, & \text{else}.
  \end{cases}
\end{equation*} 
By simple computations, we obtain
\begin{equation*}
\OOM_{\tau_\ell,m}^{0,1} = \frac{h_\ell}{2m^2}
\begin{pmatrix}
2m-1 & 1\\
2m-3 & 3\\
2m-5 & 5\\
\vdots & \vdots\\
3 & 2m-3\\
1 & 2m-1
\end{pmatrix},\quad \OOM_{\tau_\ell}^{1,1} = \frac{h_\ell}{6}\begin{pmatrix}
  2 & 1\\ 1 &2
\end{pmatrix}.
\end{equation*}
Then, using that
\begin{equation*}
\sum_{k=1}^m (2m-2k+1)^2 = \frac{m(4m^2-1)}{3},\quad \text{ and }\quad
\sum_{k=1}^m (2m-2k+1)(2k-1) = \frac{m(2m^2+1)}{3},
\end{equation*}
we get 
\begin{equation*}
(\OOM_{\tau_\ell,m}^{0,1})^\top (\OOM_{\tau_\ell,h/m}^{0,0})^{-1} \OOM_{\tau_\ell
,m}^{0,1} = \frac{h_\ell}{12m^2}\begin{pmatrix}
    4m^2-1 & 2m^2+1\\ 2m^2+1 & 4m^2-1
  \end{pmatrix}.
\end{equation*}
Moreover, we have 
\begin{align*}
(\OOM_{\tau_\ell}^{1,1})^{-1} (\OOM_{\tau_\ell,m}^{0,1})^\top (\OOM_{\tau_\ell,
h/m}^{0,0})^{-1} \OOM_{\tau_\ell,m}^{0,1} &= \frac{2}{h_\ell} \begin{pmatrix}
    2 & -1 \\ -1 & 2
  \end{pmatrix}\frac{h_\ell}{12 m^2}\begin{pmatrix}
    4m^2-1 & 2m^2+1\\ 2m^2+1 & 4m^2-1
  \end{pmatrix}\\
  & = \frac{1}{6m^2}\begin{pmatrix}
    6m^2-3 & 3\\ 3& 6m^2-3
  \end{pmatrix},
\end{align*}
for which the minimal eigenvalue is given by
\begin{equation*}
  c_{\tau_\ell}(m)^2 = \lambda_{\text{min},\tau_\ell} = 1-\frac{1}{m^2}. 
\end{equation*}
Since the eigenvalue does not depend on the element $\tau_\ell$, we have that
$c(m)^2 = 1-\frac{1}{m^2}$, which completes the proof.
}

The discrete inf-sup condition therefore holds uniformly for the choice $Y_h =
S^0(\Sigma_{h/m})$, $m\geq 2$, yielding a stable discrete formulation. However,
we have that $\dim(S^0(\Sigma_h)) \neq \dim(S^0(\Sigma_{h/m}))$,
which is inconvenient for numerical methods. As mentioned in
Remark~\ref{rmk:Equivalence}, we circumvent this difficulty by using that the
variational problem is equivalent to the saddle point formulation
\eqref{eq:SPF-practical} and therefore implement the latter. On the one hand,
this leads to a larger number of degrees of freedom. On the other hand, this
formulation also provides the additional variable $p_h$, which can be used as
an error estimator.

\subsubsection{Choose an integrated version of yourself: The stable pair
$S^0(\Sigma_h) - S_{0,}^1(\Sigma_h)$}\label{sssec:stab01}
For completeness, we conclude this section by reminding the reader that
Theorem~\ref{thm:YopV} showed that under the mesh Assumption~\ref{Assumption:mesh},
we get that the trial space $X_h = S^0(\Sigma_h)$ has the optimal test space
given by $Y_h^{\text{opt}} = S^1_{0,}(\Sigma_h)$. Hence, this combination also
constitutes a discrete inf-sup stable pair.

\section{Numerical experiments}
\label{sec:numerics}
In this Section we investigate the sharpness of our results. Since the instability
of the pair $S^0(\Sigma_h) - S^0(\Sigma_h)$ has already been reported in
\cite{Gua10}, we focus on the two stable pairs mentioned in
subsections~\ref{sssec:stable_LS} and \ref{sssec:stab01}.

Our code is implemented in \textsc{Python}. We use exact integration to obtain
the entries of the Galerkin matrices. We compute the discrete inf-sup constants
following the approach in \cite[Rem.~3.159]{John:2016}. Moreover, for this we
calculate the $L^2(\Sigma)$-norm as usual and the $\Dual{H^1_{,0}(\Sigma)}$-norm
following \cite[subsection~4.2]{HLS23}. For all experiments, we consider $L=3$,
while $T$ is specified for each case.

\subsection{The stable pair $S^0(\Sigma_h) - S_{0,}^1(\Sigma_h)$}
\label{ssec:results_Vpwc_pwl}
As proposed in Theorem~\ref{thm:YopV}, the following Galerkin discretisation is
expected to yield stable results: For $g_D \in L^2(\Sigma)$, find $z_h \in
S^0(\Sigma_h)\subset \Dual{H_{,0}^1(\Sigma)}$ such that
\begin{equation}\label{eq:Vpwc_pwl}
  \dual{\OV z_h}{v_h}_{L^2(\Sigma)} = \dual{g_D}{v_h}_{L^2(\Sigma)}, \quad
  \forall v_h \in S^1_{0,}(\Sigma_h).
\end{equation}

\begin{figure}[h]
\centering
\begin{subfigure}{0.475\textwidth}
\begin{tikzpicture}

\shade[left color=white,right color=lightgray] (0,0) rectangle (3,3) node[pos=.5] {$T_1$};
\shade[left color=lightgray,right color=white] (0,3) rectangle (3,6) node[pos=.5] {$T_2$};
\shade[left color=white,right color=lightgray] (0,6) rectangle (3,{2*pi}) node[pos=.5] {};

\draw[black] (0,0)--(0,{2*pi});
\draw[black] (3,0)--(3,{2*pi});

\def\leftTimes{ 0.00, 0.18, 0.55, 1.11, 1.66, 2.03, 2.22, 2.59,
    3.14, 3.33, 3.70, 4.25, 4.62, 4.80, 5.36, 5.73,{2*pi}}
\def\rightTimes{0.00, 0.37, 0.92, 1.11, 1.48, 2.03, 2.40, 2.59,
    3.14, 3.51, 3.70, 4.25, 4.62, 5.17, 5.36, 5.73, {2*pi}}

\foreach \t in \leftTimes {
    \draw (0,\t) node[circle, fill=black, scale=0.1, draw=black] {};
}

\foreach \t in \rightTimes {
    \draw (3,\t) node[circle, fill=black, scale=0.1, draw=black] {};
}

\draw[dotted] (-1.5,0)  -- (0,0) node[pos=-0.5] {$t=0$};
\draw[dotted] (-1.5,3)  -- (0,3) node[pos=-0.5] {$t=L$};
\draw[dotted] (-1.5,6)  -- (0,6) node[pos=-0.5] {$t=2L$};

\draw[dotted] (0,-1)  -- (0,0) node[pos=-0.3] {$\Sigma_h^0$};
\draw[dotted] (3,-1)  -- (3,0) node[pos=-0.3] {$\Sigma_h^L$};

\end{tikzpicture}
\caption{Case with $T\neq nL$ for
some $n\in \mathbb{N}$.}
\label{fig:initial_mesh}
\end{subfigure} \hfill
\begin{subfigure}{0.475\textwidth}
 \begin{tikzpicture}

\shade[left color=white,right color=lightgray] (0,0) rectangle (3,3) node[pos=.5] {$T_1$};
\shade[left color=lightgray,right color=white] (0,3) rectangle (3,6) node[pos=.5] {$T_2$};
\shade[left color=white,right color=lightgray] (0,6) rectangle (3,{2*pi}) node[pos=.5] {};

\draw[black] (0,0)--(0,6);
\draw[black] (3,0)--(3,6);
\draw[black, dashed] (0,6)--(0,{2*pi});
\draw[black, dashed] (3,6)--(3,{2*pi});

\def\leftTimes{0.00, 0.18, 0.55, 1.11, 1.66, 2.03, 2.22, 2.59,
    3, 3.18, 3.55, 4.11, 4.66, 5.03, 5.22, 5.59,6}
\def\rightTimes{0.00, 0.37, 0.92, 1.11, 1.48, 2.03, 2.40, 2.59,
    3,3.37, 3.92, 4.11, 4.48, 5.03, 5.40, 5.59,6}

\foreach \t in \leftTimes {
    \draw (0,\t) node[circle, fill=black, scale=0.1, draw=black] {};
}

\foreach \t in \rightTimes {
    \draw (3,\t) node[circle, fill=black, scale=0.1, draw=black] {};
}

\draw[dotted] (-1.5,0)  -- (0,0) node[pos=-0.5] {$t=0$};
\draw[dotted] (-1.5,3)  -- (0,3) node[pos=-0.5] {$t=L$};
\draw[dotted] (-1.5,6)  -- (0,6) node[pos=-0.5] {$t=2L$};

\draw[dotted] (0,-1)  -- (0,0) node[pos=-0.3] {$\Sigma_h^0$};
\draw[dotted] (3,-1)  -- (3,0) node[pos=-0.3] {$\Sigma_h^L$};

\end{tikzpicture}%
\caption{Case with $T=nL$ for
some $n\in \mathbb{N}$.}
\label{fig:initial_time_mesh}
\end{subfigure}
\caption{Initial non-uniform meshes that do not satisfy
Assumption~\ref{Assumption:mesh}.}
\end{figure}

In the computation of the discrete inf-sup constant, we will also consider the
same uniform and non-uniform mesh constructions described earlier, considering
two cases: i) a uniform mesh with $T=nL$ and ii) a non-uniform mesh with
$T=n\pi$, based on the initial mesh depicted in Figure~\ref{fig:initial_mesh}.
Figure~\ref{fig:inf_sup_fixedT_V} displays the discrete inf-sup constant against
the number of mesh elements. There, we set the final time for the uniform case to
$T=6$, and in the non-uniform case to $T=2\pi$, and increase the level of
refinement. It becomes clear from Figure~\ref{fig:inf_sup_fixedT_V} that for
fixed $T$, the inf-sup constant does not asymptotically depend on the mesh
parameter $h$, which confirms discrete stability of \eqref{eq:Vpwc_pwl}.

Next, we study numerically how the discrete inf-sup constant depends on the final
time $T$. To make a fair comparison, we fix the final time of both the uniform
and non-uniform meshes to be $T=nL$. In the non-uniform case, the family of meshes
corresponding to different final times $T$ is constructed by truncating the
initial mesh depicted in Figure~\ref{fig:initial_mesh} at $T=3$, and
extending it to $T>3$ by repeatedly appending time-translated copies of this
truncated mesh structure, as shown in Figure~\ref{fig:initial_time_mesh}.

As shown in Figure~\ref{fig:inf_sup_timeslices_V} the discrete inf-sup constant
decreases when the terminal time $T$ increases, yet it remains strictly positive.
In the uniform case the discrete inf-sup constant seems to coincide exactly with
the inf-sup constant $\alpha_{\text{is}}^{\OV}(n)$ from \eqref{eq:inf-sup-optimal},
as expected. Surprisingly, the non-uniform case also closely resembles $\alpha_{
\text{is}}^{\OV}(n)$, which gives us a hint on which direction to go if we want
to generalize the results from Section~\ref{sec:mainResults}.

\begin{figure}[h]
\centering
\begin{subfigure}{0.475\textwidth}
 \pgfplotstableread{Data/inf_sup_1stkind_dualH_Tfixed_uniform.txt}\infsupUni
\pgfplotstableread{Data/inf_sup_1stkind_dualH_Tfixed_terriblemesh.txt}\infsupNon



\pgfplotstableset{
    create on use/Nh/.style={
        create col/copy column from table={Data/N.txt}{N}}
    }
{\scalefont{1.5}
\begin{tikzpicture}
\begin{loglogaxis}[
    log basis y ={2},
    log basis x = {2},
    width=14cm,
    height=14cm,
    xlabel={$N_h$},
    ylabel={Inf-sup constant},
    xmin=64, xmax=1024,
    ymin = 0.1, ymax=0.5,
    ytick pos= left,
    xtick pos= bottom,
    xtick={64,128,256,512,1024},
    xticklabels= {64,128,256,512,1024},
    legend pos=north east,
    legend cell align={left},
    legend style ={font = \large, inner sep= 6pt, outer sep = -6pt},
    ymajorgrids=true,
    grid style=dashed,
]






\addplot[orange,mark= square*,line width=2pt, mark options={solid}] table [x = Nh, y index = 0] {\infsupUni};
\addlegendentry{Inf-sup constant - Uniform}

\addplot[green!60!black, mark=triangle*,mark options=solid, line width=2pt] table [x = Nh, y index = 0] {\infsupNon};
\addlegendentry{Inf-sup constant - Non-Uniform}

\end{loglogaxis}
\end{tikzpicture}
}%
\caption{Results for different refinement levels and two families of meshes:
the uniform case with $T=6$; and the non-uniform case with $T=2\pi$ with
initial mesh as in Figure~\ref{fig:initial_mesh}.}
\label{fig:inf_sup_fixedT_V}
\end{subfigure}\hfill
\begin{subfigure}{0.475\textwidth}
 \pgfplotstableread{Data/inf_sup_1stkind_dualH_Tdiff_uniform.txt}\infsupUni
\pgfplotstableread{Data/inf_sup_1stkind_dualH_Tdiff_terriblemesh.txt}\infsupNon


{\scalefont{1.5}
\begin{tikzpicture}
\begin{loglogaxis}[
    log basis y ={2},
    log basis x = {2},
    width=14cm,
    height=14cm,
    xlabel={$n$},
    ylabel={inf-sup constant},
    xmin=1, xmax=10,
    ytick pos= left,
    xtick pos= bottom,
    xtick={1,2,3,4,5,6,7,8,9,10},
    xticklabels= {1,2,3,4,5,6,7,8,9,10},
    ytick = {0.5,0.25,0.125, 0.0625},
    legend pos=north east,
    legend cell align={left},
    legend style ={font = \large, inner sep= 6pt, outer sep = -6pt},
    ymajorgrids=true,
    grid style=dashed,
]


\addplot [
    domain= 1:10, 
    samples=10, 
    color=black,
    dashed,
    line width=2pt
]
{sin(deg(pi/(4*x+2)))};
\addlegendentry{$\alpha_{\text{is}}^{\OV}(n)$}




\addplot[orange,mark= square*,line width=2pt, mark options={solid}] table [x expr=\coordindex+1, y index = 0] {\infsupUni};
\addlegendentry{Inf-sup constant - Uniform}

\addplot[green!60!black, mark=triangle*,mark options=solid, line width=2pt] table [x expr = \coordindex+1, y index = 0] {\infsupNon};
\addlegendentry{Inf-sup constant - Non-Uniform}

\end{loglogaxis}
\end{tikzpicture}
}%
\caption{Results for different final times and two meshes: a uniform mesh with
$T=nL$ and $h_0=h_L=3/16$; a non-uniform one depicted in
Figure~\ref{fig:initial_time_mesh}.}
\label{fig:inf_sup_timeslices_V}
\end{subfigure}
\caption{Discrete inf-sup constants for $S^0(\Sigma_h) - S_{0,}^1(\Sigma_h)$.}
\end{figure}

\subsection{The stable pair $S^0(\Sigma_h) - S^0(\Sigma_{h/2})$}
In Section~\ref{sssec:stable_LS}, it is shown that the following Galerkin
discretisation is expected to yield stable results: For $g_D \in L^2(\Sigma)$,
find $z_h \in S^0(\Sigma_h)\subset \Dual{H_{,0}^1(\Sigma)}$ such that
\begin{equation}\label{eq:V_LS_m2}
\dual{\OV z_h}{v_{h/2}}_{L^2(\Sigma)} = \dual{g_D}{v_{h/2}}_{L^2(\Sigma)},
\quad \forall v_{h/2} \in S^0(\Sigma_{h/2}).
\end{equation}

We again consider the discrete inf-sup constant both for fixed ending time
and on multiple time-slices. For fixed ending time we consider the same
meshes as in Section~\ref{ssec:results_Vpwc_pwl} and also an adaptive mesh
using the error estimator from Theorem~\ref{thm:errorEstimator}. The results
are presented in Figure~\ref{fig:inf_sup_fixedT_LS}. Recall that stability is
only shown on meshes satisfying Assumption~\ref{Assumption:mesh}, this explains
the observed decay of the discrete inf-sup constant in the case of the
non-uniform mesh in Figure~\ref{fig:inf_sup_fixedT_LS}. When computing the
discrete inf-sup constant for different ending times, only uniform refinements
are considered to ensure 
a fair comparison among meshes. The results are presented
in Figure~\ref{fig:inf_sup_timeslices_LS}.

\begin{figure}[H]
\centering
\begin{subfigure}{0.475\textwidth}
\pgfplotstableread{Data/inf_sup_1stkind_dualH_Tfixed_uniform_LS.txt}\infsupUni
\pgfplotstableread{Data/inf_sup_1stkind_dualH_Tfixed_adaptive_LS.txt}\infsupAdap
\pgfplotstableread{Data/inf_sup_1stkind_dualH_Tfixed_terriblemesh_LS.txt}\infsupNon



\pgfplotstableset{
    create on use/Nh/.style={
        create col/copy column from table={Data/N.txt}{N}}
    }
\pgfplotstableset{
create on use/Nh2/.style={
    create col/copy column from table={Data/N_adaptive_m2.txt}{N}}
}
{\scalefont{1.5}
\begin{tikzpicture}
\begin{loglogaxis}[
    log basis y ={2},
    log basis x = {2},
    width=14cm,
    height=14cm,
    xlabel={$N_h$},
    ylabel={Inf-sup constant},
    xmin=64, xmax=1024,
    ymin = 0.1, ymax=0.5,
    ytick pos= left,
    xtick pos= bottom,
    xtick={64,128,256,512,1024},
    xticklabels= {64,128,256,512,1024},
    legend pos=north east,
    legend cell align={left},
    legend style ={font = \large, inner sep= 6pt, outer sep = -6pt},
    ymajorgrids=true,
    grid style=dashed,
]






\addplot[orange,mark= square*,line width=2pt, mark options={solid}] table [x = Nh, y index = 0] {\infsupUni};
\addlegendentry{Inf-sup constant - Uniform}

\addplot[blue!70!black,mark=o,line width=2pt, mark options={solid}, dashed] table [x = Nh2, y index = 0] {\infsupAdap};
\addlegendentry{Inf-sup constant - Adaptive}

\addplot[green!60!black, mark=triangle*,mark options=solid, line width=2pt] table [x = Nh, y index = 0] {\infsupNon};
\addlegendentry{Inf-sup constant - Non-Uniform}

\end{loglogaxis}
\end{tikzpicture}
}%
\caption{Results for different refinement levels and three families of meshes:
uniform and adaptive meshes with $T=6$; and the non-uniform case with $T=2\pi$ and
initial mesh as in Figure~\ref{fig:initial_mesh}.}
\label{fig:inf_sup_fixedT_LS}
\end{subfigure}\hfill
\begin{subfigure}{0.475\textwidth}
 \includestandalone[width=\textwidth]{Tikz_pics/inf_sup_Tdiff_LS}%
\caption{Results for different final times and a uniform mesh with
$T=nL$ and $h_0=h_L=3/16$.\\[0.45cm]}
\label{fig:inf_sup_timeslices_LS}
\end{subfigure}
\caption{Discrete inf-sup constants for $S^0(\Sigma_h) - S^0(\Sigma_{h/2})$.}
\end{figure}

\section{Stability With Weaker Mesh Constraints}
\label{sec:NoConstraints}
The fact that our numerical experiments gave evidence of numerical stability
when violating our mesh constraint raises the question of whether stable pairings
can still be constructed without imposing this assumption. Indeed, it has been
shown in some cases that the formulation becomes elliptic when composing the
weakly singular operator $\OV$ with a suitable operator \cite{ADG09,SUZ21},
and therefore stability can be obtained without imposing any restriction on the
mesh.

In order to derive a result that does not require mesh assumptions, we
first take a diversion and consider the double layer potential $\OK:H^s_{0,}(
\Sigma)\to H^s_{0,}(\Sigma)$ defined as \cite{HU:2025}
\begin{equation*}
(\OK z)(x,t):=\frac{1}{2}\int_\Sigma \partial_{n_y}\operatorname{H}(x-y,t-\tau)
z(y,\tau) \, ds_y\, d\tau = \begin{cases}
    -\frac{1}{2}z(L,t-L), & x=0 \text{ and } t\geq L,\\
    -\frac{1}{2}z(0,t-L), & x=L \text{ and } t\geq L,\\
    0 & t<L.
  \end{cases} 
\end{equation*} 
For the purpose at hand, the following relation will be useful:
\begin{lemma}\label{lem:relation V and K}
It holds that
\begin{equation*}
\OV\partial_t = -\left(-\frac{1}{2}\OI + \OK\right):H^s_{0,}(\Sigma) \to H^s_{
0,}(\Sigma), \quad s\in[0,1],
\end{equation*}
where $\partial_t$ is understood in the distributional sense.
\end{lemma}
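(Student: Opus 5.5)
The identity is a direct computation using the explicit formulas \eqref{eq:V0}--\eqref{eq:VL} for $\OV$ and the formula for $\OK$ above. The plan is to verify it first for smooth $z=(z_0,z_L)$ with $z_0(0)=z_L(0)=0$, and then extend it to all of $H^s_{0,}(\Sigma)$ by density and continuity.

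\textbf{Smooth case.} Take $z\in C^\infty([0,T])^2$ with zero initial values; such functions are dense in $H^1_{0,}(\Sigma)$ and in every interpolation space $H^s_{0,}(\Sigma)$. Insert $\partial_t z$ into \eqref{eq:V0}, using the convention that $\int_0^{t-L}$ vanishes for $t<L$. The fundamental theorem of calculus together with $z_0(0)=z_L(0)=0$ gives
\begin{equation*}
(\OV_0\partial_t z)(t) = \tfrac12 z_0(t) + \tfrac12 z_L(t-L)\,\mathsf{H}(t-L).
\end{equation*}
The analogous computation from \eqref{eq:VL} gives
\begin{equation*}
(\OV_L\partial_t z)(t) = \tfrac12 z_L(t) + \tfrac12 z_0(t-L)\,\mathsf{H}(t-L).
\end{equation*}
Comparing with the definition of $\OK$, the cross terms equal $-(\OK z)$ at $x=0$ and at $x=L$, respectively. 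Hence $\OV\partial_t z = \tfrac12 z - \OK z = -\left(-\tfrac12\OI+\OK\right)z$.

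\textbf{Extension by density.} Next I show that both sides are bounded $H^s_{0,}(\Sigma)\to H^s_{0,}(\Sigma)$, with $\partial_t$ understood distributionally.
\begin{itemize}
\item The right-hand side is clearly bounded. $\OK$ is a shift by $L$ combined with an extension by zero for $t<L$. This is bounded on $L^2$. It is also bounded on $H^1_{0,}$, because the zero initial value makes the extension continuous at $t=L$. Interpolation then gives boundedness for all $s\in[0,1]$.
\item For the left-hand side, $\partial_t$ maps $H^s_{0,}(\Sigma)\to [H^{1-s}_{,0}(\Sigma)]'$. For $s=1$ this is the isomorphism $H^1_{0,}\to L^2$. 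For $s=0$ it maps $L^2\to[H^1_{,0}]'$ via $\langle \partial_t z,w\rangle = -\int z\,w'$, which is exactly the distributional derivative tested against functions vanishing at $T$. Interpolation covers the intermediate $s$.
\item By the isomorphism property $\OV:[H^{1-s}_{,0}(\Sigma)]'\to H^{s}_{0,}(\Sigma)$ from \cite{SUZ21}, applied with $1-s$ in place of $s$, the composition $\OV\partial_t$ is bounded $H^s_{0,}\to H^s_{0,}$.
\end{itemize}
The two bounded operators agree on a dense subspace, so they coincide.

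\textbf{Expected obstacle.} The only delicate point is the bookkeeping at $s=0$. There $\partial_t z$ is only a functional, and one must check that $\OV$ applied to it is still given by the integral formula in the sense of \eqref{eq:V0}. I would sidestep this by relying solely on density and the boundedness just established, rather than evaluating pointwise.
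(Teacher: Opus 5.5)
Your proposal is correct and follows the paper's route. Both arguments use the mapping properties $\partial_t:H^s_{0,}(\Sigma)\to[H^{1-s}_{,0}(\Sigma)]'$ and $\OV:[H^{1-s}_{,0}(\Sigma)]'\to H^s_{0,}(\Sigma)$ to make the composition well defined, then evaluate $\OV\partial_t z$ by the fundamental theorem of calculus and read off $\tfrac12 z-\OK z$. Your density step just makes rigorous what the paper does formally. Your formulas with the factor $\mathsf{H}(t-L)$ are also the accurate version of the paper's case distinction: the paper lists $0$ for $t<L$, but the value there is $\tfrac12 z(x,t)$, which is consistent with the final identity since $\OK z=0$ for $t<L$.
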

\proof{
We begin by recalling that $\partial_t:H^s_{0,}(\Sigma)\to [H^{1-s}_{,0}(\Sigma
)]'$ and $\OV:[H^{1-s}_{,0}(\Sigma)]'\to H^{s}_{0,}(\Sigma)$ are isomorphisms
for $s\in[0,1]$ and thus the concatenation is well defined.
Further, by the definition of the operator, we compute
\begin{equation*}
(\OV \partial_t z) (x,t) = \left\{\begin{array}{cc}
\frac{1}{2}z(0,t)   + \frac{1}{2}z(L,t-L), & x=0\text{ and } t\geq L,\\[0.03in]
\frac{1}{2}z(0,t-L) + \frac{1}{2}z(L,t),   & x=L\text{ and } t\geq L,\\
    0 & t<L.
\end{array}\right\} = \frac{1}{2}z(t) - (\OK z)(t).
\end{equation*}
}
As shown in \cite[Theorem 3.2]{HU:2025}, the following ellipticity estimate
holds true
\begin{equation}\label{eq:ellipticity K}
\left|\left\langle\left(-\frac{1}{2}\OI+\OK\right)w,w\right\rangle_{\Sigma
}\right| \geq \alpha_c^{\OK}(n)\norm{w}_{L^2(\Sigma)}^2,
\end{equation}
where $\alpha_c^{\OK}(n):= \sin\left(\frac{\pi}{2(n+1)}\right)^2$, which
is the starting point for our first attempt to show discrete stability without
the mesh assumption \ref{Assumption:mesh}.

\subsection{A dull stability estimate without mesh constraints}
\label{ssec:fistAttempt}
\begin{theorem}\label{thm:discrete_inf-sup_energetic-BEM}
Let $n\in \mathbb{N}$ denote the number of time slices $T_j$ as defined in
\eqref{eq:timeslices}. There holds the discrete inf-sup condition
\begin{align}
\alpha_c^{\OK}(n) \leq \inf_{0\neq z_h\in S^0(\Sigma_h)} 
\sup_{0\neq q_h \in S_{0,}^1(\Sigma_h)} \frac{\abs{\dual{ \OV z_h}{ q_h}
_{\Sigma} }}{\norm{z_h}_{[H_{,0}^1(\Sigma)]'}\norm{q_h}_{L^2(\Sigma)}}.
\end{align}
\end{theorem}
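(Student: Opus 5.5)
The plan is to exploit Lemma~\ref{lem:relation V and K} together with the ellipticity estimate \eqref{eq:ellipticity K}. The key observation is that $\partial_t$ maps $S^1_{0,}(\Sigma_h)$ bijectively onto $S^0(\Sigma_h)$, on each boundary part separately and for any mesh, without Assumption~\ref{Assumption:mesh}. Given $0\neq z_h\in S^0(\Sigma_h)$, I will set
\begin{equation*}
Z_h(t):=\int_0^t z_h(s)\,ds,
\end{equation*}
componentwise on $\Sigma^0$ and $\Sigma^L$. Then $Z_h\in S^1_{0,}(\Sigma_h)$ and $\partial_t Z_h = z_h$. I will take $q_h := Z_h$ as the test function in the supremum.

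\textbf{Step 1 (the numerator).} By Lemma~\ref{lem:relation V and K} with $s=0$,
\begin{equation*}
\dual{\OV z_h}{Z_h}_\Sigma = \dual{\OV\partial_t Z_h}{Z_h}_\Sigma = -\dual{\left(-\tfrac12\OI+\OK\right)Z_h}{Z_h}_\Sigma .
\end{equation*}
Applying \eqref{eq:ellipticity K} with $w=Z_h\in L^2(\Sigma)$ then gives
\begin{equation*}
\abs{\dual{\OV z_h}{Z_h}_\Sigma}\geq \alpha_c^{\OK}(n)\norm{Z_h}_{L^2(\Sigma)}^2 .
\end{equation*}
This is why the absolute value appears in the statement.

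\textbf{Step 2 (the norm of $z_h$).} I will show $\norm{z_h}_{[H^1_{,0}(\Sigma)]'} = \norm{Z_h}_{L^2(\Sigma)}$. On each boundary component this is exactly the computation \eqref{eq:H-1 to L2 norm}. It integrates by parts using $Z_h(0)=0$ and $w(T)=0$, and uses that $w\mapsto -w'$ is an isomorphism $H^1_{,0}(0,T)\to L^2(0,T)$ which is isometric for the chosen norm. Since the norms on $\Sigma$ are product norms over the two components, the identity carries over to $\Sigma$. It does not depend on $T\le L$, because it only involves time integration on each boundary part.

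\textbf{Step 3 (conclusion).} Combining the two steps,
\begin{equation*}
\sup_{0\neq q_h\in S^1_{0,}(\Sigma_h)}\frac{\abs{\dual{\OV z_h}{q_h}_\Sigma}}{\norm{q_h}_{L^2(\Sigma)}}\geq \frac{\abs{\dual{\OV z_h}{Z_h}_\Sigma}}{\norm{Z_h}_{L^2(\Sigma)}}\geq \alpha_c^{\OK}(n)\norm{Z_h}_{L^2(\Sigma)}=\alpha_c^{\OK}(n)\norm{z_h}_{[H^1_{,0}(\Sigma)]'} .
\end{equation*}
Taking the infimum over $z_h$ yields the claim.

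\textbf{Expected obstacles.} The only delicate points are, first, justifying Step 2 for the product space $\Sigma$ on a general (possibly unaligned) mesh, and second, confirming that \eqref{eq:ellipticity K} applies to arbitrary $w\in L^2(\Sigma)$ rather than only to discrete functions. It does, since it is a continuous estimate, and that is precisely why no mesh constraint is needed. The resulting constant $\sin^2(\pi/(2(n+1)))$ is smaller than $\alpha^{\OV}_{\text{is}}(n)$, which explains the word ``dull'' in the section title.
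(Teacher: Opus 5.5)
Your proposal is correct and is the paper's argument: test with $q_h=Z_h=\int_0^t z_h(s)\,ds\in S^1_{0,}(\Sigma_h)$, rewrite $\dual{\OV z_h}{Z_h}_\Sigma=\dual{\OV\partial_t Z_h}{Z_h}_\Sigma$ via Lemma~\ref{lem:relation V and K}, apply \eqref{eq:ellipticity K}, and conclude with $\norm{z_h}_{[H^1_{,0}(\Sigma)]'}=\norm{Z_h}_{L^2(\Sigma)}$ from \eqref{eq:H-1 to L2 norm}. Your closing aside needs a small correction: for $n=1$ the two constants coincide ($\sin^2(\pi/4)=\sin(\pi/6)=\tfrac12$), so $\alpha_c^{\OK}(n)\le\alpha_{\text{is}}^{\OV}(n)$ holds in general, with strict inequality only for $n\ge 2$.
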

\proof{ 
Let $z_h\in S^0(\Sigma_h)$ be arbitrary but fixed. Consider $Z_h(t) = \int_0^t
z_h(s)\, ds \in S_{0,}^1(\Sigma_h)$ and recall from \eqref{eq:H-1 to L2 norm}
that $\norm{z_h}_{[H^1_{,0}(\Sigma)]'} = \norm{Z_h}_{L^2(\Sigma)}$. Then, using
the relation in Lemma \ref{lem:relation V and K} and the ellipticity estimate
\eqref{eq:ellipticity K}, we derive
\begin{align*}
\sup_{0\neq q_h\in S_{0,}^1(\Sigma_h)}\frac{|\dual{\OV z_h}{q_h}_{\Sigma}|}{
\norm{q_h}_{L^2(\Sigma)}}
& \geq \frac{|\dual{\OV \partial_t Z_h}{Z_h}_{\Sigma}|}{\norm{Z_h}_{L^2(\Sigma)}}
\geq \alpha_c^{\OK}(n)\norm{Z_h}_{L^2(\Sigma)}
= \alpha_c^{\OK}(n) \norm{z_h}_{[H^1_{,0}(\Sigma)]'}.
\end{align*} 
}

This shows that stability also holds for arbitrary meshes. However, the
corresponding stability constant $\alpha_c^{\OK}(n)$ exhibits a qualitatively
different behavior compared to the constants in \eqref{eq:inf-sup-optimal} and
Theorem \ref{thm:discrete inf sup m}, which depend on $\alpha_{\mathrm{is}}^{\OV
}(n)$. More precisely, we see that
\[
\alpha_c^{\OK}(n)\sim n^{-2},
\qquad \text{ and } \qquad \alpha_{\mathrm{is}}^{\OV}(n)\sim n^{-1},
\qquad \text{ when } n\to\infty.
\]
As displayed in Figure \ref{fig:inf_sup_timeslices_V}, our numerical
experiments indicate that the computed discrete inf-sup constant follows the
improved behavior of $\alpha_{\mathrm{is}}^{\OV}(n)\sim n^{-1}$ even on arbitrary
meshes. Hence, our estimate in Theorem~\ref{thm:discrete_inf-sup_energetic-BEM}
is not sharp enough. The remainder of this section is dedicated to improving this
result.

\subsection{Choose someone who lives up to their projection: \\stability under
a different mesh condition}
\label{ssec:victory}
Now we finally move on to obtaining the desired improved inf-sup constant, which
asymptotically behaves like the numerically observed power in $n$. For this, let us
introduce the $L^2$-projection onto the piecewise linear functions $\OQ_h^1:L^2
(\Sigma) \to S_{0,}^1(\Sigma_h)$ defined as
\begin{equation*}
\dual{\OQ_h^1 v}{q_h}_{\Sigma} = \dual{v}{q_h}_{\Sigma},\quad \forall q_h\in
  S_{0,}^1(\Sigma_h),
\end{equation*}
and note that, for $z_h \in S^0(\Sigma_h)$, we obtain the lower bound
\begin{align*}
\sup_{0\neq q_h\in S_{0,}^1(\Sigma_h)} \frac{\dual{\OV z_h}{q_h}_{\Sigma}}
{\norm{q_h}_{L^2(\Sigma)}}  &
= \sup_{0\neq q_h\in S_{0,}^1(\Sigma_h)} \frac{\dual{\OQ_h^1\OV z_h}{q_h}_{\Sigma
}}{\norm{q_h}_{L^2(\Sigma)}} \\
&\geq \frac{\dual{\OQ_h^1 \OV z_h}{\OQ_h^1 \OV z_h}_{\Sigma}}{\norm{\OQ_h^1\OV
z_h}_{L^2(\Sigma)}} = \norm{\OQ_h^1 \OV z_h}_{L^2(\Sigma)};
\end{align*}
while, using the Cauchy--Schwarz inequality, we get the upper bound
\begin{align*}
\sup_{0\neq q_h\in S_{0,}^1(\Sigma_h)} \frac{\dual{\OV z_h}{q_h}_{\Sigma}}{\norm{
q_h}_{L^2(\Sigma)}}  
&= \sup_{0\neq q_h\in S_{0,}^1(\Sigma_h)} \frac{\dual{\OQ_h^1\OV z_h}{q_h}_{
\Sigma}}{\norm{q_h}_{L^2(\Sigma)}}\\
&\leq \sup_{0\neq q_h\in S_{0,}^1(\Sigma_h)} \frac{\norm{\OQ_h^1\OV z_h}_{L^2(
\Sigma)}\norm{q_h}_{L^2(\Sigma)}}{\norm{q_h}_{L^2(\Sigma)}}
  = \norm{\OQ_h^1 \OV z_h}_{L^2(\Sigma)}.
\end{align*}
Therefore, altogether,
\begin{equation}\label{eq:inf-sup-arbitrary-mesh-exact representation}
\sup_{0\neq q_h\in S_{0,}^1(\Sigma_h)}\frac{\dual{\OV z_h}{q_h}_{\Sigma}}{
\norm{q_h}_{L^2(\Sigma)}} = \norm{\OQ_h^1 \OV z_h}_{L^2(\Sigma)}.
\end{equation}
Consequently, proving the desired discrete inf-sup estimate boils down to showing
that
\begin{equation*}
\norm{\OQ_h^1 \OV z_h}_{L^2(\Sigma)} \geq \widetilde{\alpha}_{\text{is}}^{\OV
}(n) \norm{z_h}_{[H^1_{,0}(\Sigma)]'},
\end{equation*}
for a positive constant $\widetilde{\alpha}_{\text{is}}^{\OV}(n) \sim n^{-1}$. 
Introducing $Z_h(t) = \int_0^t z_h(s)\, ds \in S_{0,}^1(\Sigma_h)$, and
using the established identity in Lemma \ref{lem:relation V and K}, we have that
\begin{align*}
  \norm{\OQ_h^1 \OV z_h}_{L^2(\Sigma)}
  &=
  \norm{\OQ_h^1 \OV \partial_t Z_h}_{L^2(\Sigma)}
  =
  \left\|\OQ_h^1\left(\frac{1}{2}\OI-\OK\right)Z_h\right\|_{L^2(\Sigma)}.
\end{align*}
Furthermore, noting that $\OQ_h^1 Z_h = Z_h$, and defining $\OS := 2\OK:L^2(
\Sigma)\to L^2(\Sigma)$ and $\OS_h := \OQ_h^1 \OS:S_{0,}^1(\Sigma_h)\to S^1_{0,
}(\Sigma_h)$,
this becomes
\begin{equation*}
  \norm{\OQ_h^1 \OV z_h}_{L^2(\Sigma)}
  =  \norm{\frac{1}{2}(\OI-\OQ_h^1 S)Z_h}_{L^2(\Sigma)}
  =  \frac{1}{2}\norm{(\OI-\OS_h)Z_h}_{L^2(\Sigma)}.
\end{equation*}
Recalling that $\norm{z_h}_{[H^1_{,0}(\Sigma)]'} = \norm{Z_h}_{L^2(\Sigma)}$,
see \eqref{eq:H-1 to L2 norm}, we thus need an estimate of the form
\begin{equation*}
\frac{1}{2}\norm{(\OI-\OS_h)Z_h}_{L^2(\Sigma)}\geq\widetilde{\alpha}_{\text{is}
}^{\OV}(n)\norm{Z_h}_{L^2(\Sigma)}.
\end{equation*}
  A first step is taken in the next Lemma. 
\begin{lemma}\label{lem:asymptotic bound an alpha_is using norm S_h^n}
Let $n\in \mathbb{N}$ denote the number of time slices $T_j$ as defined in
\eqref{eq:timeslices}. Let $\OS_h^n:= (\OS_h)^n$ denote the n-th power of
$\OS_h$. Then
\begin{equation*}
  \widetilde{\alpha}_{\text{is}}^{\OV}(n)\geq \frac{1-\norm{\OS_h^n}_{L^2(
  \Sigma) \to L^2(\Sigma)}}{2n}.
\end{equation*}
\end{lemma}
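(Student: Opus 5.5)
We want to bound $\frac12\norm{(\OI-\OS_h)Z_h}_{L^2(\Sigma)}$ from below by a multiple of $\norm{Z_h}_{L^2(\Sigma)}$. The idea is to use the telescoping (geometric series) identity
\begin{equation*}
\OI-\OS_h^n = \Big(\sum_{k=0}^{n-1}\OS_h^k\Big)(\OI-\OS_h),
\end{equation*}
which holds on $S^1_{0,}(\Sigma_h)$ because $\OS_h$ maps $S^1_{0,}(\Sigma_h)$ into itself, so all powers are well defined there. Here $\widetilde{\alpha}_{\text{is}}^{\OV}(n)$ is understood as the best constant in $\frac12\norm{(\OI-\OS_h)Z_h}\ge \widetilde{\alpha}_{\text{is}}^{\OV}(n)\norm{Z_h}$ over $Z_h\in S^1_{0,}(\Sigma_h)$.

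First, we show that $\OS_h$ is a contraction in $L^2(\Sigma)$. From the explicit formula for $\OK$, $\OS=2\OK$ maps $z$ to $-z$ on the opposite boundary shifted by $L$ in time, truncated at $T$, and to zero for $t<L$. Hence $\norm{\OS w}_{L^2(\Sigma)}\le\norm{w}_{L^2(\Sigma)}$: the shift is an isometry on $L^2(\mathbb{R})$, and the truncation to $(0,T)$ only decreases the norm. Since $\OQ_h^1$ is an $L^2$-orthogonal projection, it has norm at most one, so $\norm{\OS_h}\le 1$ and $\norm{\OS_h^k}\le 1$ for every $k$. Consequently
\begin{equation*}
\Big\|\sum_{k=0}^{n-1}\OS_h^k\Big\|_{L^2(\Sigma)\to L^2(\Sigma)}\le n.
\end{equation*}

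Next, for $Z_h\in S^1_{0,}(\Sigma_h)$, the reverse triangle inequality gives
\begin{equation*}
(1-\norm{\OS_h^n})\norm{Z_h}\le\norm{(\OI-\OS_h^n)Z_h}=\Big\|\sum_{k=0}^{n-1}\OS_h^k(\OI-\OS_h)Z_h\Big\|\le n\norm{(\OI-\OS_h)Z_h}.
\end{equation*}
Multiplying by $\frac12$ and dividing by $n$ yields
\begin{equation*}
\frac12\norm{(\OI-\OS_h)Z_h}\ge\frac{1-\norm{\OS_h^n}}{2n}\norm{Z_h}.
\end{equation*}
Combined with the preceding reduction $\norm{\OQ_h^1\OV z_h}=\frac12\norm{(\OI-\OS_h)Z_h}$ and $\norm{z_h}_{[H^1_{,0}(\Sigma)]'}=\norm{Z_h}$, this is exactly the claimed lower bound on $\widetilde{\alpha}_{\text{is}}^{\OV}(n)$.

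The only step needing care is the contraction property of $\OS$. We must check that the causal shift with truncation is nonexpansive on $L^2(\Sigma)$, with the two boundary components swapped. This follows directly from the case formula for $\OK$ by a change of variables $t\mapsto t-L$. The statement does not require $\norm{\OS_h^n}<1$; if that fails, the bound is merely trivial. Showing strict contraction, for which nilpotency of $\OS$, namely $\OS^n=0$ on $(0,T)$ with $T\le nL$, is the natural lever, is left for subsequent results.
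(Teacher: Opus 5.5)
Your proposal is correct and follows the paper's proof step by step. The three ingredients are the same: the contraction bound $\norm{\OS_h^k}_{L^2(\Sigma)\to L^2(\Sigma)}\le 1$ (from the nonexpansive shift and the unit-norm $L^2$-projection), the factorization $\OI-\OS_h^n=\big(\sum_{k=0}^{n-1}\OS_h^k\big)(\OI-\OS_h)$ for the upper bound, and the reverse triangle inequality for the lower bound. Your explicit change-of-variables check that the truncated shift $\OS$ is nonexpansive fills in a detail the paper states without computation.
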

\proof{
First, note that since the $L^2$-projection is bounded with constant one and
$\OS$ is a shift operator, we have that $\norm{\OS_h}_{L^2(\Sigma)\to L^2(
\Sigma)} = \norm{\OQ_h^1 \OS}_{L^2(\Sigma)\to L^2(\Sigma)} \leq 1$.
Consequently, we also get $\norm{\OS_h^k}_{L^2(\Sigma)\to L^2(\Sigma)} \leq 1$
for all $k=0,\ldots,n$. Now, using the identity
\begin{equation*}
  (\OI-\OS_h^n) = \left(\sum_{k=0}^{n-1}\OS_h^k\right)(\OI-\OS_h),
\end{equation*}
we get
\begin{equation*}
  \norm{(\OI-\OS_h^n)Z_h}_{L^2(\Sigma)}
  \leq
  \sum_{k=0}^{n-1}\norm{\OS_h^k}_{L^2(\Sigma)\to L^2(\Sigma)}\norm{(\OI-\OS_h)
  Z_h}_{L^2(\Sigma)}
  \leq
  n \norm{(\OI-\OS_h)Z_h}_{L^2(\Sigma)}.
\end{equation*}
On the other hand, using the inverse triangle inequality gives
\begin{equation*}
  \norm{(\OI-\OS_h^n)Z_h}_{L^2(\Sigma)}
  \geq
  (1-\norm{\OS_h^n}_{L^2(\Sigma) \to L^2(\Sigma)})\norm{Z_h}_{L^2(\Sigma)},
\end{equation*}
which finishes the proof.
}

Hence, the asymptotic order of decay depends on the quantity $\norm{
\OS_h^n}_{L^2(\Sigma)\to L^2(\Sigma)}$. Noting that $\OS$ is a right shift, we
have that $\OS^nZ_h=0$. Thus, if for any $Z_h \in S_{0,}^1(\Sigma_h)$, it holds
that $\OS_h Z_h = \OS Z_h$, we get $\widetilde{\alpha}_{\text{is}}^{\OV}(n)
\geq\frac{1}{2n} \simeq \alpha_{\text{is}}^{\OV}(n)$.
For general meshes though, $\OS_h Z_h \neq \OS Z_h$. More precisely,
using that
\begin{equation}\label{eq:auxiliary S_h and S relation}
  \OS_h^k -\OS^k 
  =
  \sum_{j=0}^{k-1}\left(\OS_h^{k-j}\OS^j-\OS_h^{k-1-j}\OS^{j+1}\right)
  =
  \sum_{j=0}^{k-1}\OS_h^{k-1-j}(\OS_h-\OS)\OS^j,
\end{equation} 
we obtain
\begin{align*}
  \OS_h^kZ_h 
  &= \OS^k Z_h -\sum_{j=0}^{k-1} \OS_h^{k-1-j}(I-\OQ_h^1)\OS^{j+1}Z_h\\
  &= \OS^k Z_h + \text{propagation of projection errors.}  
\end{align*}
This implies that if the mesh does not resolve the propagated function well
enough, the discrete stability will get worse. Reversing the roles of $\OS_h$
and $\OS$ in \eqref{eq:auxiliary S_h and S relation}, we also obtain the
representation
\begin{equation}\label{eq:S_h representation in S}
  \OS_h^kZ_h
  = \OS^k Z_h - \sum_{j=0}^{k-1} \OS^{k-1-j}(I-\OQ_h^1) \OS \OS_h^j Z_h,
\end{equation}
that will be used for proving a worst case estimate that shows that
asymptotically we are at least as good, as the coercivity estimate in
Theorem~\ref{thm:discrete_inf-sup_energetic-BEM}.
\begin{lemma}\label{lem:worst case estimate norm S_h^n}
For any given mesh and any $n\in \mathbb{N}$ satisfying $n\geq T/L$ it holds
that
  \begin{equation*}
    \norm{\OS_h^n}_{L^2(\Sigma)\to L^2(\Sigma)}
    \leq 
    \sqrt{\frac{n}{n+1}}. 
  \end{equation*}
\end{lemma}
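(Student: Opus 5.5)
The plan is to combine the representation \eqref{eq:S_h representation in S} with $k=n$, the nilpotency of the shift $\OS$, and an energy (telescoping) argument built on the orthogonality of the $L^2$-projection. We start from the same facts used in the proof of Lemma~\ref{lem:asymptotic bound an alpha_is using norm S_h^n}. First, $\OS=2\OK$ maps the trace on one boundary part to the other and shifts it by $L$ in time, so $\norm{\OS w}_{L^2(\Sigma)}\leq\norm{w}_{L^2(\Sigma)}$ for all $w\in L^2(\Sigma)$. Second, since $n\geq T/L$, we have $\OS^n w=0$ for every $w\in L^2(\Sigma)$, because $n$ shifts push any function beyond $T$. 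Let $w\in L^2(\Sigma)$ be arbitrary. Setting $k=n$ in \eqref{eq:S_h representation in S} (which holds for any $w$, not only for elements of $S^1_{0,}(\Sigma_h)$) and using $\OS^n w=0$, we get
\begin{equation*}
\OS_h^n w = -\sum_{j=0}^{n-1}\OS^{n-1-j} e_j,\qquad e_j:=(\OI-\OQ_h^1)\OS\OS_h^j w .
\end{equation*}

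\textbf{Bounding $\OS_h^n w$ by the projection errors.} Since $\OS^{n-1-j}$ is a contraction, the triangle and Cauchy--Schwarz inequalities give
\begin{equation*}
\norm{\OS_h^n w}_{L^2(\Sigma)}\leq\sum_{j=0}^{n-1}\norm{e_j}_{L^2(\Sigma)}\leq\sqrt{n}\Big(\sum_{j=0}^{n-1}\norm{e_j}_{L^2(\Sigma)}^2\Big)^{1/2}.
\end{equation*}

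\textbf{Telescoping the errors.} The key observation is the orthogonal splitting $\OS\OS_h^j w=\OQ_h^1\OS\OS_h^j w+e_j=\OS_h^{j+1}w+e_j$, whose two parts are $L^2$-orthogonal. By Pythagoras and the contraction property of $\OS$,
\begin{equation*}
\norm{\OS_h^{j+1}w}_{L^2(\Sigma)}^2+\norm{e_j}_{L^2(\Sigma)}^2=\norm{\OS\OS_h^j w}_{L^2(\Sigma)}^2\leq\norm{\OS_h^j w}_{L^2(\Sigma)}^2 .
\end{equation*}
Summing over $j=0,\ldots,n-1$, the sum telescopes to
\begin{equation*}
\sum_{j=0}^{n-1}\norm{e_j}_{L^2(\Sigma)}^2\leq\norm{w}_{L^2(\Sigma)}^2-\norm{\OS_h^n w}_{L^2(\Sigma)}^2 .
\end{equation*}

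\textbf{Conclusion.} Normalize $\norm{w}_{L^2(\Sigma)}=1$ and write $x=\norm{\OS_h^n w}_{L^2(\Sigma)}$. The two bounds combine to $x^2\leq n(1-x^2)$, i.e.\ $x^2\leq n/(n+1)$. Taking the supremum over all such $w$ gives the claimed bound on $\norm{\OS_h^n}_{L^2(\Sigma)\to L^2(\Sigma)}$.

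The only genuinely nontrivial step is the telescoping: one must notice that the projection errors $e_j$ are exactly the "energy" lost at each application of $\OS_h$, and that this loss is controlled by the drop from $\norm{\OS_h^j w}$ to $\norm{\OS_h^{j+1}w}$. Checking that $\OS^n=0$ on all of $L^2(\Sigma)$ under $n\geq T/L$, and that $\OS$ is a contraction, is routine from the explicit formula for $\OK$.
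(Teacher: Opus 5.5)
Your proof is correct and matches the paper's argument step for step: the representation \eqref{eq:S_h representation in S} with $k=n$ and $\OS^n=0$, then the triangle and discrete Cauchy--Schwarz inequalities, then the Pythagorean identity for $\OQ_h^1$ together with the contraction $\norm{\OS}\leq 1$, which yields the telescoping bound $x^2\leq n(1-x^2)$. The only difference is cosmetic: you take arbitrary $w\in L^2(\Sigma)$, with $\OS_h=\OQ_h^1\OS$ regarded as an operator on all of $L^2(\Sigma)$, whereas the paper takes $Z_h\in S^1_{0,}(\Sigma_h)$; this is harmless and slightly more general.
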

\proof{
Since $\OS^n Z_h =0$ for all $Z_h \in S_{0,}^1(\Sigma_h)$, and $\norm{\OS}_{L^2
(\Sigma)\to L^2(\Sigma)}\leq 1$, we have by \eqref{eq:S_h representation in S}
that
\begin{align}\label{eq:boundbysum}
  \norm{\OS_h^n Z_h}_{L^2(\Sigma)}^2
  &=
  \norm{\sum_{j=0}^{n-1}\OS^{n-1-j}(\OI-\OQ_h^1)\OS\OS_h^j Z_h}_{L^2(\Sigma)}^2
  \nonumber\\ &\leq
  \left(\sum_{j=0}^{n-1} \norm{\OS^{n-1-j}}_{L^2(\Sigma)\to L^2(\Sigma)} \norm{
  (\OI-\OQ_h^1) \OS \OS_h^j Z_h}_{L^2(\Sigma)}\right)^2\nonumber\\
  & \leq
  \left(\sum_{j=0}^{n-1}\norm{(\OI-\OQ_h^1)\OS\OS_h^j Z_h}_{L^2(\Sigma)}\right)^2
  \nonumber\\& \leq
  n \sum_{j=0}^{n-1}\norm{(\OI-\OQ_h^1) \OS \OS_h^j Z_h}_{L^2(\Sigma)}^2,
\end{align}
where the last step follows from using the finite dimensional Cauchy--Schwarz
inequality. Next, the orthogonality of the $L^2$-projection gives us that
\begin{align*}
  \norm{(\OI-\OQ_h^1) \OS \OS_h^j Z_h}_{L^2(\Sigma)}^2
  &=
  \norm{\OS \OS_h^j Z_h}_{L^2(\Sigma)}^2 - \norm{\OQ_h^1 \OS \OS_h^j Z_h}_{L^2(
  \Sigma)}^2\\
  &=
  \norm{\OS \OS_h^j Z_h}_{L^2(\Sigma)}^2 - \norm{\OS_h^{j+1} Z_h}_{L^2(\Sigma)}^2\\
  & \leq
  \norm{\OS_h^j Z_h}_{L^2(\Sigma)}^2 - \norm{\OS_h^{j+1} Z_h}_{L^2(\Sigma)}^2.
\end{align*}
Hence, the last sum in \eqref{eq:boundbysum} can be estimated by a telescoping
sum and we obtain
\begin{align*}
\norm{\OS_h^n Z_h}_{L^2(\Sigma)}^2
& \leq
n \sum_{j=0}^{n-1}\norm{(\OI-\OQ_h^1) \OS \OS_h^j Z_h}_{L^2(\Sigma)}^2\\
& \leq
n \sum_{j=0}^{n-1} \left(\norm{\OS_h^j Z_h}_{L^2(\Sigma)}^2 - \norm{\OS_h^{j+1}
Z_h}_{L^2(\Sigma)}^2\right)\\
& = n(\norm{Z_h}_{L^2(\Sigma)}^2 - \norm{\OS_h^{n} Z_h}^2),
\end{align*}
which, after reordering, gives the desired result.
}
\begin{remark}
Let $n\in \mathbb{N}$ denote the number of time slices. Combining
Lemma~\ref{lem:asymptotic bound an alpha_is using norm S_h^n} with
Lemma~\ref{lem:worst case estimate norm S_h^n}, we have that
\begin{equation*}
  \widetilde{\alpha}_{\text{is}}^{\OV}(n) \geq \frac{1}{2n\sqrt{n+1}(\sqrt{n+1}
  +\sqrt{n})} \simeq \frac{1}{4n^2} \simeq \alpha_{\text{c}}^{\OK}(n).
\end{equation*}
\end{remark}
\begin{remark}
If Assumption \ref{Assumption:mesh} on the mesh holds true, we have that
\begin{equation*}
  \OS_h Z_h = \OQ_{h}^1 \OS Z_h = \OS Z_h,\quad \forall Z_h\in S_{0,}^1(\Sigma_h). 
\end{equation*}
In particular, it holds that $\norm{\OS_h^n}_{L^2(\Sigma)\to L^2(\Sigma)} =
\norm{\OS^n}_{L^2(\Sigma)\to L^2(\Sigma)} =0$ and thus
\begin{equation*}
\widetilde{\alpha}_{\text{is}}^{\OV}(n) \geq \frac{1}{2n}\simeq \alpha_{
\text{is}}^{\OV}(n).
\end{equation*}
This indicates that if the mesh is able to resolve the propagated information
sufficiently well, the inf-sup constant behaves like
$\widetilde{\alpha}_{\text{is}}^{\OV}\sim n^{-1}$.
\end{remark}

\subsubsection{Approximated compatibility}
\label{sssec:appComp}
In the following, we will give a sufficient condition on the mesh that guarantees
the desired asymptotic scaling. We assume $T>L$, as otherwise no mesh condition
is required.

Let $\mathcal{N}_{h,\ast}.= \{t_{i,\ast}\}_{i=0}^{N_\ast}$ denote the nodes of
$\Sigma_{h}^\ast$, $\ast \in \{0,L\}$ in the temporal direction. Moreover, we
introduce the shifted set of nodes
\begin{equation*}
\widehat{\mathcal{N}}_{h,\ast}   =
\{\widehat t_{i,\ast}\}_{i=0}^{\widehat N_\ast} := \{0,T\} \cup \{t_{i,\ast}+L
:\, t_{i,\ast}\in \mathcal{N}_{h,\ast}, \, t_{i,\ast}+L<T\},
\end{equation*}
consisting of all shifted nodes of $\mathcal{N}_{h,\ast}$ that are within the
interval $[0,T]$ and the nodes $0$ and $T$. Clearly, $\widehat N_\ast \leq
N_\ast+1$.

We introduce the index set 
\begin{equation*}
J_{\ell,0} := \big\{ i\in\{1,\ldots,\widehat N_0-1\}: \, \widehat t_{i,0} \in
(t_{\ell-1,L},t_{\ell,L})\big\},\,  \ell=1,\ldots,N_L,
\end{equation*} 
containing the indices of all shifted nodes from the left boundary that lie
strictly within an element of the right boundary. Analogously, we define the
index set
\begin{equation*}
J_{\ell,L} :=\big\{ i\in\{1,\ldots,\widehat N_L-1\}:\, \widehat t_{i,L} \in
(t_{\ell-1,0},t_{\ell,0})\big\}, \, \ell = 1 ,\ldots, N_0.
\end{equation*} 
We pose the following assumptions. 
\begin{assumption}\label{ass:mesh weaker}
There exists a constant $M_m\in\mathbb N$, independent of $h$ and
$n$, such that
\begin{equation*}
  |J_{\ell,0}|\leq M_m,
  \quad \ell=1,\ldots,N_L,
  \quad
  |J_{k,L}|\leq M_m,
  \quad k=1,\ldots,N_0.
\end{equation*}
Moreover, let $h_{\ell,\ast} := t_{\ell,\ast}-t_{\ell-1,\ast}$ and $\whh_\ell:=
\widehat t_{\ell,\ast} - \widehat t_{\ell-1,\ast}$, $\ast \in \{0,L\}$. Then,
we assume
\[
  h_{\ell,L} \leq M_m\min\{\whh_{i,0},\whh_{i+1,0}\},
  \quad\forall i\in J_{\ell,0},
\]
and
\[
  h_{\ell,0} \leq M_m\min\{\whh_{i,L},\whh_{i+1,L}\},
  \quad \forall i\in J_{\ell,L}.
\]
\end{assumption}
\noindent 
The above assumption implies that the number of shifted nodes lying within an
element is uniformly bounded, and that the target elements containing the
shifted nodes are not arbitrarily large.

Now, in order to state our results, let us define the minimal temporal
distance of a shifted node $\widehat t_{i,0}\in \widehat{\mathcal{N}}_{h,0}$
from the left boundary to the (original) nodes on the right boundary
\begin{equation*}
d_{i,0}:=\min_{t_{\ell,L} \in\mathcal{N}_{h,L}} |\widehat t_{i,0}- t_{\ell,L}|.
\end{equation*}
Analogously, for $\widehat t_{i,L} \in \widehat{\mathcal{N}}_{h,L}$, we consider
\begin{equation*}
d_{i,L}:=\min_{t_{\ell,0} \in\mathcal{N}_{h,0}} |\widehat t_{i,L} -t_{\ell,0}|,
\end{equation*}
and also introduce the quantities
\begin{equation*}
\eta_{h,\ast}:= \max_{i=1,\ldots,\widehat N_\ast-1} \frac{d_{i,\ast}}{\min\{
\whh_{i,\ast},\whh_{i+1,\ast}\}},\quad \ast \in\{0,L\},
\end{equation*}
as well as 
\begin{equation*}
  \eta_h := \max\{\eta_{h,0},\eta_{h,L}\}. 
\end{equation*}
Then we have the following result. 
\begin{theorem}
\label{thm:EstimatesWeakerMesh}
Let Assumption \ref{ass:mesh weaker} hold true. Then
\begin{equation}\label{eq:mesh and n dependent S_h bound}
\norm{\OS_h^n}_{L^2(\Sigma) \to L^2(\Sigma)} \leq 4 M_m n \eta_h.
\end{equation}
In particular, if there exists $\delta\in [0,1)$ independent of $h$ and $n$
such that
\begin{equation}\label{eq:assumption on eta_h}
  \eta_h \leq \frac{\delta}{4M_m n},
\end{equation}
the discrete inf-sup constant satisfies
\begin{equation}\label{eq:imroved asymptotic inf-sup}
  \widetilde{ \alpha}_{\text{is}}^{\OV} \geq \frac{1-\delta}{2n}.
\end{equation}
\end{theorem}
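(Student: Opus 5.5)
The plan is to start from the representation \eqref{eq:S_h representation in S} with $k=n$. Since $\OS^nZ_h=0$ and $\norm{\OS}_{L^2(\Sigma)\to L^2(\Sigma)}\le 1$, it gives
\begin{equation*}
\norm{\OS_h^n Z_h}_{L^2(\Sigma)} \le \sum_{j=0}^{n-1}\norm{(\OI-\OQ_h^1)\OS\OS_h^jZ_h}_{L^2(\Sigma)}.
\end{equation*}
Each iterate $W_j:=\OS_h^jZ_h$ lies in $S^1_{0,}(\Sigma_h)$ and satisfies $\norm{W_j}_{L^2(\Sigma)}\le\norm{Z_h}_{L^2(\Sigma)}$. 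The bound \eqref{eq:mesh and n dependent S_h bound} therefore follows once we prove the single-step estimate
\begin{equation*}
\norm{(\OI-\OQ_h^1)\OS W_h}_{L^2(\Sigma)}\le 4M_m\eta_h\norm{W_h}_{L^2(\Sigma)}\qquad\forall W_h\in S^1_{0,}(\Sigma_h).
\end{equation*}
Summing this over $j=0,\dots,n-1$ produces the factor $n$.

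To prove the single-step estimate, we use the best-approximation property of the $L^2$-projection: it suffices to construct some $\Pi_h\OS W_h\in S^1_{0,}(\Sigma_h)$ with the claimed error. Consider the $x=0$ component; the $x=L$ component is symmetric. The function $(\OS W_h)(0,\cdot)=-W_h(L,\cdot-L)$ is continuous and piecewise linear with breakpoints at the shifted nodes $\widehat{\mathcal N}_{h,L}$, and it vanishes on $(0,L)$. Let $\Pi_h$ be the nodal interpolant at the nodes $\mathcal N_{h,0}$, which preserves the zero initial value. On an element $(t_{\ell-1,0},t_{\ell,0})$ containing no shifted node the interpolation error is zero. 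For an element containing the shifted nodes $\widehat t_{i,L}$, $i\in J_{\ell,L}$, the error is a sum of at most $M_m$ hat-type kinks. The kink at $\widehat t_{i,L}$ has amplitude bounded by the jump of the derivative times $d_{i,L}$, and is supported on an interval of length at most $h_{\ell,0}$. The derivative jump of $\OS W_h$ at $\widehat t_{i,L}$ is bounded by inverse estimates on the two adjacent shifted elements: $|W'|\lesssim \whh^{-3/2}\norm{W}_{L^2}$ on an element of length $\whh$.

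Combining these ingredients, the local error is controlled by
\begin{equation*}
h_{\ell,0}^{1/2}\, d_{i,L}\, \min\{\whh_{i,L},\whh_{i+1,L}\}^{-3/2}\,\norm{W_h}_{L^2(\text{adjacent shifted elements})}.
\end{equation*}
Using $h_{\ell,0}\le M_m\min\{\whh_{i,L},\whh_{i+1,L}\}$, the definition of $\eta_h$, and the fact that each shifted element is adjacent to at most two shifted nodes, summing the squares then yields $4M_m\eta_h$. Tracking the constants, including the factor $\sqrt{M_m}$ from overlapping kinks within one element and the factor $\sqrt{2}$ from neighbouring elements, is the main obstacle. If the crude constants exceed $4$, I would refine the estimate of the kink error by computing the $L^2$ norm of a hat function of height $d\,|[W']|$ exactly.

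Finally, under \eqref{eq:assumption on eta_h} we get $\norm{\OS_h^n}_{L^2(\Sigma)\to L^2(\Sigma)}\le\delta$. Lemma~\ref{lem:asymptotic bound an alpha_is using norm S_h^n} then gives $\widetilde\alpha^{\OV}_{\text{is}}\ge(1-\delta)/(2n)$, which is \eqref{eq:imroved asymptotic inf-sup}.
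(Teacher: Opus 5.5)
Your proposal is correct and follows essentially the paper's route. You use \eqref{eq:S_h representation in S} with $\OS^nZ_h=0$, then best approximation by the nodal interpolant on each boundary component. The single-step bound comes from the kink decomposition of the shifted piecewise linear function, with per-kink errors controlled via $d_i$, $h_\ell\le M_m\min\{\whh_i,\whh_{i+1}\}$ and the inverse inequality; the paper packages this step as Lemma~\ref{lem:shifter meshes interpolation error estimate}. You finish, as the paper does, with Lemma~\ref{lem:asymptotic bound an alpha_is using norm S_h^n}.

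The constant does come out as exactly $4$, provided you use the exact hat norm $\tfrac13 h_\ell d_i^2[W']_i^2$, since a sup-norm bound would lose a factor $\sqrt3$; so the refinement you mention is needed. Together with $[W']_i^2\le 2(\cdot)+2(\cdot)$, the sharp inverse constant $12$, the Cauchy--Schwarz factor $M_m$ and the double counting of shifted elements, this gives $\tfrac{M_m^2\eta_h^2}{3}\cdot 48=16M_m^2\eta_h^2$.
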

\proof{
Let $Z_h\in S_{0,}^1(\Sigma_h)$ be arbitrary but fixed. The representation
\eqref{eq:S_h representation in S} together with $\norm{\OS}_{L^2(\Sigma)\to
L^2(\Sigma)}\leq 1$ gives
\begin{equation*}
\norm{\OS_h^n Z_h}_{L^2(\Sigma)}
=\norm{\sum_{j=0}^{n-1} \OS^{n-j-1} (\OI-\OQ_h^1)\OS \OS_h^j Z_h}_{L^2(\Sigma)}
\leq\sum_{j=0}^{n-1} \norm{(\OI-\OQ_h^1)\OS \OS_h^j Z_h}_{L^2(\Sigma)}.
\end{equation*}
Now for every $j=0,\ldots,n-1$ the function $V_{\whh} = \OS \OS_h^j Z_h$
is a piecewise affine linear function on the shifted mesh with nodes $\widehat{
\mathcal{N}}_h$. Using that the $L^2$-projection is the best approximation, the
nodal interpolation error estimate from
Lemma~\ref{lem:shifter meshes interpolation error estimate}, and $\norm{V_{\whh
}}_{L^2(\Sigma)}\leq \norm{Z_h}_{L^2(\Sigma)}$, we further get
\begin{equation*}
\norm{(\OI-\OQ_h^1)V_{\whh}}_{L^2(\Sigma)}
\leq \norm{(\OI-\mathcal{I}_h^1)V_{\whh}}_{L^2(\Sigma)}
\leq 4 M_m \eta_h \norm{V_{\whh}}_{L^2(\Sigma)}
\leq 4 M_m \eta_h \norm{Z_h}_{L^2(\Sigma)},
\end{equation*}
where $\mathcal{I}_h^1:\mathcal{C}([0,T])\to S_{h}^1(0,T)$ denotes the nodal
interpolation. This, after summation, gives
\eqref{eq:mesh and n dependent S_h bound}. In addition, if
\eqref{eq:assumption on eta_h} holds, using
Lemma~\ref{lem:asymptotic bound an alpha_is using norm S_h^n} gives
\eqref{eq:imroved asymptotic inf-sup}.
}

\begin{remark}
Condition~\eqref{eq:assumption on eta_h} is a relative mesh-alignment
condition. More precisely, every node of a mesh shifted by $L$ must satisfy
\[
  d_{i,\ast}
  \leq \frac{\delta}{4M_m n} \min\{\whh_{i,\ast},\whh_{i+1,\ast}\},
  \qquad \ast\in\{0,L\},
\]
i.e., the distance of a shifted node to the nearest node of the original mesh
on the opposite boundary must be at most of order $n^{-1}$ relative to the
local mesh size. Assumption~\ref{Assumption:mesh} corresponds to the limiting
case of exact alignment, for which $\eta_h=0$, but exact coincidence of the
shifted and target nodes is not required.
\end{remark}

We conclude by noticing that numerical experiments indicate that condition
\eqref{eq:assumption on eta_h} is sufficient but not necessary. Indeed, we refer
to Figure~\ref{fig:NumericalResults_BAmesh} for an example in which we have a
mesh that clearly violates condition~\eqref{eq:assumption on eta_h} yet still
leads to an inf-sup constant that behaves like $n^{-1}$.

\begin{figure}[H]
\centering
\begin{subfigure}{0.45\textwidth}
\begin{tikzpicture}

\shade[left color=white,right color=lightgray] (0,0) rectangle (3,3) node[pos=.5] {$T_1$};
\shade[left color=lightgray,right color=white] (0,3) rectangle (3,6) node[pos=.5] {$T_2$};

\draw[black] (0,0)--(0,6);
\draw[black] (3,0)--(3,6);

\def\leftTimes{ 0.00, 0.75, 0.7734375, 0.796875, 0.84375, 0.9375, 1.125,
 1.5, 4.5, 4.875, 5.0625, 5.15625, 5.203125, 5.2265625, 5.25, 6.0}
\def\rightTimes{0.00, 0.75, 0.7734375, 0.796875, 0.84375, 0.9375, 1.125,
 1.5, 4.5, 4.875, 5.0625, 5.15625, 5.203125, 5.2265625, 5.25, 6.0}

\foreach \t in \leftTimes {
    \draw (0,\t) node[circle, fill=black, scale=0.1, draw=black] {};
}

\foreach \t in \rightTimes {
    \draw (3,\t) node[circle, fill=black, scale=0.1, draw=black] {};
}

\draw[dotted] (-1.5,0)  -- (0,0) node[pos=-0.5] {$t=0$};
\draw[dotted] (-1.5,3)  -- (0,3) node[pos=-0.5] {$t=L$};
\draw[dotted] (-1.5,6)  -- (0,6) node[pos=-0.5] {$t=2L$};

\draw[dotted] (0,-1)  -- (0,0) node[pos=-0.3] {$\Sigma_h^0$};
\draw[dotted] (3,-1)  -- (3,0) node[pos=-0.3] {$\Sigma_h^L$};

\end{tikzpicture}
\caption{Example of badly aligned mesh for\\ $T=2L$, and $L=3$.}
\label{fig:badly_aligned_mesh}
\end{subfigure} \hfill
\begin{subfigure}{0.54\textwidth}
 \pgfplotstableread{Data/inf_sup_1stkind_dualH_Tdiff_BAmesh.txt}\infsupBA


{\scalefont{1.5}
\begin{tikzpicture}
\begin{loglogaxis}[
    log basis y ={2},
    log basis x = {2},
    width=14cm,
    height=14cm,
    xlabel={$n$},
    ylabel={inf-sup constant},
    xmin=1, xmax=10,
    ytick pos= left,
    xtick pos= bottom,
    xtick={1,2,3,4,5,6,7,8,9,10},
    xticklabels= {1,2,3,4,5,6,7,8,9,10},
    ytick = {0.5,0.25,0.125, 0.0625},
    legend pos=north east,
    legend cell align={left},
    legend style ={font = \large, inner sep= 6pt, outer sep = -6pt},
    ymajorgrids=true,
    grid style=dashed,
]


\addplot [
    domain= 1:10, 
    samples=10, 
    color=black,
    dashed,
    line width=2pt
]
{sin(deg(pi/(4*x+2)))};
\addlegendentry{$\alpha_{\text{is}}^{\OV}(n)$}




\addplot[orange,mark= square*,line width=2pt, mark options={solid}] table [x expr=\coordindex+1, y index = 0] {\infsupBA};
\addlegendentry{Inf-sup constant}

\end{loglogaxis}
\end{tikzpicture}
}%
\caption{Inf-sup constant for different final times $T=nL$ and a family of
meshes like the one in Figure~\ref{fig:badly_aligned_mesh}.}
\label{fig:infsup_BAmesh}
\end{subfigure}
\caption{Numerical results for \eqref{eq:Vpwc_pwl} over meshes that violate
condition~\eqref{eq:assumption on eta_h}.}
\label{fig:NumericalResults_BAmesh}
\end{figure}

\paragraph{Declaration of AI use.} OpenAI’s ChatGPT 5.6-6.1 Sol was used during
the preparation of this manuscript to assist with computations and final
polishing (checking mathematical arguments, language revision, and typesetting).
The authors independently verified the mathematical content and take full
responsibility for the manuscript.



\appendix

\section{Auxiliary results for non-matching meshes}
In this section we derive technical interpolation estimates needed for
Theorem~\ref{thm:EstimatesWeakerMesh}, together with all the required building
blocks.

Let $0=t_0 < t_1 <\ldots < t_N = T$ be a decomposition of the interval $(0,T)$.
Moreover, let $S_h^1(0,T)$ be the space of piecewise linear, globally continuous
functions, defined on this decomposition.
\begin{lemma}\label{lem:representation of pw linears}
For $Z_h \in S_h^1(0,T)$ it holds that
\begin{equation*}
Z_h(t) = Z_h(0) + Z_h'(t_{1/2})t + \sum_{i=1}^{N-1} [Z_h']_i \max\{t-t_i,0\},
\end{equation*}
where $t_{i+1/2} = \tfrac{t_i+t_{i+1}}{2}$, $i=0,\ldots,N-1$, and
\begin{equation*}
  [Z_h']_i := Z_h'(t_{i+1/2}) - Z_h'(t_{i-1/2}), \quad i=1,\ldots,N-1.
\end{equation*}
\end{lemma}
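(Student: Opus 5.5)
Define the right-hand side
\[
R(t) := Z_h(0) + Z_h'(t_{1/2})\,t + \sum_{i=1}^{N-1} [Z_h']_i \max\{t-t_i,0\},
\]
which is itself a continuous, piecewise linear function on the same decomposition: each ramp function $\max\{t-t_i,0\}$ is affine on every element $[t_{k-1},t_k]$. The plan is to show that $Z_h$ and $R$ have the same value at $t=0$ and the same derivative on every element $(t_{k-1},t_k)$. Since both functions are continuous and piecewise affine on the same partition, this forces $R\equiv Z_h$ on $[0,T]$.

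First, at $t=0$ all ramps vanish because $t_i>0$, and the linear term vanishes too, so $R(0)=Z_h(0)$.

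Next, on the element $(t_{k-1},t_k)$ the ramp $\max\{t-t_i,0\}$ has derivative $1$ if $i\le k-1$ and $0$ if $i\ge k$. Hence
\[
R'(t) = Z_h'(t_{1/2}) + \sum_{i=1}^{k-1}\bigl(Z_h'(t_{i+1/2})-Z_h'(t_{i-1/2})\bigr).
\]
The sum telescopes, giving $R'(t)=Z_h'(t_{k-1/2})$. Because $Z_h$ is affine on $(t_{k-1},t_k)$, this midpoint value is exactly its constant slope there, so $R'=Z_h'$ on each element.

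Finally, combine the two facts: for $t\in[t_{k-1},t_k]$ we have $R(t)-Z_h(t)=\int_0^t (R-Z_h)'(s)\,ds + (R-Z_h)(0)=0$. This is valid since both functions are continuous and piecewise $C^1$, hence absolutely continuous. There is no real obstacle; the only point needing care is the index bookkeeping in the telescoping sum, namely that only ramps with $t_i\le t_{k-1}$ are active on the $k$-th element.
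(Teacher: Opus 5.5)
Your proposal is correct and follows essentially the same route as the paper: define the right-hand side, check that it is piecewise affine on the same partition, that it agrees with $Z_h$ at $t=0$, and that its slope on each element telescopes to $Z_h'(t_{k-1/2})$, which forces equality. Your remark that only ramps with $t_i\le t_{k-1}$ are active on the $k$-th element is the same index bookkeeping the paper uses.
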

\proof{
We define
\begin{equation*}
  v_h(t) = Z_h(0) + Z_h'(t_{1/2})t + \sum_{i=1}^{N-1} [Z_h']_i \max\{t-t_i,0\}.
\end{equation*}
Indeed, $v_h \in S_h^1(0,T)$, as it is affine linear on each element $(t_{i-1},
t_i).$ Now we show that actually $v_h = Z_h.$ First note that
\begin{align*}
  v_h(0) = Z_h(0) + \sum_{i=1}^{N-1}[Z_h']_i \max\{-t_i,0\} = Z_h(0),
\end{align*}
because $\max\{-t_i,0\} =0$ for all $i=1,\ldots,N$. Then, let $\mathbf{1}_{X}(t
)$ denote the characteristic function over $X\subset \R$.
Using that $\tfrac{d}{dt}\max\{t-t_i,0\} = \mathbf{1}_{t> t_i}(t)$,
we get for the derivative evaluated at $t\in (t_{j-1},t_j)$ that
\begin{align*}
v_h'(t) &= Z_h'(t_{1/2}) + \sum_{i=1}^{N-1} [Z_h']_i\mathbf{1}_{t>t_i}(t)\\
&= Z_h'(t_{1/2}) + \sum_{i=1}^{j-1}(Z_h'(t_{i+1/2})- Z_h'(t_{i-1/2}))
= Z_h'(t_{j-1/2}).
\end{align*}
Since $v_h$ is affine linear, we have that $v_h'(t)=v_h'(t_{j-1/2}) = Z_h'(t_{
j-1/2})$ for $t\in (t_{j-1},t_j)$, and thus we established the relations
$v_h(0)=Z_h(0)$ and $v_h' = Z_h'$ giving the desired equality.
}
\begin{lemma}\label{lem:error estimate max - I_h max}
Let $\mathcal{I}_h^1:\mathcal{C}([0,T])\to S_{h}^1(0,T)$ denote the nodal
interpolation of continuous functions. For any fixed $\widehat t\in (0,T)$, let
$m_{\widehat t}(t) := \max\{t-\widehat t,0\}$. Then, it holds that
\begin{equation}\label{eq:error estimate max - I_h max}
\norm{m_{\widehat t} - \mathcal{I}_h^1 m_{\widehat t}}_{L^2(0,T)}^2
= \begin{cases}
0, & \widehat t= t_i,\\
\frac{(\widehat t-t_{i-1})^2(t_i-\widehat t)^2}{3(t_i-t_{i-1})}, & \widehat
t\in (t_{i-1},t_i).
\end{cases}
\end{equation}
\end{lemma}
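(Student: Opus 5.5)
The plan is a direct computation that exploits locality. The nodal interpolant $\mathcal{I}_h^1 m_{\widehat t}$ agrees with $m_{\widehat t}$ at every node $t_j$, and $m_{\widehat t}$ is affine on each subinterval that does not contain $\widehat t$ in its interior. On each such element the interpolant reproduces $m_{\widehat t}$ exactly, so the error vanishes there.

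If $\widehat t = t_i$ is a node, then $m_{\widehat t}$ is affine on every element, since its kink sits at a node. Hence $m_{\widehat t}\in S_h^1(0,T)$, and the interpolation error is identically zero. This gives the first case.

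If $\widehat t\in(t_{i-1},t_i)$, the error is supported only on $[t_{i-1},t_i]$. There $\mathcal{I}_h^1 m_{\widehat t}$ is the line through $(t_{i-1},0)$ and $(t_i,t_i-\widehat t)$. The error $e := \mathcal{I}_h^1 m_{\widehat t}-m_{\widehat t}$ is then a hat-shaped function: it is piecewise linear, vanishes at $t_{i-1}$ and $t_i$, and has its peak at $\widehat t$. Writing $a=\widehat t-t_{i-1}$, $b=t_i-\widehat t$ and $h=a+b$, the peak value is
\[
e(\widehat t) = \frac{a\,b}{h},
\]
which is the interpolant's value at $\widehat t$, because $m_{\widehat t}(\widehat t)=0$.

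For a function that is linear from $0$ to a peak value $p$ over a length $\ell$, we have $\int_0^\ell (p s/\ell)^2\,ds = p^2\ell/3$. Summing the two linear pieces gives
\[
\norm{e}_{L^2(t_{i-1},t_i)}^2 = \frac{p^2(a+b)}{3} = \frac{a^2b^2}{h^2}\cdot\frac{h}{3} = \frac{a^2b^2}{3h}.
\]
This is exactly the claimed formula.

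There is no real obstacle. The only care needed is to justify that the error vanishes outside the element containing $\widehat t$, which follows because the interpolation of an affine function on an element is exact.
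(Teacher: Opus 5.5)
Your proposal is correct and follows essentially the same route as the paper: the error is localized to the element containing $\widehat t$ (exactness of interpolation for affine pieces), the interpolant there is the line through $(t_{i-1},0)$ and $(t_i,t_i-\widehat t)$, and the two linear pieces are integrated directly. The only cosmetic difference is that you compute the integral via the hat shape with peak value $ab/h$ instead of integrating the explicit piecewise formulas, and this gives the same result.
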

\proof{
Recall that $m_{\widehat t}(t) = \max\{t-\widehat t,0\}$ is a piecewise linear
globally continuous function. Hence, if $\widehat t = t_i$ for some $i=0,\ldots
,N$ then $m_{\widehat t} = \mathcal{I}_h^1 m_{\widehat t}$ and thus
\begin{equation*}
  \norm{m_{\widehat t} - \mathcal{I}_h^1 m_{\widehat t}}_{L^2(0,T)} =0.
\end{equation*}
So, let us assume that $\widehat t \in (t_{i-1},t_i)$, for $i=1,\ldots,N.$ Since
\begin{equation*}
m_{\widehat t}(t)
= \begin{cases}
  0, & t<t_{i-1},\\
  t-\widehat t, & t> t_i,
\end{cases}
\end{equation*}
we note that $ m_{\widehat t}(t)$ is interpolated exactly outside the interval
$(t_{i-1},t_i)$ and therefore $m_{\widehat t}(t) = (\mathcal{I}_h^1 m_{\widehat
t})(t)$ for $t\in (0,t_{i-1})\cup (t_i,T)$. In particular,
\begin{equation}\label{eq:suppoart of I-I_h^1 m_t}
  \text{supp}(m_{\widehat t}-\mathcal{I}_h^1 m_{\widehat t}) \subset [t_{i-1},
  t_i], \quad \text{if }\widehat t\in (t_{i-1},t_i).
\end{equation}
Moreover, we have that $m_{\widehat t}(t_{i-1}) = 0$ and $m_{\widehat t}(t_i) =
t_i-\widehat t$, and thus the interpolation is given by
\begin{equation*}
  (\mathcal{I}_h^1 m_{\widehat t})(t)
  = \frac{t-t_{i-1}}{t_i-t_{i-1}}(t_i-\widehat t), \quad  t \in (t_{i-1},t_i).
\end{equation*}
From this we can compute
\begin{equation*}
m_{\widehat t}(t) - (\mathcal{I}_h^1 m_{\widehat t})(t)
= \begin{cases}
-\frac{t-t_{i-1}}{t_i-t_{i-1}}(t_i-\widehat t\,), &t\in (t_{i-1},\widehat t\,],\\
\frac{\widehat t-t_{i-1}}{t_i-t_{i-1}}(t-t_i), & t\in (\widehat t,t_i).
\end{cases}
\end{equation*}
Then, integrating gives
\begin{align*}
\norm{m_{\widehat t} -\mathcal{I}_h^1 m_{\widehat t}}_{L^2(0,T)}^2
&=\norm{m_{\widehat t} -\mathcal{I}_h^1 m_{\widehat t}}_{L^2(t_{i-1},t_i)}^2\\
&=\int_{t_{i-1}}^{\widehat t} \left[\frac{t-t_{i-1}}{t_i-t_{i-1}}(t_i-\widehat
t\,)\right]^2\, dt
+\int_{\widehat t}^{t_i} \left[\frac{\widehat t-t_{i-1}}{t_i-t_{i-1}}(t-t_i)
\right]^2\, dt \\
&=\frac{(\widehat t-t_{i-1})^2(t_i-\widehat t\,)^2}{3(t_i-t_{i-1})}.
\end{align*}
}

Now, we proceed to consider the definitions from Section~\ref{sssec:appComp},
but written for the current setting. Let $0=\widehat t_0<\widehat t_1<\cdots<
\widehat t_{\widehat N}=T$ be another partition of the interval $(0,T)$, with
mesh sizes $\whh_i:=\widehat t_i-\widehat t_{i-1},$ $i=1,\ldots,\widehat
N,$ and let $S_{\whh}^1(0,T)$ denote the space of piecewise linear,
globally continuous functions with respect to this decomposition.
For every element $(t_{\ell-1},t_\ell)$ of the original partition, define
\[
J_\ell:=
\left\{i\in\{1,\ldots,\widehat N-1\}:\widehat t_i\in(t_{\ell-1},t_\ell)\right\},
\qquad \ell=1,\ldots,N.
\]
Thus, $J_\ell$ contains the indices of all interior nodes of the second
partition that lie strictly inside the element $(t_{\ell-1},t_\ell)$. We
assume that there exists a constant $M_m\in\mathbb N$ such that
\begin{equation}\label{eq:assumption on Ji}
  |J_\ell|\leq M_m, \qquad \ell=1,\ldots,N,
\end{equation}
and, for $h_\ell := t_\ell-t_{\ell-1}$, $\ell=1,\ldots,N$, we assume that
\begin{equation}\label{eq:assumption on h_ell}
  h_\ell \leq M_m \min\{\whh_i,\whh_{i+1}\},
  \quad \text{ for all } i\in J_\ell.
\end{equation}
In other words, every element of the original partition contains at most $M_m$
interior nodes of the second partition. Moreover, define the distance of a node
$\widehat t_i$ to the original partition by $d_i := \min_{\ell=0,\ldots,N}|
\widehat t_i - t_\ell|,$ and the quantity $\eta_h := \max_{i=1,\ldots,\widehat
N-1} \frac{d_i}{\min\{\whh_i, \whh_{i+1}\}}$. Then we have the
following result.
\begin{lemma}\label{lem:shifter meshes interpolation error estimate}
For partitions such that Assumption~\eqref{eq:assumption on h_ell} holds,
we have that for any $V_{\whh} \in S_{\whh}^1(0,T)$, the following
bound is satisfied
\begin{equation*}
\norm{(I-\mathcal{I}_h^1)V_{\whh}}_{L^2(0,T)}
\leq 4 M_m \eta_h \norm{V_{\whh}}_{L^2(0,T)}.
\end{equation*}
\end{lemma}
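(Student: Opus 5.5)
The plan is to write $V_{\whh}$ as an affine part plus kinks at the nodes $\widehat t_i$, using Lemma~\ref{lem:representation of pw linears} on the partition $\{\widehat t_i\}$, and to bound the interpolation error kink by kink:
\begin{equation*}
V_{\whh}(t) = V_{\whh}(0) + V_{\whh}'(\widehat t_{1/2})\,t + \sum_{i=1}^{\widehat N-1}[V_{\whh}']_i\, m_{\widehat t_i}(t).
\end{equation*}
Since $\mathcal{I}_h^1$ reproduces affine functions, we get
\begin{equation*}
(I-\mathcal{I}_h^1)V_{\whh} = \sum_{i=1}^{\widehat N-1}[V_{\whh}']_i\,(m_{\widehat t_i}-\mathcal{I}_h^1 m_{\widehat t_i}).
\end{equation*}
By Lemma~\ref{lem:error estimate max - I_h max}, the term for index $i$ vanishes if $\widehat t_i$ is a node $t_\ell$. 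Otherwise it is supported in the unique element $(t_{\ell-1},t_\ell)$ containing $\widehat t_i$, i.e.\ $i\in J_\ell$. Hence the squared $L^2(0,T)$ norm splits into a sum over $\ell$ of the squared $L^2(t_{\ell-1},t_\ell)$ norms of $\sum_{i\in J_\ell}[V_{\whh}']_i(m_{\widehat t_i}-\mathcal{I}_h^1 m_{\widehat t_i})$.

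Next I bound each single term. For $\widehat t_i\in(t_{\ell-1},t_\ell)$, set $a=\widehat t_i-t_{\ell-1}$ and $b=t_\ell-\widehat t_i$. Lemma~\ref{lem:error estimate max - I_h max} gives the norm $ab/\sqrt{3h_\ell}$. Since $\min\{a,b\}= d_i$ (the nearest node is an endpoint of this element) and $\max\{a,b\}\le h_\ell$, this is at most $d_i\sqrt{h_\ell/3}$.

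For the jump I use an elementwise inverse estimate. If $V_{\whh}$ has endpoint values $\alpha,\beta$ on an element $\widehat\tau$ of length $\widehat h$, then
\begin{equation*}
\norm{V_{\whh}}^2_{L^2(\widehat\tau)}=\widehat h(\alpha^2+\alpha\beta+\beta^2)/3\ge \widehat h(\alpha-\beta)^2/12,
\end{equation*}
so $|V_{\whh}'|\le \sqrt{12}\,\widehat h^{-3/2}\norm{V_{\whh}}_{L^2(\widehat\tau)}$. Combining this with $d_i\le \eta_h\min\{\whh_i,\whh_{i+1}\}$ gives
\begin{equation*}
|[V_{\whh}']_i|\,d_i \le \sqrt{12}\,\eta_h\Big(\whh_i^{-1/2}\norm{V_{\whh}}_{L^2(\widehat\tau_i)}+\whh_{i+1}^{-1/2}\norm{V_{\whh}}_{L^2(\widehat\tau_{i+1})}\Big).
\end{equation*}
Multiplying by $\sqrt{h_\ell/3}$ and using \eqref{eq:assumption on h_ell} in the form $h_\ell/\whh_{i},\,h_\ell/\whh_{i+1}\le M_m$, each term is bounded by $2\sqrt{M_m}\,\eta_h\big(\norm{V_{\whh}}_{L^2(\widehat\tau_i)}+\norm{V_{\whh}}_{L^2(\widehat\tau_{i+1})}\big)$.

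Finally I sum. Within an element, \eqref{eq:assumption on Ji} allows at most $M_m$ terms, so the discrete Cauchy--Schwarz inequality gives
\begin{equation*}
\Big\|\sum_{i\in J_\ell}\cdots\Big\|^2\le 4M_m^2\eta_h^2\sum_{i\in J_\ell}\big(\norm{V_{\whh}}_{L^2(\widehat\tau_i)}+\norm{V_{\whh}}_{L^2(\widehat\tau_{i+1})}\big)^2\le 8M_m^2\eta_h^2\sum_{i\in J_\ell}\big(\norm{V_{\whh}}^2_{L^2(\widehat\tau_i)}+\norm{V_{\whh}}^2_{L^2(\widehat\tau_{i+1})}\big).
\end{equation*}
Summing over $\ell$, the sets $J_\ell$ are disjoint, so each element $\widehat\tau_k$ occurs at most twice, once as $\widehat\tau_i$ and once as $\widehat\tau_{i+1}$. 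This gives the total $16M_m^2\eta_h^2\norm{V_{\whh}}^2_{L^2(0,T)}$, i.e.\ the constant $4M_m$.

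The main obstacle is the bookkeeping of constants. The distance $d_i$ must be identified with $\min\{a,b\}$, and the inverse estimate and the mesh-ratio condition \eqref{eq:assumption on h_ell} must combine so that exactly $4M_m$ results. If the constants come out slightly larger, I would recheck the inverse-estimate constant $\sqrt{12}$ and the double-counting factor $2$.
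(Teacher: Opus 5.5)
Your proposal is correct and follows essentially the paper's proof: the kink representation from Lemma~\ref{lem:representation of pw linears}, the per-kink bound $d_i^2h_\ell/3$, Cauchy--Schwarz over $J_\ell$ with $|J_\ell|\le M_m$, the sharp inverse inequality with constant $12$, and the double-counting factor $2$ together give exactly $16M_m^2\eta_h^2$. The only difference is the order of steps: you apply the inverse estimate to each term before Cauchy--Schwarz, while the paper first collects $\sum_i \min\{\whh_i,\whh_{i+1}\}^3[V_{\whh}']_i^2$ and applies it afterwards, and both orderings give the same constant.
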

\proof{
By Lemma \ref{lem:representation of pw linears}, there holds the representation
\begin{equation*}
V_{\whh}(t) = V_{\whh}(0) + V_{\whh}'(0)t + \sum_{i=1}^{\hat{N} -1}
[V_{\whh}']_i m_{\widehat{t}_i}(t).
\end{equation*}
Since interpolation preserves linear functions, we get
\begin{equation*}
(\OI-\mathcal{I}_h^1) V_{\whh}(t)= \sum_{i=1}^{\widehat N -1}[V_{\whh}']_i
(\OI-\mathcal{I}_{h}^1)m_{\widehat t_i}(t).
\end{equation*}
Furthermore, for $\widehat t_i \in (t_{\ell-1},t_\ell)$ it holds that
$\text{supp}((\OI-\mathcal{I}_h^1)m_{\widehat t_i})\subset [t_{\ell-1},t_\ell]$,
see \eqref{eq:suppoart of I-I_h^1 m_t}, and therefore we derive
\begin{align}\label{eq:boundHappy}
\norm{(\OI-\mathcal{I}_h^1)V_{\whh}}_{L^2(0,T)}^2
&= \sum_{\ell=1}^{N} \norm{(\OI-\mathcal{I}_h^1)V_{\whh}}_{L^2(t_{\ell-1},
  t_\ell)}^2
=\sum_{\ell=1}^{N} \norm{\sum_{i=1}^{\widehat N-1}[V_{\whh}']_i(\OI-
\mathcal{I}_h^1)m_{\widehat t_i}}_{L^2(t_{\ell-1},t_\ell)}^2 \nonumber \\
&= \sum_{\ell=1}^{N} \norm{\sum_{i\in J_\ell}[V_{\whh}']_i(\OI-\mathcal{I}_h^1)
m_{\widehat t_i}}_{L^2(t_{\ell-1},t_\ell)}^2.
\end{align}
Moreover, the Cauchy--Schwarz inequality gives us that
\begin{align*}
\norm{\sum_{i\in J_\ell}[V_{\whh}']_i(\OI-\mathcal{I}_h^1)m_{\widehat t_i}}_{
L^2(t_{\ell-1},t_\ell)}^2  & \quad \leq
\int_{t_{\ell-1}}^{t_\ell} \left(\sum_{i\in J_\ell} 1\right) \left(\sum_{i\in
J_\ell} [V_{\whh}']_i^2 ((\OI-\mathcal{I}_h^1)m_{\widehat t_i})(t)^2\right)\,dt\\
&\quad =|J_\ell|\left(\sum_{i\in J_\ell} [V_{\whh}']_i^2 \norm{(\OI-
\mathcal{I}_h^1)m_{\widehat t_i}}_{L^2(t_{\ell-1},t_\ell)}^2\right).
\end{align*}
Now, by assumption \eqref{eq:assumption on Ji} and using the
error estimate \eqref{eq:error estimate max - I_h max} together with
\begin{equation*}
\frac{(\widehat t_i -t_{\ell-1})^2(t_\ell - \widehat t_i)^2}{3h_\ell}
= \frac{\min\{(\widehat t_i-t_{\ell-1})^2, (t_\ell-\widehat t_i)^2\}
\max\{(\widehat t_i-t_{\ell-1})^2, (t_\ell-\widehat t_i)^2\}}{3h_\ell}
\leq d_i^2 \frac{h_\ell}{3},
\end{equation*}
we can further estimate
\begin{align*}
|J_\ell|\left(\sum_{i\in J_\ell} [V_{\whh}']_i^2 \norm{(\OI-\mathcal{I}_h^1)
m_{\widehat t_i}}_{L^2(t_{\ell-1},t_\ell)}^2\right)
&\quad\leq M_m \sum_{i\in J_\ell} [V_{\whh}']_i^2\frac{(\widehat t_i -t_{\ell-1})^2
(t_\ell-\widehat t_i)^2}{3(t_\ell-t_{\ell-1})}\\
&\quad \leq M_m \sum_{i\in J_\ell} [V_{\whh}']_i^2 d_i^2\frac{h_\ell}{3}.
\end{align*}
Additionally, using \eqref{eq:assumption on h_ell}, we get
\begin{align*}
d_i^2 h_\ell & =\frac{d_i^2}{\min\{\whh_i,\whh_{i+1}\}^2} \min\{\whh_i,
\whh_{i+1}\}^2 h_\ell \leq M_m\eta_h^2 \min\{\whh_i,\whh_{i+1}\}^3,
\end{align*}
which, using that $J_\ell$ are pairwise disjoint, implies the following
intermediate estimate
\begin{align*}
\norm{(I-\mathcal{I}_h^1)V_{\whh}}_{L^2(0,T)}^2
&\leq \frac{M_m^2\eta_h^2}{3}\sum_{\ell=1}^{N}\sum_{i\in J_\ell} \min\{\whh_i,
\whh_{i+1}\}^3 [V_{\whh}']_i^2\\
&\leq \frac{M_m^2\eta_h^2}{3}\sum_{i=1}^{\widehat N-1} \min\{\whh_i,\whh_{i+1}
\}^3 [V_{\whh}']_i^2.
\end{align*}
Furthermore, we bound
\begin{equation*}
[V_{\whh}']_i^2
=(V_{\whh}'(\widehat t_{i+1/2}) - V_{\whh}'(\widehat t_{i-1/2}))^2
\leq 2\big(V_{\whh}'(\widehat t_{i+1/2})^2 + V_{\whh}'(\widehat t_{i-1/2}
)^2\big).
\end{equation*}
As $V_{\whh}$ is piecewise linear, $V_{\whh}'(t) = V_{\whh}'(
\widehat t_{i-1/2})$ for $t\in (\widehat t_{i-1},\widehat t_i)$, $i=1,\ldots,
\widehat N-1$. Thus,
\begin{align*}
\min\{\whh_i,\whh_{i+1}\}^3[V_{\whh}']_i^2
&\leq 2\min\{\whh_i,\whh_{i+1}\}^3(V_{\whh}'(\widehat t_{i-1/2})^2
+ V_{\whh}'(\widehat t_{i+1/2})^2)\\
&\leq 2\whh_i^3 V_{\whh}'(\widehat t_{i-1/2})^2 + 2\whh_{i+1}^3
V_{\whh}'(\widehat t_{i+1/2})^2\\
&= 2\left(\whh_i^2\int_{\widehat t_{i-1}}^{\widehat t_i}V_{\whh}'(t)^2\,dt
+\whh_{i+1}^2\int_{\widehat t_{i}}^{\widehat t_{i+1}}
V_{\whh}'(t)^2 \, dt \right).
\end{align*}
Now a sharp inverse inequality
\begin{equation*}
\whh_{i}^2\norm{V_{\whh}'}_{L^2(\widehat t_{i-1},\widehat t_i)}^2
\leq 12 \norm{V_{\whh}}_{L^2(\widehat t_{i-1},\widehat t_i)}^2,
\quad i=1,\ldots,\widehat N,
\end{equation*}
gives the estimate
\begin{align*}
\sum_{i=1}^{\widehat N-1}\min\{\whh_i,\whh_{i+1}\}^3[V_{\whh}']_i^2
&\leq 2\sum_{i=1}^{\widehat N-1} \big(\whh_i^2 \norm{V_{\whh}'}_{L^2(\widehat
t_{i-1},\widehat t_i)}^2 + \whh_{i+1}^2 \norm{V_{\whh}'}_{L^2(\widehat t_{i},
\widehat t_{i+1})}^2\big) \\
&\leq 4\sum_{i=1}^{\widehat N} \whh_i^2 \norm{V_{\whh}'}_{L^2(\widehat t_{i-1},
\widehat t_i)}^2
\leq 48 \norm{V_{\whh}}_{L^2(0,T)}^2.
\end{align*}
And we obtain the final estimate
\begin{equation*}
\norm{(I-\mathcal{I}_h^1)V_{\whh}}_{L^2(0,T)}^2
\leq 16 M_m^2 \eta_h^2\norm{V_{\whh}}_{L^2(0,T)}^2,
\end{equation*}
and taking the square root finishes the proof.
}

\end{document}


\pgfplotstableread{Data/inf_sup_1stkind_dualH_Tdiff_uniform_LS.txt}\infsupUni


{\scalefont{1.5}
\begin{tikzpicture}
\begin{loglogaxis}[
    log basis y ={2},
    log basis x = {2},
    width=14cm,
    height=14cm,
    xlabel={$n$},
    ylabel={inf-sup constant},
    xmin=1, xmax=10,
    ytick pos= left,
    xtick pos= bottom,
    xtick={1,2,3,4,5,6,7,8,9,10},
    xticklabels= {1,2,3,4,5,6,7,8,9,10},
    ytick = {0.5,0.25,0.125, 0.0625},
    legend pos=north east,
    legend cell align={left},
    legend style ={font = \large, inner sep= 6pt, outer sep = -6pt},
    ymajorgrids=true,
    grid style=dashed,
]


\addplot [
    domain= 1:10, 
    samples=10, 
    color=black,
    dashed,
    line width=2pt
]
{sin(deg(pi/(4*x+2)))};
\addlegendentry{$\alpha_{\text{is}}^{\OV}(n)$}

\addplot [
    domain= 1:10, 
    samples=10, 
    color=red,
    dotted,
    line width=2pt
]
{sqrt(3/4)*sin(deg(pi/(4*x+2)))};
\addlegendentry{$\sqrt{3/4}\alpha_{\text{is}}^{\OV}(n)$}



\addplot[orange,mark= square*,line width=2pt, mark options={solid}] table [x expr=\coordindex+1, y index = 0] {\infsupUni};
\addlegendentry{Inf-sup constant - Uniform}


\end{loglogaxis}
\end{tikzpicture}
}